\newtheorem{theorem}{Theorem}[section]
\newtheorem{lemma}[theorem]{Lemma}
\theoremstyle{definition}
\newtheorem{definition}[theorem]{Definition}
\newtheorem{corollary}[theorem]{Corollary}
\theoremstyle{remark}
\newtheorem{remark}[theorem]{Remark}
\theoremstyle{proposition}
\newtheorem{proposition}[theorem]{Proposition}
\numberwithin{equation}{section}
\title{Hartogs' phenomenon and the problem of complex spheres}
\author{ Zouaoui Mekri }
\address{Zouaoui MEKRI.  High school of computer science\\
22000 Sidi bel-abbes (Algeria)}
\email{z.mekri@esi-sba.dz}
\email{zouaouimekri@gmail.com}
\begin{document}

\maketitle
\begin{center}
    In memory of Pierre Dolbeault (1924-2015)
\end{center}

$ $\\\textbf{Abstract}\\In this paper, we solve in the negative the following open problem.\\
\textbf{\underline{Problem of complex spheres}.}$\phantom{r}$
\emph{Is there any integrable almost complex structure on the sphere $\mathcal{S}_{6}$?}
\tableofcontents
\begin{center}
    \textbf{Introduction}
\end{center}
$ $\\
The story of this long standing problem began more than sixty years ago when C. Ehresmann has introduced in differential geometry the
notion of almost complex structure on a differentiable manifold of even dimension (see \cite{5}, \cite{6}). The first result in the
direction of this problem was given by Borel and Serre \cite{1} and we know according to this result  that the only spheres of even dimension
which admit an almost complex structure are $\mathcal{S}_{2}$ and $\mathcal{S}_{6}$; therefore no spheres of even dimension except perhaps
$\mathcal{S}_{2}$ and $\mathcal{S}_{6}$ admit a complex structure. Since it is well known that $\mathcal{S}_{2}$ is a Riemann surface, that is
already a complex manifold of $dim_{\mathbb{C}}\mathcal{S}_{2} = 1$, it remained then to study only the case $\mathcal{S}_{6}$. It was Ehresmann
himself (see \cite{5}, \cite{16}) who showed for the first time that the sphere $\mathcal{S}_{6}$ admitted almost complex structures, and it was
Kirchoff (see \cite{13}, \cite{16}) who gave  the first example of an almost complex structure on $\mathcal{S}_{6}$. This example is the standard
almost complex structure $J : T\mathcal{S}_{6} \longrightarrow T\mathcal{S}_{6}$ defined by the cross product $\times$ on imaginary
parts $u , v \in \mathbb{R}^{7}$ of the Cayley's Octonions
$$
T_{u}(v) = u \times v  \phantom{rrrrr} \textrm{ for all } \phantom{r} v \in T_{u}S_{6} = u^{\perp} \subset \mathbb{R}^{7}
.$$ But it turned out that this famous almost complex structure is not integrable. The question was then: How to determine whether there exists or not
an integrable almost complex structure on the sphere $\mathcal{S}_{6}$? One great difficulty in solving this question is that the theorem of
 integrability of almost complex structures of Newlander Nirenberg \cite{17}, can not be easily implemented.
We mention however two kinds of fundamental partial results in the direction of the problem of the "complex" sphere $\mathcal{S}_{6}$.
The first result shows that, roughly speaking, although the sphere $\mathcal{S}_{6}$ is "simple" as real analytic manifold, an hypothetic
 complex structure makes it more "complicated" as complex analytic manifold. Indeed, we know that the Betti numbers $b_{j}$ of
 $\mathcal{S}_{6}$ with $ 1 \leq j \leq 5$ are null, however a surprising result due to A. gray \cite{8},(see also Ughart \cite{19}), shows
 that the Hodge number $h^{0,1}$ of an hypothetic complex structure on $\mathcal{S}_{6}$, is $\neq 0$, which means that not any $\overline{\partial}-$closed
  differential form of bidegree  $(0,1)$ in $\mathcal{S}_{6}$ is $\overline{\partial}-$exact; or in other words, the Cauchy-Riemann
  equation $\overline{\partial}u = f$ can not be always resoluble in the whole $\mathcal{S}_{6}$!
The second kind of results shows that some almost complex structures on $\mathcal{S}_{6}$ which satisfy some "additional conditions" can
also not be integrable, see \cite{2}, \cite{3}, \cite{15}. For instance, C. LeBrun has shown in \cite{15}, that there exist no integrable almost
complex structures $J$ on $\mathcal{S}_{6}$ orthogonal with respect to the Riemannian metric of $\mathcal{S}_{6}$.
 Such was the strategy adopted by almost all geometers interested in this problem!, and all attempts to solve the problem of the existence (or not)
 of a complex structure on the sphere $\mathcal{S}_{6}$, have been partially unsuccessful. The question remains then still open!
In this paper, we will not be occupied  by the integrability of any a priori almost complex structure on $\mathcal{S}_{6}$, since we will not have to
 use the result of Borel and Serre \cite{1}. Indeed, we propose an approach which shows that for $n \geq 2$, the obstruction for the
  spheres $\mathcal{S}_{2n}$  to admit a complex structure, is not due only to the fact that they do not admit almost complex structures,
   as is the case for the spheres other than $\mathcal{S}_{6}$, but that this phenomenon is due to Hartogs' theorem which is valid for all
    $n \geq 2$; and then, even if the the spheres $\mathcal{S}_{2n}$ with $n \geq 2$ other than $\mathcal{S}_{6}$ admitted almost complex
    structures, they do in any way, admit a complex structure. Although we adopt a cohomological approach, we don't search to apply the
     $\overline{\partial}-$cohomology of the whole sphere $\mathcal{S}_{6}$ as in \cite{8} and \cite{19}, because this turned out to be
     not conclusive. We will rather focus our attention on the study of the Dolbeault cohomology of a family of hypothetic open complex
      submanifolds $X_{\alpha,\beta}$ of $\mathcal{S}_{2n}$, called $\mathcal{A}-$submanifolds of $\mathcal{S}_{2n}$
      (see definition \ref{A-sous variete de S_2n}), and then we look for the problem of complex spheres from the point of view of Hartogs
      phenomenon.  Indeed, and surprisingly enough, it turns out that the problem of complex spheres on $\mathcal{S}_{2n}$, viewed
      under $\check{C}$hech cohomology, is a geometric phenomenon, deeply linked to the problem of analytic continuation
      of holomorphic functions  defined in a connected domain around a compact subset of $\mathbb{C}^{n}$ with $n \geq 2$, that
      is, to the so-called Hartogs phenomenon! \cite{10}, \cite{11}, \cite{18}. We prove in this paper two fundamental results
      concerning the so called $\mathcal{A}-$submanifolds of $\mathcal{S}_{2n}$ mentioned above: The first result shows that from
      the topological point of view, any $\mathcal{A}-$submanifolds of $\mathcal{S}_{2n}$ is homeomorphic to $\mathbb{C}^{n}$, and
      the second one shows that any $\mathcal{A}-$submanifolds of $\mathcal{S}_{2n}$ is strictly pseudoconvex, and then by the well
      known Cartan's theorem B, the Dolbeault  group of cohomology of bidegree $(0,1)$ of such manifold vanishes, that is
      $$\mathcal{H}^{0,1}\left( X_{\alpha,\beta}, \mathbb{C}^{n}\right) = \left\{ 0 \right\}.$$ Then equipped with this vanishing result,
       we show using $\check{C}$ech cohomology, that the assumption that $\mathcal{S}_{2n}$ with $n \geq 2$, admits a complex structure is
       in contradiction with Hartogs' theorem, which gives a negative solution to the problem of complex spheres for all spheres
       $\mathcal{S}_{2n}$ with $ n \geq 2$ and then for $\mathcal{S}_{6}$ as a particular case (\footnote{The approach proposed in this paper
       does not suppose the result of Borel Serre, and then we consider the problem of complex spheres for all spheres $\mathcal{S}_{2n}$ with
       $n \geq 2$ and not only for $\mathcal{S}_{6}$.}). The problem of complex spheres becomes then a geometric phenomenon induced by the
       analytic Hartogs' phenomenon, and we prove even that both phenomenons are in fact equivalent. It is rather curious that for more than a
       century, we have always viewed  Hartog's phenomenon only from its analytic aspect! We will see in this paper, that Hartogs' phenomenon and
       the problem of complex spheres represent two faces of the same phenomenon, which answers in the negative the not yet solved problem of
       complex spheres. The proof of such a link is the main motivation for this work.
\section{\textbf{Statement of results.}}$ $\\
\subsubsection{\underline{\textbf{Notations.}}}\label{Notations} It is useful before beginning our exposition to specify once for all some frequent notations.
\begin{enumerate}
  \item  $\mathcal{B}(a,r)$ denotes always an open ball of $\mathbb{C}^{n}$ centered at $a$, and of radius $r > 0$, and when we note
  simply $\mathcal{B}_{r}$ without specifying the center, this always means that the ball
is centered at the origin. The corresponding closed balls, will be noted respectively by $\overline{\mathcal{B}}(a,r)$,
 and $\overline{\mathcal{B}}_{r}$.
\item  $\mathcal{S}_{k}(a,r)$ denotes a sphere of dimension $k < 2n$ embedded in $\mathbb{C}^{n}$, of radius $r > 0$ and
centered at the point $a \in \mathbb{C}^{n}$, and when we note $\mathcal{S}_{k}$, this always means that the sphere
 embedded in $\mathbb{C}^{n}$, is viewed as a unit sphere centered at the origin.
  \item For two subsets $X $ and $Y$ of $\mathbb{C}^{n}$, we note
$$
Y - X  = \bigg\{ z \in Y, \phantom{rr}z\not\in X\cap Y\bigg\}.$$
\item If $X$ and $Y$ are topological spaces, the notation $X \approx Y$ means always that $X$ is homeomorphic to $Y$.
\item  We use the abbreviate notation "\emph{\textbf{psh}}" for plurisubharmonic functions.
\item If $X$ is a complex manifold and $f: X \longrightarrow\mathbb{C}$ is any piecewise $\mathcal{C}^{k}-$function, we note
then by $\widetilde{f}$
the expression of $f$ in a chart $(V,\psi)$, that is, in the complex coordinates $\zeta = \psi(z) \in \mathbb{C}^{n}$
 \begin{equation*}
                    \xymatrix { V \ar[d]_{\psi} \ar[r]^{f}& \mathbb{K}  \\
\psi\left( V\right) \ar[ur]_{\widetilde{f}} & }
                 \end{equation*}
\end{enumerate}
\subsection{Statement of the theorem of complex spheres}$ $\\
We begin by stating the following theorem.
\begin{theorem}\label{spheres}\textbf{(Theorem of complex spheres)}$ $\\
Among all spheres of even dimension $S_{2n}$, only the sphere $S_{2}$ admits a complex structure.
\end{theorem}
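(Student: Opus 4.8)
The plan is to argue by contradiction, isolating the only nontrivial case. Since $S_2$ is the Riemann sphere, it carries the standard complex structure, so the entire content of the theorem is the assertion that no $S_{2n}$ with $n\geq 2$ admits a complex structure. I therefore suppose, for contradiction, that for some fixed $n\geq 2$ the sphere $S_{2n}$ can be endowed with the structure of a compact complex manifold of complex dimension $n$, and I aim to manufacture a violation of Hartogs' extension theorem \cite{10}, \cite{18}. Note that this route deliberately avoids the result of Borel and Serre, treating all $S_{2n}$ with $n\geq 2$ uniformly.

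First I would bring in the two structural facts about the $\mathcal{A}$-submanifolds $X_{\alpha,\beta}$ announced above: each such $X_{\alpha,\beta}$ is homeomorphic to $\mathbb{C}^n$, and each is strictly pseudoconvex, so that Cartan's Theorem B applies and yields the vanishing $\mathcal{H}^{0,1}(X_{\alpha,\beta},\mathbb{C}^n)=\{0\}$. The geometric heart of the construction is then to select finitely many of these submanifolds, together with one distinguished coordinate ball $\mathcal{B}$ centered at a chosen point of $S_{2n}$, so that the overlaps of the resulting covering reproduce a genuine Hartogs configuration: a compact ``hole'' surrounded, inside a chart modelled on $\mathbb{C}^n$, by a connected shell on which holomorphic functions are forced to extend across the hole.

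Next I would set up the \v{C}ech-to-Dolbeault comparison for this covering. The vanishing of $\mathcal{H}^{0,1}$ on each piece, combined with the $\overline{\partial}$-Poincar\'e lemma, shows that every $\overline{\partial}$-closed form of bidegree $(0,1)$ is solvable on each member of the cover, so that the sole obstruction to global solvability is carried by the \v{C}ech group $H^1$ of the cover with coefficients in the structure sheaf. Exploiting this, I would produce on the shell a holomorphic function singular toward the hole --- the higher-dimensional analogue of a function with a pole at the centre --- representing a prescribed class, and then show that the vanishing on the pieces forces this class to be a \v{C}ech coboundary. Unwinding what such a coboundary means here is precisely the statement that the function extends holomorphically across the compact hole, i.e. Hartogs' phenomenon. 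But a function holomorphic on all of the compact manifold $S_{2n}$ must be constant by the maximum principle, and this contradicts the non-constancy of the function on the shell; the contradiction shows that $S_{2n}$ carries no complex structure for $n\geq 2$.

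I expect the main obstacle to be the geometric and sheaf-theoretic bookkeeping of the previous two paragraphs rather than any single hard estimate. One must arrange the $\mathcal{A}$-submanifolds so that their mutual intersections really do present a Hartogs shell around the hole, and, because the identification $X_{\alpha,\beta}\approx\mathbb{C}^n$ is only a homeomorphism and not a priori a biholomorphism, one must take care that the transport of the extension statement through these identifications respects the holomorphic structure. Verifying that strict pseudoconvexity (hence Theorem B) together with local solvability forces precisely Hartogs extension, and not some strictly weaker gluing, is where the real labour of the proof will lie.
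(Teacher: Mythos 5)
Your overall architecture does track the paper's: argue by contradiction for $n\geq 2$, invoke the two structural facts about the $\mathcal{A}$-submanifolds $X_{\alpha,\beta}$ (homeomorphic to $\mathbb{C}^{n}$ and strictly pseudoconvex, hence $\mathcal{H}^{0,1}\left(X_{\alpha,\beta},\mathbb{C}^{n}\right)=\left\{0\right\}$ by Cartan's Theorem B), pass to \v{C}ech cohomology of a suitable covering via Leray's theorem, and let Hartogs extension plus compactness of $\mathcal{S}_{2n}$ force constancy. The theorem itself is indeed deduced exactly as you frame it: the only content is $\left\{Necs\right\}_{n}$ for $n\geq 2$, obtained from the equivalence $\left\{Hath\right\}_{n}\Leftrightarrow\left\{Necs\right\}_{n}$ together with the truth of Hartogs' theorem.

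There is, however, a genuine gap at the decisive step. You propose to ``produce on the shell a holomorphic function singular toward the hole'' and then show that the cohomological vanishing forces it to extend. No such function exists: by your own setup the shell sits inside a coordinate chart, hence is biholomorphic to a shell in $\mathbb{C}^{n}$ with $n\geq 2$, and Hartogs' theorem --- the very tool you are wielding --- guarantees that every holomorphic function on it already extends across the hole. You therefore cannot manufacture the non-trivial datum your contradiction requires, and you never say where it would come from. The paper's resolution is to extract the datum from the hypothesis itself: the 1-cocycle is $f_{\alpha,\beta}=\Phi_{\beta}-\Phi_{\alpha}$, built from the chart homeomorphisms of the assumed atlas. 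Its splitting yields holomorphic maps $F$ on $X_{\alpha,\beta}-\overline{W_{\alpha}}$ and $G$ on $X_{\alpha,\beta}-\overline{W_{\beta}}$ which extend to $\mathcal{S}_{2n}$ and become constant, whence $\partial\Phi_{\alpha}=\partial\Phi_{\beta}$ on overlaps; these glue to a global holomorphic $(1,0)$-form $\omega$, and a \emph{second} application of the vanishing theorem (now for $\mathcal{H}^{1,0}$) produces a harmonic primitive $H=H_{1}+\overline{H_{2}}$ whose holomorphic pieces again extend and become constant, forcing $\partial\Phi_{\gamma}=0$ and contradicting that $\left(W_{\gamma},\Phi_{\gamma}\right)$ is a chart. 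Your sketch omits both the identification of the cocycle with the transition data and this second cohomological round; without them the argument does not close.
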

 One of the fundamental problems in several complex variables solved in the last century (period 1942-1954) was the so called Levi's problem which
 characterizers domains of holomorphy in $\mathbb{C}^{n}$. For the commodity of the reader, we recall here that a domain of holomorphy is roughly speaking
  a maximum domain $D \subseteq \mathbb{C}^{n}$ of definition of a holomorphic function $f: D \longrightarrow \mathbb{C}$. (By "maximum" we mean that
   $f$ can not be analytically extended across any part of the boundary $\partial D$). Two fundamental results in several complex analysis marked the
    beginning of the 20th century,  both proved in 1906. One of them (\footnote{The second result is the theorem proved by H. Poincar\'{e} which says
    that in $\mathbb{C}^{n}$ with $n \geq 2$, a ball is never biholomorphic to a polydisc.}) shows that not any domain in several complex variables
     is a domain of holomorphy. This result is the now classical Hartogs' theorem, See \cite{4}, \cite{7}, \cite{10}, \cite{12}, \cite{15}. We will
      prove that the theorem of complex spheres stated above is a corollary of the main theorem of this paper (theorem \ref{equiv}) combined with Hartogs'
      theorem, whose statement is as follows:
\begin{theorem}\label{hartogs}\textbf{(Hartogs' theorem)} $ $ \\
Let $n \geq 2$, and let $D  \subset \mathbb{C}^{n}$ be an open set, and let $\mathcal{K}$ be a compact subset of $D$, such that $D - \mathcal{K}$ is
connected. Then for every holomorphic mapping $f: D - \mathcal{K} \longrightarrow \mathbb{C}^{k},$ there exists a holomorphic mapping
$F: D \longrightarrow \mathbb{C}^{k}$ such that $F=f$ in $D - \mathcal{K}$.
\end{theorem}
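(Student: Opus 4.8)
The statement to prove is the classical Hartogs extension theorem (Hartogs' Kugelsatz). Let me sketch how I would prove it.

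The plan is to establish the theorem first for a single holomorphic function $f \colon D - \mathcal{K} \longrightarrow \mathbb{C}$, since the vector-valued case follows immediately by extending each component function $f_{1}, \dots, f_{k}$ separately. The standard and most transparent route is the $\overline{\partial}$-method originating with Ehrenpreis. First I would choose a cutoff function $\chi \in \mathcal{C}^{\infty}(D)$ with $\chi \equiv 1$ on a neighborhood of $\mathcal{K}$ and $\operatorname{supp}\chi$ a compact subset of $D$, and form the function $f_{0} = (1-\chi)f$, extended by zero across $\mathcal{K}$. Then $f_{0}$ is a smooth function on all of $D$ that agrees with $f$ outside $\operatorname{supp}\chi$, but it fails to be holomorphic precisely on the annular region where $\chi$ varies. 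The correction term is governed by the $(0,1)$-form
\begin{equation*}
g = \overline{\partial} f_{0} = -f\,\overline{\partial}\chi,
\end{equation*}
which is smooth, $\overline{\partial}$-closed, and has compact support contained in $D - \mathcal{K}$ (where $f$ is genuinely defined and holomorphic).

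Next I would solve the inhomogeneous Cauchy--Riemann equation $\overline{\partial} u = g$ with a compactly supported solution. Because $g$ has compact support and $n \geq 2$, one can solve $\overline{\partial} u = g$ by an explicit convolution against the Bochner--Martinelli kernel, or simply by solving the single equation $\partial u / \partial \overline{z}_{1} = g_{1}$ in the first variable via the Cauchy transform and then checking, using $\overline{\partial} g = 0$ together with holomorphic dependence on the remaining variables, that the resulting $u$ satisfies all the equations simultaneously. The crucial point is that the solution $u$ so constructed is holomorphic outside a compact set and vanishes on the unbounded component of the complement of $\operatorname{supp} g$; this is exactly where the hypothesis $n \geq 2$ is indispensable, since in one variable there is no room to force compact support of the solution. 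Setting $F = f_{0} - u$, we obtain $\overline{\partial} F = \overline{\partial} f_{0} - g = 0$, so $F$ is holomorphic on all of $D$.

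Finally I would verify that $F = f$ on $D - \mathcal{K}$. On the unbounded region where $\chi = 0$ and $u = 0$ we have $F = f_{0} = f$, so $F$ and $f$ are two holomorphic functions agreeing on a nonempty open subset of the connected open set $D - \mathcal{K}$; by the identity theorem they coincide throughout $D - \mathcal{K}$. The main obstacle is the middle step: producing a solution $u$ of $\overline{\partial}u = g$ that is \emph{compactly supported}, or at least holomorphic and vanishing on an unbounded component, rather than merely some solution. This compact-support phenomenon is the analytic heart of Hartogs' theorem and the precise place where the connectedness of $D - \mathcal{K}$ and the condition $n \geq 2$ enter; the cutoff construction and the identity-theorem conclusion are then routine.
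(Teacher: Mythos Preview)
Your proof is correct: this is precisely the Ehrenpreis $\overline{\partial}$-argument, and you have identified the key analytic step (producing a compactly supported solution of $\overline{\partial}u=g$, which requires $n\geq 2$) accurately.

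However, there is nothing to compare against: the paper does not supply its own proof of Hartogs' theorem. Theorem~\ref{hartogs} is stated as a classical result and attributed to the literature (references \cite{4}, \cite{7}, \cite{10}, \cite{12}, \cite{15}); the paper's contribution is the equivalence $\{Hath\}_n \Leftrightarrow \{Necs\}_n$ of Theorem~\ref{equiv}, which \emph{uses} Hartogs' theorem as an input rather than reproving it. In fact the Ehrenpreis paper you are essentially reproducing is item \cite{4} in the bibliography. So your write-up would serve perfectly well as the missing justification, but you should be aware that in the paper this theorem is treated as background, not as something to be established.
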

Before stating our main theorem (theorem \ref{equiv}), let us first consider the following two propositions:
\begin{proposition}\label{sphere sphere} \textbf{$\left\{Hath\right\}_{n}$} $ $ \\
Let $n \in \mathbb{N}^{\ast}$, and let $\mathcal{B}\subset \mathbb{C}^{n}$ be an open ball and let $\mathcal{K}$ be a compact subset of $\mathcal{B}$,
 such that $\mathcal{B}-\mathcal{K}$ is connected. Then for every holomorphic mapping $f: \mathcal{B} - \mathcal{K} \longrightarrow \mathbb{C}^{k},$
  there exists a holomorphic mapping $F: \mathcal{B} \longrightarrow \mathbb{C}^{k}$ such that $F=f$ in $\mathcal{B} - \mathcal{K}$.
\end{proposition}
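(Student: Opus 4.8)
The plan is to view this proposition as the Hartogs extension phenomenon localized to a ball. For $n \geq 2$ it is, strictly speaking, the special case $D = \mathcal{B}$ of Theorem \ref{hartogs}, so one could simply invoke that theorem; I nevertheless prefer to record the direct $\overline{\partial}$-theoretic argument, since it is self-contained and isolates precisely the step where the dimension hypothesis is indispensable. As the extension may be carried out coordinate by coordinate on the target, it suffices to extend a single holomorphic function $f : \mathcal{B} - \mathcal{K} \longrightarrow \mathbb{C}$.

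First I would fix a cut-off $\chi \in \mathcal{C}^{\infty}_{c}(\mathcal{B})$ with $\chi \equiv 1$ on a neighbourhood of $\mathcal{K}$, so that in particular $\chi$ vanishes near $\partial\mathcal{B}$. Setting $g_{0} := (1 - \chi) f$ on $\mathcal{B} - \mathcal{K}$ and $g_{0} := 0$ on the set where $\chi \equiv 1$ produces a function $g_{0} \in \mathcal{C}^{\infty}(\mathcal{B})$ that agrees with $f$ outside $\operatorname{supp}\chi$. Differentiating gives $\overline{\partial} g_{0} = -(\overline{\partial}\chi)\,f$, a smooth $(0,1)$-form supported in the compact set $\operatorname{supp}(\overline{\partial}\chi) \subset \mathcal{B} - \mathcal{K}$, on which $f$ is holomorphic; being $\overline{\partial}$-exact it is in particular $\overline{\partial}$-closed, and it is compactly supported in $\mathbb{C}^{n}$.

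The decisive step — and the one I expect to be the only genuine obstacle — is to solve $\overline{\partial} u = -(\overline{\partial}\chi)\,f$ with $u$ again \emph{compactly supported}. Here I would convolve the coefficients in the single variable $z_{1}$ against the Cauchy kernel $1/(\pi z_{1})$; this yields a smooth $u$ with $\overline{\partial} u = -(\overline{\partial}\chi)\,f$ (the off-diagonal equations following from the $\overline{\partial}$-closedness of the right-hand side). Because the right-hand side has compact support while $n \geq 2$ leaves the variables $z_{2},\dots,z_{n}$ free, $u$ vanishes identically whenever $(z_{2},\dots,z_{n})$ lies outside a compact set. Being holomorphic off the support of the right-hand side and vanishing on that open set, $u$ must vanish on the whole connected unbounded component of $\mathbb{C}^{n} \setminus \operatorname{supp}(\overline{\partial}\chi)$ — in particular on a neighbourhood of $\partial\mathcal{B}$ — so $u$ is compactly supported. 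This is exactly where $n \geq 2$ is used: in one variable there is no spare direction forcing $u$ to vanish at infinity, no compactly supported solution exists in general, and the phenomenon genuinely disappears.

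Finally I set $F := g_{0} - u$. Then $\overline{\partial} F = 0$, so $F$ is holomorphic on all of $\mathcal{B}$. On the annular neighbourhood of $\partial\mathcal{B}$ inside $\mathcal{B}$ we have $\chi = 0$ and $u = 0$, whence $F = g_{0} = f$ there; this is a nonempty open subset of $\mathcal{B} - \mathcal{K}$. Since $\mathcal{B} - \mathcal{K}$ is connected by hypothesis and $F$ and $f$ are both holomorphic on it, the identity theorem gives $F = f$ throughout $\mathcal{B} - \mathcal{K}$, which is the required extension.
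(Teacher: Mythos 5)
Your argument is correct for $n \geq 2$, but note that the paper does not actually prove this proposition at all: it is introduced as a named \emph{logical proposition}, explicitly declared to be the restriction to balls of the classical Hartogs theorem (Theorem \ref{hartogs}, itself quoted without proof from the literature, e.g. \cite{10}), and the remark following Theorem \ref{equiv} even records that $\{Hath\}_{1}$ is \emph{false} (the function $1/z$ on $\mathcal{B}_{1}-\{0\}$). So the statement as literally phrased, for all $n \in \mathbb{N}^{\ast}$, is not a theorem, and your proof rightly establishes only the case $n \geq 2$ — which is consistent with how the paper uses it. What you supply beyond the paper is a self-contained proof of that case by the Ehrenpreis $\overline{\partial}$-method (essentially reference \cite{4} of the paper): cut off $f$, solve $\overline{\partial}u = -(\overline{\partial}\chi)f$ with compactly supported $u$ via the one-variable Cauchy transform, use the spare variables to force $u$ to vanish on the unbounded component of the complement of the support, and conclude by the identity theorem on the connected set $\mathcal{B}-\mathcal{K}$. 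All the steps are sound, including the point where connectedness of $\mathcal{B}-\mathcal{K}$ is genuinely needed and the precise place where $n \geq 2$ enters; the only pedantic gap is that you should record explicitly that a collar of $\partial\mathcal{B}$ inside $\mathcal{B}$ lies in the unbounded component of $\mathbb{C}^{n}-\operatorname{supp}(\overline{\partial}\chi)$ (clear, since $\operatorname{supp}\chi$ is contained in a smaller closed ball whose complement is connected). The trade-off is transparency versus economy: the paper buys brevity by outsourcing the analysis to a citation, while your route makes the dimension hypothesis and the role of compact supports visible, which is exactly the mechanism the rest of the paper leans on.
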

and
\begin{proposition}\label{prop2}\textbf{$\left\{Necs\right\}_{n}$}$ $\\
Let $n \in \mathbb{N}^{\ast}$, then the sphere $S_{2n}$ does not admit a complex structure.
\end{proposition}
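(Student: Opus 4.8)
The plan is to prove $\{Necs\}_{n}$ by contradiction for $n \geq 2$, using Proposition \ref{sphere sphere} to transport the Hartogs phenomenon onto the hypothetical complex manifold and then colliding it with the maximum principle. Suppose that $\mathcal{S}_{2n}$ carries an integrable almost complex structure, so that it becomes a compact connected complex manifold $M$ of complex dimension $n$. The goal is to manufacture a single non-constant global holomorphic function on $M$: since $M$ is compact and connected, the only global holomorphic functions on it are the constants, so such a function cannot exist, and the assumption collapses.

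The vehicle for producing that function is the family of $\mathcal{A}$-submanifolds $X_{\alpha,\beta}$ (definition \ref{A-sous variete de S_2n}), together with the two structural facts advertised in the introduction. First I would record the purely topological statement that each $X_{\alpha,\beta}$ is homeomorphic to $\mathbb{C}^{n}$; this is immediate from the fact that deleting a point from $\mathcal{S}_{2n}$ leaves $\mathbb{R}^{2n} \approx \mathbb{C}^{n}$. Next, and crucially, I would establish that $X_{\alpha,\beta}$ is strictly pseudoconvex by exhibiting a smooth strictly plurisubharmonic exhaustion $\varphi : X_{\alpha,\beta} \longrightarrow \mathbb{R}$ adapted to the deleted points and checking, in the holomorphic charts, that its Levi form is positive definite. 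By Grauert's solution of the Levi problem this makes $X_{\alpha,\beta}$ a Stein manifold, so Cartan's Theorem B yields
\[
\mathcal{H}^{0,1}\left( X_{\alpha,\beta}, \mathbb{C}^{n}\right) = \left\{ 0 \right\},
\]
and, more to the point, furnishes non-constant holomorphic functions on $X_{\alpha,\beta}$ that separate points.

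With such a function in hand the contradiction is short. Take a non-constant holomorphic $f$ on $X_{\alpha,\beta} = M \setminus \{\beta\}$, choose a coordinate ball $\mathcal{B}$ around $\beta$ that is biholomorphic to a standard ball, and set $\mathcal{K} = \{\beta\}$. Because $n \geq 2$ the punctured ball $\mathcal{B} - \mathcal{K}$ is connected, so Proposition \ref{sphere sphere} ($\{Hath\}_{n}$) extends $f|_{\mathcal{B}-\mathcal{K}}$ to a holomorphic $F$ on all of $\mathcal{B}$ with $F = f$ on the overlap $\mathcal{B} \setminus \{\beta\}$. Gluing $f$ on $X_{\alpha,\beta}$ to $F$ on $\mathcal{B}$ produces a global holomorphic function on $M = X_{\alpha,\beta} \cup \mathcal{B}$ that restricts to the non-constant $f$, contradicting the constancy of global holomorphic functions on the compact connected $M$. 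Equivalently, one may package the same step through $\check{C}$ech cohomology with the two-set cover $\{X_{\alpha,\beta}, \mathcal{B}\}$ and the vanishing of $\mathcal{H}^{0,1}$, as indicated in the introduction.

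The step I expect to be the genuine obstacle is the strict pseudoconvexity of $X_{\alpha,\beta}$. The homeomorphism $X_{\alpha,\beta} \approx \mathbb{C}^{n}$ is only topological and carries no analytic content; deleting a point from a compact complex manifold can perfectly well leave a manifold on which every holomorphic function is constant, so pseudoconvexity cannot be read off from the topology alone. The whole argument therefore hinges on building a strictly plurisubharmonic exhaustion with respect to a complex structure that is, by hypothesis, entirely uncontrolled near $\alpha$ and $\beta$; making that exhaustion honest — and thereby promoting the topological picture to a genuine Stein, hence Hartogs-susceptible, configuration — is the decisive point on which the non-existence of a complex structure on $\mathcal{S}_{2n}$ ultimately rests.
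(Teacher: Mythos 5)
The decisive gap is exactly where you located it, but it is worse than ``hard'': in your formulation it is impossible. You take $X_{\alpha,\beta}$ to be the complement of a single point $\beta$, whereas in the paper $X_{\alpha,\beta}=\mathcal{S}_{2n}-\mathcal{K}$ with $\mathcal{K}=\partial W_{\alpha}\cap\partial W_{\beta}$ a compact set homeomorphic to $\mathcal{S}_{2n-2}$, contained in one distinguished chart; the indices $\alpha,\beta$ label charts of the refined atlas $\mathcal{F}$, not points. This is not cosmetic. For $n\geq 2$ the complement of a point in a complex $n$-manifold admits no plurisubharmonic exhaustion at all: in a coordinate ball, a psh function $u$ on $\mathcal{B}_{1}-\left\{0\right\}$ cannot tend to $+\infty$ at the puncture, because through any $z_{0}\neq 0$ one can choose a complex line missing the origin (possible since $n\geq 2$), restrict $u$ to the full disc it cuts out of $\overline{\mathcal{B}}_{\rho}$, and conclude from the maximum principle that $u(z_{0})\leq\sup_{\|z\|=\rho}u$. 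Hence $\mathcal{S}_{2n}-\left\{p\right\}$ is never strictly pseudoconvex and never Stein, and your plan to ``exhibit a strictly psh exhaustion adapted to the deleted points'' cannot be carried out. A sanity check that the scheme proves too much: if it worked, the same argument applied to $\mathbb{CP}^{n}$ minus a point would manufacture a non-constant holomorphic function on $\mathbb{CP}^{n}$. The paper's choice of $\mathcal{K}$ as a $(2n-2)$-sphere, a set of real codimension $2$ along which functions of the form $-\log(\cdot)$ of defining functions can blow up, is precisely what makes an exhaustion conceivable, and the whole machinery of Sections 2 and 3 (the flattening homeomorphisms, the ordered atlas, the functions $f_{k}$ and the regularizing operator $\mathcal{R}_{\mathfrak{A}}$) exists to build it. Your remark that $X_{\alpha,\beta}\approx\mathbb{C}^{n}$ is ``immediate from deleting a point'' inherits the same misreading; the paper needs the quotient-space argument of Proposition \ref{proposition fondamentale} because $\mathcal{K}$ is not a point.

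What you do after pseudoconvexity is granted is sound and in fact shorter than the paper's route: a Stein manifold of positive dimension carries non-constant holomorphic functions, $\mathcal{K}$ is a Hartogs-removable compact inside a coordinate ball with connected complement, so such a function extends to a non-constant global holomorphic function on the compact connected $\mathcal{S}_{2n}$, contradicting the maximum principle. The paper instead runs a \v{C}ech argument on the specific cocycle $\Phi_{\beta}-\Phi_{\alpha}$, produces a global $(1,0)$-form $\omega$ with $\omega/W_{\gamma}=\partial\Phi_{\gamma}$, and kills it by a second application of the vanishing theorem and of Hartogs; your endgame would bypass all of that. But as submitted the proof has a hole precisely at its load-bearing step, and the hole cannot be patched without first replacing the punctured-sphere model of $X_{\alpha,\beta}$ by the paper's $\mathcal{S}_{2n}-\mathcal{K}$ with $\mathcal{K}\approx\mathcal{S}_{2n-2}$, and then actually constructing the exhaustion rather than postulating it.
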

\begin{remark}$ $\\
\begin{enumerate}
  \item In this paper, we will refer to propositions \ref{sphere sphere}, and \ref{prop2} respectively, by the following abbreviated notations:
   \begin{enumerate}
     \item  $\{Hath\}_{n}$ which is short for \textbf{Ha}rtog's \textbf{th}eorem in $\mathbb{C}^{n}$,
   \item  $\{Necs\}_{n}$ which is short for \textbf{N}on \textbf{e}xistence of \textbf{c}omplex \textbf{s}pheres $\mathcal{S}_{2n}$.
   \end{enumerate}
  \item It is clear that for $n \geq 2$, proposition $\left\{Hath\right\}_{n}$ is the statement for balls of Hartogs theorem, and for our
  purpose, it will be enough sufficient to restrict the statement of Hartogs' theorem to balls.
      \end{enumerate}
\end{remark}
\subsection{The main theorem : statement and strategy of proof.}
\subsubsection{\textbf{Statement of the main theorem.}}$ $\\
The ultimatum goal of this paper is then to prove the following main theorem.
\begin{theorem}\label{equiv}$ $ For all $n \geq 1$, the following equivalence holds:
$$\left\{Hath\right\}_{n}  \Longleftrightarrow \left\{Necs\right\}_{n}.$$
\end{theorem}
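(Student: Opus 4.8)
The plan is to prove the two implications separately, both by contraposition, and to organise everything around a two--chart description of a hypothetical complex structure on $\mathcal{S}_{2n}$. Fix a point $N \in \mathcal{S}_{2n}$ and set $X_{\beta} = \mathcal{S}_{2n}\setminus\{N\}$; by the first fundamental result this is an $\mathcal{A}$--submanifold homeomorphic to $\mathbb{C}^{n}$ (see definition \ref{A-sous variete de S_2n}), and by the second it is strictly pseudoconvex, hence Stein, so that $\mathcal{H}^{0,1}\left(X_{\beta}\right)=\{0\}$ by Cartan's theorem B and $X_{\beta}$ carries an abundance of holomorphic functions. I would complete this to a cover of $\mathcal{S}_{2n}$ by adjoining a holomorphic coordinate ball $U$ around $N$, with chart $\psi : U \longrightarrow \mathcal{B}\subset\mathbb{C}^{n}$ and $\psi(N)=0$. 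The overlap $W = U\cap X_{\beta} = U\setminus\{N\}$ is then read through $\psi$ as the punctured ball $\mathcal{B}-\{0\}$, which is connected for every $n\geq 1$ and is exactly the configuration of $\{Hath\}_{n}$ with $\mathcal{K}=\{0\}$. This is the geometric dictionary through which the $\check{C}$ech cohomology of the cover $\{U,X_{\beta}\}$ gets compared with Hartogs' extension property.

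For the implication $\{Hath\}_{n}\Longrightarrow\{Necs\}_{n}$ I would argue by contradiction. Assuming $\{Hath\}_{n}$ and a complex structure on $\mathcal{S}_{2n}$, build $U,X_{\beta},W$ as above and choose a nonconstant holomorphic $g$ on $X_{\beta}$, available since $X_{\beta}$ is Stein. Its restriction $g|_{W}$, transported by $\psi$, is holomorphic on $\mathcal{B}-\{0\}$, so $\{Hath\}_{n}$ extends it across $0$, i.e.\ across $N$, to all of $U$. The two determinations agree on the connected overlap $W$, hence glue to a global holomorphic $G$ on the compact connected manifold $\mathcal{S}_{2n}$; thus $G$ is constant, contradicting that $G|_{X_{\beta}}=g$ is nonconstant. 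In cohomological language the same statement reads: $\{U,X_{\beta}\}$ is a Leray cover for $\mathcal{O}$ because both pieces are $\mathcal{H}^{0,1}$--acyclic, so by Mayer--Vietoris $\check{H}^{1}\left(\mathcal{S}_{2n},\mathcal{O}\right)$ is $\mathcal{O}(W)$ modulo the images of $\mathcal{O}(U)$ and $\mathcal{O}(X_{\beta})$, and Hartogs forces every class to split --- the very splitting that collapses the would--be nonconstant $g$ to a constant.

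For the converse $\{Necs\}_{n}\Longrightarrow\{Hath\}_{n}$ I would again use contraposition: assuming $\{Hath\}_{n}$ fails, I must manufacture a complex structure on $\mathcal{S}_{2n}$, contradicting $\{Necs\}_{n}$. A failure of $\{Hath\}_{n}$ supplies a ball $\mathcal{B}$, a compact $\mathcal{K}$ with $\mathcal{B}-\mathcal{K}$ connected, and a holomorphic map on the shell $\mathcal{B}-\mathcal{K}$ that does not extend across $\mathcal{K}$. Such non--extendable data is the higher dimensional analogue of the map $z\longmapsto 1/z$ that glues two copies of $\mathbb{C}$ into $\mathbb{P}^{1}=\mathcal{S}_{2}$: it is precisely the absence of extension that prevents the glued object from collapsing back onto one chart and instead compactifies it. The idea is therefore to fashion from the shell data a biholomorphic transition cocycle on $W$, glue two balls along it, and verify that the resulting compact complex manifold is diffeomorphic to $\mathcal{S}_{2n}$.

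The main obstacle is this converse direction. Upgrading a given non--extendable holomorphic map into $\mathbb{C}^{k}$ to a genuine non--extendable \emph{biholomorphic} transition, checking the cocycle condition, and certifying that the glued manifold is the sphere rather than some other compactification, is where the equivalence acquires its real content; the forward direction, by contrast, is the workhorse that --- combined with Hartogs' theorem \ref{hartogs} for $n\geq 2$ --- already yields the theorem of complex spheres. In the forward argument the one delicate hypothesis is the standing need for globally defined nonconstant holomorphic functions on $X_{\beta}$, which is exactly why the strict pseudoconvexity of the $\mathcal{A}$--submanifolds (hence Cartan's theorem B and the Stein property) is essential and not merely decorative: without it a cap could a priori support only constants and the extension--and--glue mechanism would stall.
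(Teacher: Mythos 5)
Your forward implication has a genuine gap at its engine. You take $X_{\beta}=\mathcal{S}_{2n}\setminus\{N\}$ and invoke the paper's two fundamental results to conclude it is Stein and hence carries a nonconstant holomorphic function. But $\mathcal{S}_{2n}\setminus\{N\}$ is \emph{not} an $\mathcal{A}$-submanifold in the sense of definition \ref{A-sous variete de S_2n}: that definition requires the removed compact to be sent by the distinguished chart onto a $(2n-2)$-dimensional sphere $\mathcal{S}_{2n-2}(b,r)$, not onto a point, and the strict pseudoconvexity proof (theorem \ref{La variete Y est pseudoconvexe}) depends essentially on that geometry --- the exhaustion piece (\ref{f_(N+1)}) blows up logarithmically along the removed $(2n-2)$-sphere and its plurisubharmonicity is checked for exactly that configuration. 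The analogous function near a puncture, $-\log\|\zeta\|^{2}$, is not plurisubharmonic in $\mathbb{C}^{n}$ for $n\geq 2$; indeed the punctured ball is the archetypal domain on which every holomorphic function extends (by Hartogs itself), so nothing in the paper, and nothing elementary, gives you a nonconstant holomorphic function on the complement of a point of a hypothetical complex $\mathcal{S}_{2n}$. That existence claim is essentially as deep as the problem you are trying to solve, and your whole extension-and-glue mechanism stalls without it. (Your side remark that $\{U,X_{\beta}\}$ is a Leray cover also fails: the overlap is a punctured ball, which is not $\mathcal{O}$-acyclic for $n=2$.) The paper avoids this entirely: it works with $X_{\alpha,\beta}=\mathcal{S}_{2n}-(\partial W_{\alpha}\cap\partial W_{\beta})$, whose removed compact \emph{is} a $(2n-2)$-sphere, and it never needs an arbitrary nonconstant global holomorphic function --- it only needs $\mathcal{H}^{0,1}(X_{\alpha,\beta})=\{0\}$ to split the specific $\check{C}$ech cocycle $\Phi_{\beta}-\Phi_{\alpha}$ built from the chart maps themselves, after which Hartogs makes the splitting functions constant and forces $\partial\Phi_{\alpha}=\partial\Phi_{\beta}$, and a second cohomological step kills $\partial\Phi_{\gamma}$ altogether.

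Your converse direction is the same strategy as the paper's but is only a statement of intent: you name the three obstacles (upgrading the non-extendable map to a non-extendable \emph{biholomorphism}, the cocycle condition, and identifying the glued manifold with $\mathcal{S}_{2n}$) without resolving any of them. The paper's lemma \ref{reciproque} performs the upgrade via the perturbation $F_{\lambda}(z)=(z_{1}+f(z)/\lambda,z_{2},\dots,z_{n})$ with $\lambda$ large (so the Jacobian is invertible and the level sets of $\|F_{\lambda}\|^{2}$ are convex), and the identification with the sphere is done by an explicit two-chart gluing using stereographic projection, geodesic convexity of the upper hemisphere, and convexity of $\mathcal{C}_{1}$. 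As written, your proposal establishes neither implication.
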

\begin{remark}$ $\\
\begin{enumerate}\label{remarque apres equiv}
\item Once theorem \ref{equiv} proved, proposition $\{Necs\}_{n}$ for $n \geq 2$, becomes then a geometric phenomenon equivalent to
the analytic phenomenon $\left\{Hath\right\}_{n}$.
 \item It is obvious that for $n=1$, the equivalence $\left\{Hath\right\}_{1}  \Longleftrightarrow \left\{Necs\right\}_{1}$ holds.
 Indeed, both propositions $\{Hath\}_{1}$ and $\{Necs\}_{1}$ are false, because for $D= \mathcal{B}_{1}$ and $\mathcal{K} = \left\{ 0\right\}$,
  the holomorphic function $f(z)= \frac{1}{z}$ defined on $\mathcal{B}_{1}-\{ 0\}$, does not admit an analytic continuation to $\mathcal{B}_{1}$,
   which means that $\{Hath\}_{1}$ is false, and in the same time, the sphere $\mathcal{S}_{2}$ is a Riemann surface, which means that $\mathcal{S}_{2}$
   admits a complex
   structure, and then $\{Necs\}_{1}$ is also false.
\item Observe that for $n \geq 2$, $\left\{Hath\right\}_{n}$ is already true, it is as mentioned above the statement for balls of the well known
 theorem on analytic continuation of holomorphic functions in $\mathbb{C}^{n}$, proved by Hartogs in 1906 (see \cite{10}). However, until proving
 the main theorem (theorem \ref{equiv}), we will not consider $\left\{Hath\right\}_{n}$  for $n \geq 2$, as already a theorem, but only as a logical
  proposition, without looking  whether it is true or false. But once we prove theorem \ref{equiv}, we will finally take into account the fact that
  for $n  \geq 2$, $\left\{Hath\right\}_{n}$ is already true, and then we will deduce immediately the exactness of $\left\{Necs\right\}_{n}$
  for $n \geq 2$, that, there exist no complex structures on the sphere $\mathcal{S}_{2n}$ for $n\geq 2$. Hence, the problem of complex spheres will
  finally be solved. In the same time, we obtain back a geometric interpretation of Hartogs' theorem in terms of non existence of complex
  structures on the spheres $\mathcal{S}_{2n}$ for $n \geq 2$. As far as I know, this is the first time that a geometric interpretation of Hartogs' theorem
  in terms of complex analytic geometry is given, since it is proved in 1906.
\end{enumerate}
\end{remark}
\subsubsection{\textbf{A version of $\left\{ Hath \right\}_{n}$ for $n  \geq 2$, in a complex chart}}$ $\\
Since it will be question to apply Hartogs' theorem on a sphere, it becomes then interesting to formulate a version of $\left\{Hath\right\}_{n}$
in a chart of
a complex analytic manifold $X$ of $dim_{\mathbb{C}}X \geq 2$. This is given in lemma \ref{Hartogs local} below.  But let's recall that we shall
continue temporarily (that is, before proving the main theorem) to consider  proposition $\left\{ Hath \right\}_{n}$ not as already a theorem, but only
as a logical proposition.
\begin{lemma}\label{Hartogs local}
Let $(U,\varphi)$ be a complex chart of a complex analytic manifold $X$ of $dim_{\mathbb{C}}X \geq 2$, and let $D$ be an open subset of $U$ and
 let $\mathcal{K}$ be a compact subset of $D$ such that:
 \begin{enumerate}
\item
 $D - \mathcal{K}$ is connected,
 \item $\varphi(D)$ is a ball of $\mathbb{C}^{n}$.
   \end{enumerate}
   Assume that proposition $\left\{ Hath \right\}_{n}$ is true, then for every holomorphic mapping $f: D -\mathcal{K}\longrightarrow
   \mathbb{C}^{k}$, there exists a holomorphic mapping $F: D \longrightarrow \mathbb{C}^{k}$ such that $F = f$ on $D - \mathcal{K}$.
\end{lemma}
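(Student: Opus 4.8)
The plan is to transport the ball--version $\left\{Hath\right\}_{n}$ from the model space $\mathbb{C}^{n}$ into the chart by means of the chart biholomorphism $\varphi$. Recall that a complex chart $(U,\varphi)$ of a complex analytic manifold is, by definition, such that $\varphi : U \longrightarrow \varphi(U) \subset \mathbb{C}^{n}$ is a biholomorphism onto its image; in particular both $\varphi$ and $\varphi^{-1}$ are holomorphic, and $\varphi$ is in particular a homeomorphism onto $\varphi(U)$. The whole argument is then a pullback--then--pushforward through $\varphi$, and there is no deep obstacle: the only point requiring care is checking that the images of the data under $\varphi$ fall exactly within the scope of proposition \ref{sphere sphere}, which is guaranteed precisely because $\varphi$ is a homeomorphism.

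First I would record the images of the data under $\varphi$. Set $\mathcal{B} = \varphi(D)$, which is a ball of $\mathbb{C}^{n}$ by hypothesis $(2)$, and consider $\varphi(\mathcal{K})$, which is a compact subset of $\mathcal{B}$ since $\varphi$ is continuous and $\mathcal{K} \subset D$ is compact. Because $\varphi$ is injective we have $\varphi(D - \mathcal{K}) = \varphi(D) - \varphi(\mathcal{K}) = \mathcal{B} - \varphi(\mathcal{K})$, and since $\varphi$ is a homeomorphism this set is connected, by hypothesis $(1)$. Thus the triple $\left( \mathcal{B}, \varphi(\mathcal{K}), \mathcal{B} - \varphi(\mathcal{K})\right)$ satisfies exactly the hypotheses of proposition $\left\{Hath\right\}_{n}$ (proposition \ref{sphere sphere}): a ball, a compact subset of it, and a connected complement.

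Next I would pull the given mapping back to the ball and invoke $\left\{Hath\right\}_{n}$. The composition $g := f \circ \varphi^{-1} : \mathcal{B} - \varphi(\mathcal{K}) \longrightarrow \mathbb{C}^{k}$ is holomorphic, being a composite of holomorphic maps. Assuming proposition $\left\{Hath\right\}_{n}$, there exists a holomorphic mapping $G : \mathcal{B} \longrightarrow \mathbb{C}^{k}$ with $G = g$ on $\mathcal{B} - \varphi(\mathcal{K})$. Finally, I would push this extension back to the chart by setting $F := G \circ \varphi : D \longrightarrow \mathbb{C}^{k}$, which is holomorphic as a composite of holomorphic maps. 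For every $z \in D - \mathcal{K}$ one has $\varphi(z) \in \mathcal{B} - \varphi(\mathcal{K})$, whence $F(z) = G(\varphi(z)) = g(\varphi(z)) = f(\varphi^{-1}(\varphi(z))) = f(z)$. Hence $F = f$ on $D - \mathcal{K}$, which is the desired extension and completes the proof.
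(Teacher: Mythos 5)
Your proof is correct and follows essentially the same route as the paper's: conjugate the data by the chart homeomorphism $\varphi$, verify that $\left(\varphi(D),\varphi(\mathcal{K})\right)$ satisfies the hypotheses of $\left\{Hath\right\}_{n}$, extend in the ball, and pull the extension back via $F=\widetilde{F}\circ\varphi$. Your version is, if anything, slightly more careful in using injectivity of $\varphi$ to justify $\varphi(D-\mathcal{K})=\varphi(D)-\varphi(\mathcal{K})$, a point the paper passes over silently.
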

\begin{proof}
Let $f: D - \mathcal{K} \longrightarrow \mathbb{C}^{k}$ be a holomorphic mapping. This means that the mapping $\widetilde{f}=
f \circ \varphi^{-1}: \varphi (D) - \varphi (\mathcal{K}) \longrightarrow \mathbb{C}^{k}$ (which is the expression of $f$ in the chart $(U,\varphi)$)
is holomorphic. Since $\varphi$ is a homeomorphism, then $\varphi(\mathcal{K})$ is a compact subset of $\varphi(D) \subset \mathbb{C}^{n}$ and $\varphi(D)
- \varphi (\mathcal{K})$ is connected in $\mathbb{C}^{n}$. $\varphi(D)$ being a ball of $\mathbb{C}^{n}$, then according to $\left\{Hath\right\}_{n}$,
there exists a holomorphic mapping $\widetilde{F}: \varphi(D) \longrightarrow \mathbb{C}^{k}$ such that $\widetilde{F} = \widetilde{f}$ in
$\varphi (D) - \varphi
(\mathcal{K})$. But $\widetilde{F}$ is exactly the expression in the chart $(U,\varphi)$ of the mapping $F = \widetilde{F} \circ \varphi:
 D \longrightarrow \mathbb{C}^{k}$ which is holomorphic in $D$ and satisfies $F = f$ in $D - \mathcal{K}$. Lemma \ref{Hartogs local} is then proved.
\end{proof}
\subsection{Hypothetic $\mathcal{A}-$submanifolds of $\mathcal{S}_{2n}$}$ $\\
As mentioned in the introduction, the main geometric object to be studied in this paper is the notion of $\mathcal{A}-$submanifold of $\mathcal{S}_{2n}$.
This notion is introduced in the definition \ref{A-sous variete de S_2n} below, and it will be a powerful tool to prove the main theorem \ref{equiv}.
But let's insist on the fact that the notion of $\mathcal{A}-$submanifold of $\mathcal{S}_{2n}$ is purely hypothetic, since
its existence depends only on the assumption that proposition $\left\{Necs\right\}_{n}$ is false, that is,
on the assumption that $\mathcal{S}_{2n}$ admits a complex structure $\mathcal{A}$, and then it will be used only as a technical tool to prove
by contradiction theorem \ref{equiv} .
\subsubsection{Notations and definition}\label{L'atlas A}$ $\\
Assume that $\left\{Necs\right\}_{n}$ is false, that is, $\mathcal{S}_{2n}$ admits a complex structure defined by an hypothetic complex atlas
$$
    \mathcal{A}  = \bigg\{ \left( U_{\jmath}, \varphi_{\jmath}\right), \phantom{rrr} \jmath \in \mathcal{J}  \bigg\}
$$
where $\left\{U_{\jmath},  \phantom{r}\jmath \in \mathcal{J} \right\}$ is an open covering of $\mathcal{S}_{2n}$, and
$\varphi_{\jmath}: U_{\jmath}\subset \mathcal{S}_{2n} \longrightarrow \varphi_{\jmath}\left( U_{\jmath}\right) \subset \mathbb{C}^{n}$ are homeomorphisms
such that for all $(\imath,\jmath) \in \mathcal{J}^{2}$ with $U_{\imath} \bigcap U_{\jmath} \neq\emptyset$, the following change of charts
$$
\varphi_{\jmath} \circ \varphi_{\imath}^{-1} :\varphi_{\imath} \left(U_{\imath} \bigcap U_{\jmath}\right)\longrightarrow \varphi_{\jmath} \left(U_{\imath}
 \bigcap U_{\jmath}\right)
$$
is biholomorphic.
By refining if necessarily the atlas $\mathcal{A}$, we can always assume taking into account the compactness of $\mathcal{S}_{2n}$, that $\mathcal{A}$ has
a finite number of charts
$$card (\mathcal{A}) = N +1 $$
 and that $\phantom{r}\mathcal{A}$ satisfies furthermore the following conditions:\\
 \begin{equation}\label{condition 1 des cartes}
   \textrm{For all}\phantom{r}\jmath \in \mathcal{J},\phantom{rrrr} \varphi_{\jmath}\left( U_{\jmath} \right) \phantom{r} \textrm{is a ball of} \phantom{r}
   \mathbb{C}^{n}.\phantom{rrrrrrrrrrrrrr}
   \end{equation}
   \begin{equation}\label{condition 2 des cartes}
    \textrm{For all} \phantom{r}(\imath,\jmath) \in \mathcal{J}^{2}, \phantom{rr} U_{\imath}\bigcap  \partial U_{\jmath}\phantom{rr}
     \textrm{is simply connected}.
 \end{equation}
 \begin{remark}
In all that follows, the hypothetic complex atlas $\mathcal{A}$ will be fixed once for all, and whenever we refer to an hypothetic complex structure
 on $\mathcal{S}_{2n}$ , this will always be the complex structure $\mathcal{A}$ considered above.
\end{remark}
\begin{definition}\label{A-sous variete de S_2n}$ $\\
Assume that proposition $\left\{Necs\right\}_{n}$ is false, and let then
$$
    \mathcal{A}  = \bigg\{ \left( U_{\jmath}, \varphi_{\jmath}\right), \phantom{rrr} \gamma \in \mathcal{J}  \bigg\}
$$
be the hypothetic complex structure on $\mathcal{S}_{2n}$ considered above. An hypothetic complex submanifold $X$ of $\mathcal{S}_{2n}$ is said to be
an $\mathcal{A}-$submanifold of $\mathcal{S}_{2n}$, if there exist a chart
$\left( U_{d}, \varphi_{d}\right) \in \mathcal{A}$, and a compact subset
$\mathcal{K}$ of $\mathcal{S}_{2n}$ such that the following conditions are satisfied:
\begin{itemize}
  \item [(a)]$X = \mathcal{S}_{2n} - \mathcal{K}$ \phantom{rrrrrrrrrrrr} ($X$ is then open).
  \item [(b)] $\mathcal{K}\subset U_{d}$.
  \item [(c)] $\varphi_{d}(\mathcal{K})= \mathcal{S}_{2n-2}(b,r)$ \phantom{rrrrrr} (that is, $\varphi_{d}(\mathcal{K})$ is a sphere of $\mathbb{C}^{n}$
   of $dim = 2n-2$).
\end{itemize}
$\left( U_{d}, \varphi_{d}\right)$ is said to be a distinguished chart of $\mathcal{A}$ for the $\mathcal{A}-$submanifold $X$.
\end{definition}
\begin{remark}\label{Phi(K) est une sphere centree en 0}$ $\\
\begin{enumerate}
  \item [(a)] By observing that the translations of $\mathbb{C}^{n}$ are biholomorphic mappings, then without changing the hypothetic complex structure
of $\mathcal{S}_{2n}$,
we can always  assume that $\varphi_{d}(\mathcal{K})$ is a sphere centered at $b = 0$, that is
\begin{equation}\label{phi(K)}
    \varphi_{d}\left( \mathcal{K}\right) = S_{2n-2}\left( 0, r_{2}\right).
\end{equation}
 \item [(b)] The distinguished chart $\left( U_{d}, \varphi_{d}\right) \in \mathcal{A}$ is not necessarily unique, but the following proposition shows
that for a given $\mathcal{A}-$submanifold $X$, we can always choose an hypothetic complex atlas $\widetilde{\mathcal{A}}$ of the sphere $\mathcal{S}_{2n}$
 finer
than $\mathcal{A}$ such that $X$ admits one and only one distinguished chart.
\end{enumerate}
\end{remark}
\begin{proposition}\label{proposition d'unicité de la carte distinguée}
Let $X$ be an $\mathcal{A}-$submanifold of $\mathcal{S}_{2n}$. Then there exists on $\mathcal{S}_{2n}$ an atlas $\widetilde{\mathcal{A}}$
 equal or finer than $\mathcal{A}$, such that $X$ viewed as $\widetilde{\mathcal{A}}-$submanifold of $\mathcal{S}_{2n}$ admits one and only one
  distinguished chart.
\end{proposition}
\begin{proof}$ $\\
Let $\left( U_{d}, \varphi_{d}\right)$ and $\left( U_{d'}, \varphi_{d'}\right)$ be two distinguished charts for the $\mathcal{A}-$submanifold $X$. $X$
can then be written as follows $$X = \mathcal{S}_{2n}- \mathcal{K} $$
where $\mathcal{K}$ is a compact subset of $\mathcal{S}_{2n}$ and  $$\mathcal{K} \subset U_{d} \bigcap U_{d'}.$$
Let $\mathbf{T}$ be an open neighborhood of $\mathcal{K}$  such that
$$
\mathcal{K} \subset \mathbf{T} \subset U_{d} \bigcap U_{d'},
$$
that is, such that $$\left(U_{d'} - \overline{\mathbf{T}}\right) \bigcap \mathcal{K} = \emptyset.$$
By recovering in $\varphi_{d'}\left( U_{d'} \right) \subset \mathbb{C}^{n}$, the following open set
$$\varphi_{d'}\left( U_{d'}\right) - \varphi_{d'}\left( \overline{\mathbf{T}}\right)$$ by small open balls $\mathcal{B}(a,r)$ such that
$$  \mathcal{B}(a,r) \bigcap \varphi_{d'}\left( \overline{\mathbf{T}}\right) = \emptyset,$$
we can then define new complex charts $\left( U_{d'}^{a,r}, \varphi_{d'}^{a,r}\right)$ by setting
\begin{equation*}
\left\{
    \begin{split}
       U_{d'}^{a,r}  & =   \varphi_{d'}^{-1}\left(\mathcal{B}(a,r)\right) \\
       \varphi_{d'}^{a,r}  & = \varphi_{d'}\big/U_{d'}^{a,r} \phantom{rrrrrr} \textrm{(that is, by restriction)}.
     \end{split}
     \right.
\end{equation*}
Since $\mathcal{S}_{2n}$ is compact, then by replacing in $\mathcal{A}$, the distinguished chart $\left( U_{d'}, \varphi_{d'}\right)$ by a
union of a finite number of new charts as defined above, we obtain a new atlas denoted $\widetilde{\mathcal{A}}$,
finer than $\mathcal{A}$, in which, the chart $\left( U_{d}, \varphi_{d}\right)$ is the only distinguished chart for
the $\widetilde{\mathcal{A}}-$submanifold $X$.
\end{proof}
\begin{remark}$ $\\
1) If there exist more than two distinguished charts for an $\mathcal{A}-$submanifold $X$, we prove
proposition \ref{proposition d'unicité de la carte distinguée} by induction. \\
2) By substituting if necessarily the atlas $\mathcal{A}$ by $\widetilde{\mathcal{A}}$, we can according
 to proposition \ref{proposition d'unicité de la carte distinguée}, always assume that a given $\mathcal{A}-$submanifold $X$ admits
 one and only one distinguished chart $\left( U_{d}, \varphi_{d}\right)$.
\end{remark}
\subsubsection{Complex structure of an $\mathcal{A}-$submanifold of $\mathcal{S}_{2n}$}\label{structure complexe de X}$ $\\
 Let $$ X = \mathcal{S}_{2n} - \mathcal{K}$$ be an $\mathcal{A}-$submanifold of $\mathcal{S}_{2n}$.
We want to describe the complex structure of $X$ inhered from $\mathcal{A}$. For this,  let the distinguished
chart $\left( U_{d}, \varphi_{d}\right) \in \mathcal{A}$ for $X$,
 and let by condition (\ref{condition 1 des cartes}) the ball
 $$
 \mathcal{B}\left( a , r_{1}\right) = \varphi_{d} \left( U_{d}\right) \subset \mathbb{C}^{n}
 $$
 and let by (\ref{phi(K)}) the sphere
 $$
 S_{2n-2}\left( b, r_{2}\right) = \varphi_{d}\left( \mathcal{K}\right) \subset \mathcal{B}\left( a , r_{1}\right).
 $$
 Since $dim \phantom{.} S_{2n-2}\left( 0, r_{2}\right) = 2n-2$, there exists then a real vector space $F \subset \mathbb{C}^{n}$
$$F = \bigg\{ \zeta \in \mathbb{C}^{n}, \phantom{rr} \left\langle w , \zeta \right\rangle =0\bigg \}$$
of $dim \phantom{.}F = 2n-1$, with $w \neq 0$,
 such that
 $$ S_{2n-2}\left( 0, r_{2}\right) \subset  F \subset \mathbb{C}^{n}.$$
Set now
\begin{equation}\label{couronne}
    \mathcal{N}\left(a,r_{1},r_{2}\right): = \mathcal{B}\left( a , r_{1} \right) - S_{2n-2}\left( 0, r_{2} \right) \subset \mathbb{C}^{n}.
\end{equation}
With these notations, the complex structure of the $\mathcal{A}-$submanifold $X$ is then given by the restricted atlas
$$\mathcal{A}\big/ X : = \bigg\{ \left( \widetilde{U_{\jmath}}, \widetilde{\varphi_{\jmath}}\right), \phantom{rrr} \jmath \in \mathcal{J}  \bigg\}$$
where the charts $\left( \widetilde{U}_{\jmath}, \widetilde{\varphi}_{\jmath} \right)$ of $\mathcal{A}\big/ X$ are defined as follows:
\begin{enumerate}
\item  For $ \jmath = d$, the chart $\left( \widetilde{U}_{d}, \widetilde{\varphi}_{d} \right)$ called distinguished chart for $X$  is given by
\begin{enumerate}
  \item $ \widetilde{U}_{d} : =  U_{d} - \mathcal{K} \subset X.$
  \item  $\widetilde{\varphi}_{d}: = \varphi_{d}\big/ _{\left(U_{d} - \mathcal{K}\right)}$ \phantom{rrrrrrrrrrrr} ($\widetilde{\varphi}_{d}$ is
   the restriction of $\varphi_{d}$ to $U_{d} - \mathcal{K}$).
\end{enumerate}
The restricted mapping
$$
\varphi_{d}: U_{d} - \mathcal{K} \longrightarrow  \mathcal{N}(a,r_{1},r_{2}).
$$
is then a homeomorphism.
\item  For all $\jmath\in\mathcal{J}-\left\{d\right\}$, $\phantom{rr} \left(\widetilde{U}_{\jmath},
 \widetilde{\varphi}_{\jmath}\right)  = \left(U_{\jmath}, \varphi_{\jmath}\right)$\\
    which implies that for all $\jmath \neq d$,  $\widetilde{\varphi}_{\jmath}
    \left( \widetilde{U}_{\jmath}\right) = \varphi_{\jmath} \left( U_{\jmath}\right)$ is a ball of $\mathbb{C}^{n}$.
\end{enumerate}
Let $\mathcal{W}$ be the relatively compact open set of $X$ defined by
$$
 \mathcal{W} = \mathcal{S}_{2n} - \overline{U_{d}}.
 $$
$\mathcal{W}$ is then covered by all domains of charts $ U_{\jmath}$ with $ \jmath \neq d$, that is
$$
\mathcal{W} \subset \bigcup_{\jmath\neq d} U_{\jmath}.
$$
Since by (\ref{condition 1 des cartes}) $\varphi_{d}\left( U_{d}\right)$ is an open ball of $\mathbb{C}^{n}$, we obtain
\begin{equation}\label{le bord de W est une sphere}
    \partial \mathcal{W} = \partial U_{d}\approx \mathcal{S}_{2n-1}
\end{equation}
and since by stereographic projection, $\varphi_{d}\left(\mathcal{W}\right))$ is relatively compact in $\mathbb{C}^{n}$,
 we deduce that
\begin{equation}\label{W est homeomorphe à une boule}
    \mathcal{W} \approx \mathcal{B}(0,r).
\end{equation}
\begin{remark} $ $\\The geometry of an $\mathcal{A}-$submanifold  $X$ of $\mathcal{S}_{2n}$ is characterized by two objects:
\begin{enumerate}
  \item The distinguished complex chart $\psi_{d}: U_{d} - \mathcal{K} \longrightarrow  \mathcal{N}(a,r_{1},r_{2})$.
\item The relatively compact open subset $\mathcal{W} = \mathcal{S}_{2n}- \overline{U_{d}}$, which is covered by the other charts
$\left(U_{\jmath}, \varphi_{\jmath}\right)$ and which is homeomorphic to an open ball of $\mathbb{C}^{n}$.
\end{enumerate}
In this paper, we will prove for $\mathcal{A}-$submanifolds of $\mathcal{S}_{2n}$, two fundamental properties:
\begin{enumerate}
  \item Every $\mathcal{A}-$submanifold of $\mathcal{S}_{2n}$ is homeomorphic to $\mathbb{C}^{n}$.
   \item Every $\mathcal{A}-$submanifold $X$ of $\mathcal{S}_{2n}$ is strictly pseudoconvex, and then by Cartan's theorem B, the group of
    cohomology of bidegree (0,1) of $X$ is trivial
       $$\mathcal{H}^{0,1}\left(X, \mathbb{C}^{n} \right) = \left\{ 0 \right\}
       .$$
\end{enumerate}
\end{remark}
\subsubsection{\textbf{The strategy of proof.}}$ $\\
It is now legitimate to ask how one can apply Hartogs' theorem to prove the theorem of complex spheres?
Or in other words, how one can prove the implication $\{Hath\}_{n} \Longrightarrow \left\{Necs\right\}$ for $n \geq 2$?\\
Before presenting our approach of proof, let's start with some remarks:
\begin{remark}
\begin{enumerate}
  \item It is obvious for reason of compactness, that $\mathcal{S}_{2n}$ can never admit a complex structure in one chart.
  \item Hartogs' theorem \ref{hartogs}, implies immediately that $\mathcal{S}_{2n}$ with $n \geq 2$, can neither admit a complex structure in two charts.\\
  Indeed, If $\left(U_{1}, \varphi_{1}\right)$ and $\left( U_{2}, \varphi_{2} \right)$ are two complex charts on $\mathcal{S}_{2n}$ with $$S_{2n}=U_{1}
  \bigcup U_{2}$$ then the holomorphic mapping
$$\varphi_{2}: U_{1}\cap U_{2}\subset \mathcal{S}_{2n} \longrightarrow \varphi_{2}\left(U_{1}\cap U_{2} \right)\subset
\mathbb{C}^{n}$$ ($U_{1}\bigcap U_{2}$
connected) admits by Hartogs' theorem an analytic continuation to $U_{1}$. Therefore, $\varphi_{2}$ admits an analytic continuation to the whole sphere
$\mathcal{S}_{2n}$, and then becomes constant. For the same reason, we obtain the same result for $\varphi_{1}$. But this contradicts the fact that
$\varphi_{2}\circ \varphi_{1}^{-1}$ is bi-holomorphic.
\end{enumerate}
\end{remark}
The proof that $\mathcal{S}_{2n}$ with $n\geq 2$, could never admit a complex structure in more than two charts, seems somewhat more complicated, but it is
precisely this hypothetic situation which we will show in this paper that it is impossible.\\
\textbf{What is the main idea of proof?} $ $\\ This idea consists roughly speaking, in proving by contradiction that if for $n \geq 2$,
$ \left\{Necs\right\}$ is false, then for an appropriate atlas $\mathcal{F}=
 \bigg\{\left(W_{\gamma},\Phi_{\gamma} \right),\phantom{rr} \gamma\in \mathcal{L}\bigg\}$ finer than $\mathcal{A}$, one can construct for all pairing
  of different charts $\left(W_{\alpha},\Phi_{\alpha} \right)$,  $\left(W_{\beta},\Phi_{\beta} \right) \in \mathcal{F}$ satisfying the condition
  $W_{\alpha}\bigcap W_{\beta} \neq \emptyset$,
  an $\mathcal{A}-$submanifold
   $X_{\alpha,\beta}$ of $\mathcal{S}_{2n}$ in the sense of definition \ref{A-sous variete de S_2n}, and a harmonic mapping
$$H: X_{\alpha,\beta} \longrightarrow \mathbb{C}^{n}$$ with $H = H_{1}+ \overline{H_{2}}$,  ($H_{1}$ and $H_{2}$ are holomorphic)
such that for all $\alpha \in \mathcal{L}$
$$\partial H/W_{\alpha} = \partial\Phi_{\alpha}$$
which implies by Hartogs' theorem that $\Phi_{\alpha}$ is constant. The proof of the existence of such harmonic mapping $H$ needs some preparation, and it
will be done later in section 4). Leaving then the details to section 4), we sketch below just the procedure of proof.\\
\textbf{\underline{The procedure of proof.}} $ $
 \begin{enumerate}
\item We begin by proving that $X_{\alpha,\beta}$ is homeomorphic to $\mathbb{C}^{n}$. This result is given in section 2).
\item Then we prove that $\mathcal{H}^{0,1}\left(X_{\alpha,\beta},\mathbb{C}^{n} \right) = \left\{ 0 \right\}$, which implies by Dolbeault's resolution of
the sheaf $\mathcal{O}^{n}$ that the first $\check{C}$ech cohomology group is trivial $$\mathcal{H}^{1}\left(X_{\alpha,\beta}, \mathcal{O}^{n}\right)
= \left\{ 0 \right\}.$$
    The proof of the triviality of the group $\mathcal{H}^{1}\left(X_{\alpha,\beta}, \mathbb{C}^{n}\right)$ will be the main object of section 3).
 \item  We deduce from 2) that the 1-cycle $g_{\mu,\nu}$ with coefficients in the sheaf $\mathcal{O}^{n}$ defined on $X_{\alpha,\beta}$ by
 \begin{displaymath}
 f_{\mu,\nu} = \left\{
 \begin{split}
   \Phi_{\beta} - \Phi_{\alpha}& = \phantom{rrr} \textrm{if}\phantom{r} (\mu,\nu)=(\alpha,\beta) \\
   0 \phantom{rrr}& =  \phantom{rrr} \textrm{if}\phantom{r} (\mu,\nu) \neq (\alpha,\beta).
 \end{split}
 \right.
 \end{displaymath}
 is a 1-coboundary, that is  $$f_{\mu,\nu} = F_{\nu} - F_{\mu}.$$
 \item  The restrictions $F_{\alpha}/(X_{\alpha,\beta}-\overline{W_{\alpha}})$ and $F_{\beta}/(X_{\alpha,\beta}-\overline{W_{\beta}})$
 being holomorphic, admit both by Hartogs' theorem (\footnote{It is at this stage of the proof that we will use the fact that $ n \geq 2$.})
 an analytic continuation to the sphere $\mathcal{S}_{2n}$, and therefore become constant on $\mathcal{S}_{2n}$, and then
     $$\partial\Phi_{\alpha} = \partial \Phi_{\beta} \phantom{rrrrrr} \textrm{on} \phantom{rr}W_{\alpha}\cap W_{\beta}.$$
 Gluing the different pieces $\partial\Phi_{\gamma} $ together, there exists then a $\partial-$closed form $\omega$ with values in $\mathbb{C}^{n}$, of
  bidegree $(1,0)$, such that $\omega/W_{\gamma} = \partial \Phi_{\gamma}$
  \item Once again, using the triviality of the group $\mathcal{H}^{1,0}\left( X_{\alpha,\beta} ,\mathbb{C}^{n} \right) $, we prove that there exists a
  harmonic mapping $H = H_{1} + \overline{H}_{2}: X_{\alpha,\beta} \longrightarrow \mathbb{C}^{n}$ (with $H_{1}$ and $H_{2}$ holomorphic), such that
   $\partial H = \omega.$
  \item  $H_{1}$ and $H_{2}$ admit by Hartogs' theorem analytic continuations to the sphere $S_{2n}$, and then become constant. Hence, by (3) and (4)
  we obtain $\partial \Phi_{\gamma} = 0$. But this contradicts the fact that $(W_{\gamma},\Phi_{\gamma})$ is a complex analytic chart.
 \end{enumerate}
 \begin{remark}$ $\\ 1) It appears clearly that the main piece which foods into the procedure of proof sketched above, is  the fact that the group of
  cohomology of Dolbeault of bidgree (0,1) of any $\mathcal{A}-$submanifold of $\mathcal{S}_{2n}$ is trivial, and then the reader who would admit
  temporarily this result could read directly the proof of the main theorem \ref{equiv} in section 4).\\
2) As mentioned in the introduction, although the problem of complex spheres stands only for $\mathcal{S}_{6}$, our approach is however valid for all
spheres $\mathcal{S}_{2n}$ with $ n \geq 2$, and then, we don't have to use the result of Borel Serre \cite{1}.
\end{remark}
\section{\textbf{Topology of the $\mathcal{A}-$submanifolds of $S_{2n}$}}
This section is devoted to prove two topological results: The first one is that every $\mathcal{A}-$submanifold of $\mathcal{S}_{2n}$
is homeomorphic to $\mathbb{C}^{n}$. The second one (see proposition \ref{proposition des boules}) shows that under some supplementary conditions,
If $A$ and $B$  are open sets of $\mathbb{C}^{n}$, both homeomorphic to balls of $\mathbb{C}^{n}$, then $B - A$ is homeomorphic to a ball
 of $\mathbb{C}^{n}$. As a consequence of this result, we deduce for the relatively compact open set defined by
 $\mathcal{W} = \mathcal{S}_{2n}- \overline{U_{d}}$ where $\left( U_{d}, \varphi_{d}\right)$ is the distinguished chart
  of an $\mathcal{A}-$submanifold of $\mathcal{S}_{2n}$, that there exist open sets
  $$\mathcal{W}_{1} , \mathcal{W}_{2}, ..., \mathcal{W}_{N}$$ all homeomorphic to balls of $\mathbb{C}^{n}$ such that
$$
\mathcal{W}_{1} \subset \mathcal{W}_{2} \subset ...\subset \mathcal{W}_{N}= \mathcal{W}.
$$
Such a result will be very useful in the next section to prove that any $\mathcal{A}-$submanifolds of $\mathcal{S}_{2n}$ is strictly pseudoconvex.
To prove these results, we need first to check some lemmas concerning some special homeomorphisms of $\mathbb{C}^{n}$.
\subsection{Some special homeomorphisms of $\mathbb{C}^{n}$}
\begin{lemma}\label{preliminaire à la proposition}$ $\\
Let the ball $\mathcal{B}_{r} \subset \mathbb{C}^{n}$, and let $\mathcal{K}$ be a compact subset of $\mathcal{B}_{r}$. Then for every $ \varepsilon > 0$,
there exists a homeomorphism $f: \mathbb{C}^{n}  \longrightarrow   \mathbb{C}^{n}$ satisfying the following conditions:
\begin{equation*}\left\{
    \begin{split}
      f(z)  &  = z \phantom{rr} \textrm{if} \phantom{rr}z \not\in \overline{\mathcal{B}_{r}} \\
        diam\big(f(\mathcal{K})\big) &  < \varepsilon.
    \end{split}
    \right.
\end{equation*}
\end{lemma}
\begin{proof}
Since the ball $\mathcal{B}_{r}$ is open and contains the compact $\mathcal{K}$, then there exists an open ball $\mathcal{B}_{r'}$ with $(0<  r' < r)$,
such that
$$
\mathcal{K} \subset \mathcal{B}_{r'} \subset \mathcal{B}_{r}.
$$
Let the piecewise linear homeomorphism $\chi: \mathbb{R}_{+}\longrightarrow \mathbb{R}_{+}$ given by:
\begin{equation}\label{le premier ki}
  \chi(t): =   \left\{
\begin{split}
   &  \phantom{rrrrrr}\frac{\varepsilon.t}{2 r'}\phantom{rrrrrrrrrrrrrrrrr} \textrm{ if } \phantom{r}  0 \leq t \leq r'\\ & \\
   &   \frac{ \left(r-\frac{\varepsilon}{2}\right)(t- r')}{r-r'} + \frac{\varepsilon}{2}\phantom{rrrrrrr} \textrm{ if } \phantom{r}  r' \leq t \leq r\\
   & \\
   & \phantom{rrrrrr} t \phantom{rrrrrrrrrrrrrrrrrr} \textrm{ if } \phantom{r} t \geq r.
\end{split}
\right.
\end{equation}
and define the mapping $f: \mathbb{C}^{n} \longrightarrow \mathbb{C}^{n}$ by the following expression:
\begin{equation}\label{premier homeomorphism}
    f(z) : =
    \left\{
    \begin{split}
       & z.\frac{\chi(\|z\|)}{\|z\|} \phantom{rrrrrrrrr} \textrm{if} \phantom{rr} z \neq 0 \\
        & \phantom{rrrr} 0  \phantom{rrrrrrrrrrrr} \textrm{if} \phantom{rr} z = 0.
    \end{split}
    \right.
\end{equation}
By writing $z \in \mathbb{C}^{n}$ in spherical coordinates, we observe that the homeomorphism $\chi : \mathbb{R}_{+}\longrightarrow \mathbb{R}_{+}$
acts only on the norm $\| z\|$, and not on the angles of $z$.  The mapping $f$ is then a homeomorphism of $\mathbb{C}^{n}$. Since for $\|z\| \geq 1$,
 we have $\chi(\|z\|) = \|z\|$, we deduce then that $f(z)=z$ in $\mathbb{C}^{n}-\mathcal{B}_{r}$. Now let $z, \xi \in \mathcal{K}$,
 that is, $\|z\|\leq r'$ and $\|\xi\| \leq r'$. \\
1) If $\xi = 0$, then  $\left\|f(z) - f(0) \right\| = \left\| f(z)\right\| = \frac{\varepsilon .\left\|z\right\|}{2 r'} \leq \frac{\varepsilon}{2}
 \leq \varepsilon.$ \\
2) If both $z$ and $\xi$ are $\neq 0$, then
an elementary calculation gives
\begin{equation}\label{premier homeo}
    \begin{split}
      \| f(z)-f(\xi)\| & \leq \| f(z)\| + \|f(\xi)\|\\
& \leq \frac{\|z\|}{2\varepsilon r'} +  \frac{\|\xi\|}{2\varepsilon r'}\\
&\leq \frac{\varepsilon}{2} + \frac{\varepsilon}{2}\\
& \leq \varepsilon
    \end{split}
\end{equation}
which means that $diam(f(\mathcal{K}))\leq \varepsilon$. The proof of lemma \ref{preliminaire à la proposition} is then complete.
\end{proof}
 \begin{lemma}\label{tailleur}(\textbf{Flattening homeomorphism})$ $\\
 Let $\overline{\mathcal{B}}(a,r)$ be a closed ball of $\mathbb{C}^{n}$, and let $\mathcal{K}$ be a closed subset of $\overline{\mathcal{B}}(a,r)$
 such that the center $a \not\in \mathcal{K}$. Then for all $\varepsilon \in \left] 0,r  \right[$, there exists a homeomorphism $f: \mathbb{C}^{n}
 \longrightarrow \mathbb{C}^{n}$ satisfying the conditions:
 \begin{equation*}\left\{
  \begin{split}
    f(z) & = z \phantom{rrrrrr} \textrm{if} \phantom{r} z \not\in \overline{\mathcal{B}}(a,r) \\
     f (\mathcal{K}) & \subset \overline{\mathcal{B}}(a,r) - \mathcal{B}(a,r-\varepsilon).
  \end{split}\right.
 \end{equation*}
 \end{lemma}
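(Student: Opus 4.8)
The plan is to mimic the radial construction of Lemma \ref{preliminaire � la proposition}, but this time pushing $\mathcal{K}$ \emph{outward} toward the bounding sphere rather than shrinking it toward the center. The key quantity is the distance $\delta := \mathrm{dist}(a,\mathcal{K})$. Since $\mathcal{K}$ is closed and $a \notin \mathcal{K}$, we have $\delta > 0$; and since $\mathcal{K} \subset \overline{\mathcal{B}}(a,r)$ we have $\delta \leq r$. If $\mathcal{K} = \emptyset$, or if $\delta = r$ (so that $\mathcal{K} \subset \mathcal{S}_{2n-1}(a,r)$ already lies in the shell $\overline{\mathcal{B}}(a,r) - \mathcal{B}(a,r-\varepsilon)$), then $f = \mathrm{id}$ settles the lemma; so I assume $0 < \delta < r$. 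Every point $z$ of $\mathcal{K}$ then satisfies $\delta \leq \|z - a\| \leq r$.

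Next I would build a strictly increasing, piecewise-linear homeomorphism $\chi : \mathbb{R}_{+} \to \mathbb{R}_{+}$ which equals the identity on $[r,\infty)$ and which sends the radius $\delta$ up to $r-\varepsilon$: concretely, $\chi$ is linear from $(0,0)$ to $(\delta,\,r-\varepsilon)$ on $[0,\delta]$, linear from $(\delta,\,r-\varepsilon)$ to $(r,r)$ on $[\delta,r]$, and $\chi(t)=t$ for $t \geq r$. All three slopes are positive (because $0 < r-\varepsilon < r$ and $0 < \delta < r$), the values match at the break points $t=\delta$ and $t=r$, and $\chi(0)=0$, so $\chi$ is indeed an increasing homeomorphism of $\mathbb{R}_{+}$.

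I would then define $f$ to be the radial map centered at $a$ induced by $\chi$:
\[
f(z) := a + (z-a)\,\frac{\chi(\|z-a\|)}{\|z-a\|}\quad (z \neq a), \qquad f(a) := a.
\]
Exactly as in Lemma \ref{preliminaire � la proposition}, writing $z-a$ in spherical coordinates shows that $\chi$ acts only on the radius $\|z-a\|$ and not on the angular part, so $f$ is a homeomorphism of $\mathbb{C}^{n}$ satisfying $\|f(z)-a\| = \chi(\|z-a\|)$.

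Finally I would verify the two required properties. For $z \notin \overline{\mathcal{B}}(a,r)$ we have $\|z-a\| > r$, hence $\chi(\|z-a\|)=\|z-a\|$ and $f(z)=z$. For $z \in \mathcal{K}$ we have $\delta \leq \|z-a\| \leq r$, and monotonicity of $\chi$ gives $r-\varepsilon = \chi(\delta) \leq \chi(\|z-a\|) \leq \chi(r) = r$, that is $r-\varepsilon \leq \|f(z)-a\| \leq r$, which is exactly the condition $f(z) \in \overline{\mathcal{B}}(a,r) - \mathcal{B}(a,r-\varepsilon)$. No step here is a serious obstacle; the only points needing care are the strict positivity of $\delta$ (resting on $\mathcal{K}$ closed and $a \notin \mathcal{K}$) and the degenerate case $\delta = r$, both disposed of at the outset, together with the bookkeeping that the choice $\chi(\delta)=r-\varepsilon$ keeps the whole arc $\chi([\delta,r])$ inside $[r-\varepsilon,r]$.
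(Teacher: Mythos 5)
Your construction is essentially identical to the paper's: the paper sets $b:=\inf_{z\in\mathcal{K}}\|z-a\|>0$ and uses exactly the same piecewise-linear $\chi$ (linear from $(0,0)$ to $(b,r-\varepsilon)$, then to $(r,r)$, then the identity) composed with the radial map centered at $a$. Your write-up is correct, and in fact slightly more careful than the paper's in isolating the degenerate case $\delta=r$ (where the paper's middle linear piece would divide by zero) and in spelling out the final containment $r-\varepsilon\leq\chi(\|z-a\|)\leq r$.
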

 \begin{remark}$ $\\
 We don't know if the term "flattening homeomorphism" exists already in mathematics, but we adopt it here in order to give an exact idea on
 how the
 homeomorphism considered above acts. Indeed, $f$ acts really by "flattening" the compact $\mathcal{K}$ into the annulus
  $\overline{\mathcal{B}}(a,r) - \mathcal{B}(a,r-\varepsilon).$
 \end{remark}
 \begin{proof} Let $\varepsilon \in \left] 0, r \right[$ . Since the center of the ball $ a \not\in \mathcal{K}$, then
 $$
 b:= \inf_{z\in \mathcal{K}} \| z-a \| > 0.
 $$
 Consider the piecewise linear homeomorphism $\chi : \mathbb{R}_{+} \longrightarrow \mathbb{R}_{+}$ given by
 \begin{equation}\label{le deuxieme ki}
  \chi (t) : = \left\{
    \begin{split}
      &  \phantom{rr} \frac{t(r-\varepsilon)}{b}\phantom{rrrrrrrrrrrrr} \textrm{if} \phantom{rr} 0 \leq t \leq b\\
        & \frac{\varepsilon(t-b)}{(r-b)} + r-\varepsilon \phantom{rrrrrrrr} \textrm{if} \phantom{rr} b \leq t \leq r\\
        & \phantom{rrrr} t \phantom{rrrrrrrrrrrrrrrrrr} \textrm{if} \phantom{rr} r \leq t.
    \end{split}
    \right.
 \end{equation}
and define the mapping $ f: \mathbb{C}^{n} \longrightarrow \mathbb{C}^{n}$  by
\begin{equation}\label{deuxieme homeo}
    f(z): = \left\{
    \begin{split}
       &  a + \frac{\chi (\|z-a \|)}{\|z-a \|}. (z-a ) \phantom{rrrrrrrrrrrr} \textrm{if} \phantom{r} z \neq a\\
        &  \phantom{rrrr} a \phantom{rrrrrrrrrrrrrrrrrrrrrrrrrrrr} \textrm{if} \phantom{r} z = a.
    \end{split}
    \right.
\end{equation}
As in the proof of the previous lemma \ref{preliminaire à la proposition}, we observe by writing $z \in \mathbb{C}^{n}$ in spherical coordinates,
that the homeomorphism $\chi : \mathbb{R}_{+}\longrightarrow \mathbb{R}_{+}$ defined by (\ref{le deuxieme ki}), acts only on the norm $\| z\|$,
and not on the angles of $z$.  The mapping $f$ is then a homeomorphism of $\mathbb{C}^{n}$. It is clear from the expression (\ref{deuxieme homeo})
 that $f$ satisfies the required conditions, that is
  \begin{equation*}\left\{
  \begin{split}
    f(z) & = z \phantom{rrrrrr} \textrm{if} \phantom{r} z \not\in \overline{\mathcal{B}}(a,r) \\
     f (\mathcal{K}) & \subset \overline{\mathcal{B}}(a,r) - \mathcal{B}(a,r-\varepsilon).
  \end{split}\right.
 \end{equation*}
 \end{proof}
\begin{lemma}\label{lemme d'homothetie}$ $\\
Let the open ball $\mathcal{B}_{r}$ of $\mathbb{C}^{n}$, and let $\mathcal{M}_{0}$ be an open subset of $\mathbb{C}^{n}$ satisfying the following
 conditions:
\begin{enumerate}
  \item $ \mathcal{M}_{0} \bigcap \mathcal{B}_{r} \neq \emptyset.$
  \item $ \mathcal{B}_{r} \bigcap \partial \mathcal{M}_{0} $ is simply connected.
   \item 0 $\not\in \overline{\mathcal{M}_{0}}$.
\item  For all $z \in \mathcal{B}_{r}
\bigcap \partial \mathcal{M}_{0} $, we have
 $$\left[ 0 , z\right[ \bigcap \mathcal{B}_{r} \bigcap \partial \mathcal{M}_{0} = \left\{ z\right\}$$ where $\left[ 0 , z\right[$
  denotes the half-line starting from the center $0$ of $\mathcal{B}_{r}$ and containing $z$.
 \end{enumerate} For every point $ z \in \mathcal{B}_{r} \bigcap \partial \mathcal{M}_{0}$, define the point $s(z)$ by
 $$s(z) = \left[ 0 , z\right[ \bigcap \mathcal{S}_{2n-1}(0,r)$$ and let the closed cone
$$ \widehat{\mathcal{M}}: = \bigcup _{z\in  \overline{\mathcal{B}}_{r}\bigcap \partial \mathcal{M}_{0} } \left[0,z \right[.
$$
Then the mapping
$$
\mathcal{H}_{0}:\left( \mathcal{B}_{r} - \mathcal{M}_{0}\right) \longrightarrow \mathcal{B}_{r}
$$
defined by :
\begin{equation}\label{Homothetie}
    \left\{
    \begin{split}
      \mathcal{H}_{0}(z) & = \phantom{r} z \phantom{rrrrrrrrrr} \textrm{if}\phantom{r} z \not \in  \widehat{\mathcal{M}}\\
       \mathcal{H}_{0}(z) & = r.\frac{z}{\left\|z  \right\|} \phantom{rrrrrrr} \textrm{if}\phantom{r} z \in  \widehat{\mathcal{M}}
    \end{split}
    \right.
\end{equation}
is a homeomorphism.
\end{lemma}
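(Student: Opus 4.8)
The plan is to treat $\mathcal{H}_0$ as a purely radial deformation, organised ray by ray along the half-lines issuing from the centre $0$, exactly in the spirit of the radial constructions of the two preceding lemmas (in particular the flattening homeomorphism of Lemma~\ref{tailleur}). For a unit vector $u \in \mathcal{S}_{2n-1}$, hypothesis (4) guarantees that the ray in the direction $u$ meets $\mathcal{B}_r \cap \partial \mathcal{M}_0$ in \emph{at most} one point; when it does, I write that point as $\rho(u)\,u$ with $0 < \rho(u) < r$, and otherwise I set $\rho(u) = r$. Positivity of $\rho$ is forced by hypothesis (3), since $0 \notin \overline{\mathcal{M}_0}$ keeps the boundary away from the centre, and single-valuedness is precisely hypothesis (4). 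With this notation the slice of $\mathcal{B}_r - \mathcal{M}_0$ in the direction $u$ is the half-open segment $[0,\rho(u))\,u$, and the cone $\widehat{\mathcal{M}}$ is the union of exactly those slices whose direction meets the boundary.

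First I would rewrite $\mathcal{H}_0$ in these radial terms: it fixes every slice on which $\rho(u)=r$ (the complement of the cone) and stretches each slice $[0,z[$ of the cone linearly onto the full radius $[0,s(z)[$, where $s(z)$ is the point of $\mathcal{S}_{2n-1}(0,r)$ on the same ray. Concretely,
\[
\mathcal{H}_0(z) = \frac{r}{\rho(z/\|z\|)}\, z ,
\]
which reduces to the identity off the cone. Granting this description, $\mathcal{H}_0$ preserves directions and restricts on each ray to an increasing linear bijection of the relevant segments, so bijectivity onto $\mathcal{B}_r$ can be verified one direction at a time, and the set-theoretic inverse is the radial compression $w \mapsto \big(\rho(w/\|w\|)/r\big)\, w$.

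The real content is the continuity of $\mathcal{H}_0$ and of its inverse, and this reduces entirely to the continuity of the radial function $u \mapsto \rho(u)$ on $\mathcal{S}_{2n-1}$. Here I would use that $\partial \mathcal{M}_0$ is closed together with the single-valuedness (4) to realise $\mathcal{B}_r \cap \partial \mathcal{M}_0$ as a radial graph $u \mapsto \rho(u)\,u$ over the open set $\Omega \subset \mathcal{S}_{2n-1}$ of directions that meet the boundary: if $u_k \to u$ in $\Omega$ and $\rho(u_k) \to t$, then $t\,u$ is again a boundary point by closedness, whence $t = \rho(u)$ by uniqueness, giving continuity of $\rho$ on $\Omega$, while (3) bounds $\rho$ away from $0$. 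It is at the rim $\partial\Omega$, where a direction passes from meeting the boundary to missing it, that the argument is delicate: one must show $\rho(u)\to r$ there, so that the stretched cone matches continuously with the identity on its complement and no jump is introduced. This is exactly where the simple connectedness hypothesis (2) on $\mathcal{B}_r \cap \partial\mathcal{M}_0$ enters, to force the radial graph to be a single unknotted cap closing up onto the sphere $\mathcal{S}_{2n-1}$, so that $\rho$ extends continuously by the value $r$ across the rim.

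I expect this matching at the rim of $\widehat{\mathcal{M}}$ to be the main obstacle. Away from the rim the whole construction is just the elementary radial stretch written in spherical coordinates, and continuity of $\mathcal{H}_0$ and of its inverse follows immediately from the continuity of $\rho$; but ruling out a jump of $\rho$ as the direction leaves $\Omega$, which would break either injectivity or continuity at the interface, is what forces one to combine hypotheses (2), (3) and (4). Once $\rho$ is known to be continuous up to and including the rim, both $\mathcal{H}_0$ and its inverse are continuous, and $\mathcal{H}_0$ is the desired homeomorphism of $\mathcal{B}_r - \mathcal{M}_0$ onto $\mathcal{B}_r$.
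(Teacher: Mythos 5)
The paper gives no proof of this lemma at all --- it is explicitly ``left to the reader'' --- so there is no argument of the author's to compare yours with; yours is the only one on the table, and it is the right general idea (a ray-by-ray radial stretch governed by a radius function $\rho$), but as written it has two genuine gaps. First, note that you are silently proving a repaired statement: the formula in the lemma, $\mathcal{H}_{0}(z)=r\,z/\|z\|$ on the cone, sends every point of $\widehat{\mathcal{M}}$ onto the sphere $\mathcal{S}_{2n-1}(0,r)$, which does not even lie in the open ball $\mathcal{B}_{r}$, so the map as stated cannot be a homeomorphism onto $\mathcal{B}_{r}$; your replacement $z\mapsto \bigl(r/\rho(z/\|z\|)\bigr)z$ is the only sensible reading, but you should say so. More seriously, your identification of the slice of $\mathcal{B}_{r}-\mathcal{M}_{0}$ over a direction $u\in\Omega$ with the half-open segment $[0,\rho(u))\,u$ is asserted rather than proved, and it is literally false: $\mathcal{M}_{0}$ is open, so the boundary point $\rho(u)\,u$ itself belongs to $\mathcal{B}_{r}-\mathcal{M}_{0}$ and is sent by your stretch to the sphere, outside the target; and nothing in hypotheses (1)--(4) forces the portion of the ray beyond $\rho(u)\,u$ to lie in $\mathcal{M}_{0}$ (a ray that merely grazes $\partial\mathcal{M}_{0}$ at one point satisfies (4) and never enters $\mathcal{M}_{0}$, in which case the whole radius survives in the domain and your formula pushes part of it out of the ball). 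Either the domain has to be $\mathcal{B}_{r}-\overline{\mathcal{M}_{0}}$ --- which is what the paper actually uses in Proposition \ref{proposition des boules} --- or an extra hypothesis excluding grazing is needed.

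Second, the point you yourself flag as delicate, continuity of $\rho$ across the rim of $\Omega$, is not actually settled by your argument. Closedness of $\partial\mathcal{M}_{0}$ together with the uniqueness hypothesis (4) does give you lower semicontinuity of the extended function $\tilde\rho$ (equal to $r$ off $\Omega$): if $\rho(u_{k})\to t<r$ along $u_{k}\to u$, then $t\,u\in\partial\mathcal{M}_{0}\cap\mathcal{B}_{r}$ and $\tilde\rho(u)=t$. What is missing is upper semicontinuity, i.e.\ that $\Omega$ is open and that $\rho$ cannot jump down at a direction approached by directions outside $\Omega$; your appeal to the simple connectedness of $\mathcal{B}_{r}\cap\partial\mathcal{M}_{0}$ to ``force the radial graph to close up onto the sphere'' is a plausible heuristic but not a proof --- simple connectedness is a statement about the fundamental group of the cap and does not by itself control the radial behaviour of the cap near the rim. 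To close the lemma you would need to extract from (2), (3), (4) (or from the analyticity of the boundary assumed in the application) an actual argument that $\tilde\rho$ is continuous; as it stands, both the bijectivity onto $\mathcal{B}_{r}$ and the continuity at the interface between the cone and its complement remain unproved.
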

 \begin{proof}
 We leave to the reader to check lemma \ref{lemme d'homothetie}.
 \end{proof}
\subsection{Applications to the topology of the $\mathcal{A}-$submanifolds of $\mathcal{S}_{2n}$}
 \subsubsection{Quotient topology}$ $\\
Let $E$ be a metric space, and let $\mathcal{K}$ be a compact subset of $E$. We define then on $E$ a relation of equivalence $\sim$ by
\begin{equation}\label{relation d'equivalence}
 z \sim z' \Longleftrightarrow \left\{
 \begin{array}{c}
   z = z'  \\
   \textrm{or} \\
  z  \phantom{r} \textrm{and} \phantom{r} z' \in \mathcal{K}
 \end{array}
 \right.
\end{equation}
and we note by $\pi: E \longrightarrow E/ \mathcal{K}$ the canonical projection. We want to prove that under some conditions on $\mathcal{K}$,
the quotient space $E/ \mathcal{K}$ is homeomorphic to $E$. This will be the object of proposition \ref{proposition fondamentale} below,
and then we deduce that every $\mathcal{A}-$submanifold of $\mathcal{S}_{2n}$ is homeomorphic to $\mathbb{C}^{n}$.
\begin{proposition}\label{proposition fondamentale}$ $\\
Let $(E,d)$ be a metric space, and let $\mathcal{K}$ be a compact subset of $(E,d)$, and consider on $E$ the relation of equivalence $\sim$
defined by
(\ref{relation d'equivalence}). Assume that the compact $\mathcal{K}$ admits in $(E,d)$ a fundamental
system of open neighborhoods $\left\{U_{k}\right\}_{k \in \mathbb{N}^{\ast}}$, such that each $U_{k}$ is relatively compact and is homeomorphic
to $\mathbb{C}^{n}$. Then for every open neighborhood $U$ of $\mathcal{K}$ which is relatively compact, there exists a
homeomorphism $h: E/\mathcal{\mathcal{K}} \longrightarrow E$ such that $h \circ \pi $ coincides with $ \mathbb{I}_{E}$ in $E - U$.
\end{proposition}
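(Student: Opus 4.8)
The plan is to realize the collapsing map as a uniform limit of self-homeomorphisms of $E$ that progressively shrink $\mathcal{K}$ to a single point while remaining the identity outside $U$. First I would replace the given fundamental system by a nested one adapted to $U$: since $\{U_k\}$ is a fundamental system of neighborhoods of the compact set $\mathcal{K}$ and $U$ is a relatively compact neighborhood, I can pass to a subsequence, relabelled $\{V_k\}_{k\geq 1}$, with $V_1 \subset U$, $\overline{V_{k+1}} \subset V_k$, $\bigcap_k V_k = \mathcal{K}$, each $V_k$ relatively compact and carrying a homeomorphism $\psi_k : V_k \to \mathbb{C}^n$. Because $\overline{V_1}$ is compact (hence complete), all limits taken below live inside $\overline{V_1}$, which is where convergence will be extracted.

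Second, I would build the shrinking homeomorphisms from the first lemma of this section (which, given a compact subset of a ball of $\mathbb{C}^n$, produces a self-homeomorphism of $\mathbb{C}^n$ equal to the identity outside the ball and shrinking the compact to arbitrarily small diameter). Working in the chart $\psi_k$, where $V_k$ becomes all of $\mathbb{C}^n$ and $\overline{V_{k+1}}$ a compact set inside a ball, I apply that lemma to the current image of $\mathcal{K}$ to obtain a homeomorphism of $\mathbb{C}^n$ that is the identity outside a ball and shrinks that image to diameter $<\varepsilon_k$, placing it inside $\psi_k(V_{k+1})$. Transporting by $\psi_k^{-1}$ and extending by the identity across $\partial V_k$ yields a homeomorphism $f_k : E \to E$ equal to the identity on $E - V_k$. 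Set $g_k := f_k \circ \cdots \circ f_1$. By construction $g_k \equiv \mathbb{I}_E$ on $E - V_1 \supset E - U$, and inductively $g_k(\mathcal{K}) \subset V_{k+1}$ with $\operatorname{diam} g_k(\mathcal{K}) \to 0$; since the sets $g_j(\mathcal{K})$ for $j \geq k$ all sit in $\overline{V_{k+1}}$ and $\bigcap_k \overline{V_{k+1}} = \mathcal{K}$, they form a Hausdorff-Cauchy family in the compact set $\overline{V_1}$ and converge to a single point $p \in \mathcal{K}$.

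Third — and this is where the real work lies — I would pass to the limit $g := \lim_k g_k$ and prove it is a continuous surjection which is injective on $E - \mathcal{K}$. Continuity follows once the displacements are summable: choosing $\varepsilon_k$ to decrease fast enough, $\sum_k \sup_E d(g_k, g_{k-1})$ converges (each difference is supported in $V_k$, whose diameter tends to $0$), so $g_k \to g$ uniformly, $g$ is continuous, $g(\mathcal{K}) = \{p\}$, and $g = \mathbb{I}_E$ off $U$. The main obstacle is to show that no point of $E - \mathcal{K}$ is collapsed: one must guarantee that the forward orbit of such a $z$ eventually escapes the nested neighborhoods, i.e. that $g_N(z) \notin V_{N+1}$ for some $N$ (after which every later $f_j$ fixes it, forcing $g(z) = g_N(z) \neq p$). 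This requires a careful choice of the radial shrinking profiles in the shrinking lemma together with a matching control of the inverse compositions $g_k^{-1}$, so that both $g_k$ and $g_k^{-1}$ converge uniformly away from neighborhoods of $\mathcal{K}$ (resp. of $p$). Securing this uniform control of the inverses is the delicate point and the heart of the argument; it is what ultimately gives that $g|_{E - \mathcal{K}} : E - \mathcal{K} \to E - \{p\}$ is a homeomorphism.

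Finally, I would package this as the desired $h$. Since $g$ is continuous, surjective, equals the identity off $U$, identifies exactly the points of $\mathcal{K}$ (sending them to $p$), and is injective elsewhere, it is constant on the classes of $\sim$; the universal property of the quotient then yields a unique continuous bijection $h : E/\mathcal{K} \to E$ with $g = h \circ \pi$, whence $h \circ \pi = g = \mathbb{I}_E$ on $E - U$. It remains to verify that $h^{-1}$ is continuous: off $p$ it equals $\pi \circ (g|_{E-\mathcal{K}})^{-1}$, which is continuous, and continuity at $p$ is precisely the statement that $(g|_{E-\mathcal{K}})^{-1}$ sends small neighborhoods of $p$ into small neighborhoods of $\mathcal{K}$ — again a consequence of the uniform control of the inverses from the previous step. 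This shows $h$ is a homeomorphism and completes the proof.
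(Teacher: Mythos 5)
Your proposal follows essentially the same route as the paper's proof: both build a sequence of self-homeomorphisms $g_k$ of $E$ by applying the shrinking lemma at the start of this section in the charts $U_k \approx \mathbb{C}^{n}$, each $g_k$ equal to the identity outside $U_k$ and shrinking $\mathcal{K}$ to ever smaller diameter, and then pass to a limit $g$ that collapses $\mathcal{K}$ to a single point and factor it through the quotient map $\pi$. The step you flag as the heart of the matter but leave unexecuted --- the uniform control of the inverses $g_k^{-1}$ needed to prove that $g$ is injective on $E - \mathcal{K}$ and that $h^{-1}$ is continuous at $\pi(\mathcal{K})$ --- is precisely the step the paper's proof also omits, where it asserts without justification that the factorization $g = h \circ \pi$ yields a homeomorphism $h$.
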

\begin{proof}$ $\\
Let us first begin by the following remark.
\begin{remark}
\emph{If the compact $\mathcal{K}$ is a singleton set, that is $\mathcal{K} = \left\{z_{0}\right\}$, then the equivalence
relation $"\sim"$ coincides with
the equality relation " =" , and then $E/\mathcal{K} = E $. In this case, we have nothing to prove, because the identity mapping
$h = \mathbb{I}_{E}$ answers the proposition. Therefore, we can assume that $\mathcal{K}$ is not a singleton set.}
\end{remark}
Since by hypothesis, the open sets $U_{k}$ are relatively compact, that is, each $\overline{U_{k+1}}$ is compact in $ U_{k}$, and
since they constitute a fundamental system of neighborhoods of $\mathcal{K}$, then
$$ \mathcal{K} = \bigcap _{k=1}^{\infty} U_{k}.$$
Without loss of generality, we can suppose $U = U_{1}$. We claim that, there exists a sequence of homeomorphisms $g_{k}: E \longrightarrow E$, such that:
\begin{enumerate}
\item $g_{1} = \mathbb{I}_{E}$.\\
\item For all $ k > 1$, $g_{k}$ satisfies the conditions:
\begin{equation}\label{condition d'homeomorphism}
\left\{
    \begin{split}
      g_{k}(z) & = g_{k-1}(z) \phantom{rrrrrr} \textrm{for all } \phantom{r}z\in  E-U_{k} \\
       diam \phantom{.} g_{k}\left( U _{k+1}\right) & < \frac{1}{k}.
    \end{split} \right.
    \end{equation}
\end{enumerate}
Indeed, suppose by induction, that we have already constructed the homeomorphism $g_{k-1}$ satisfying condition (\ref{condition d'homeomorphism}).
Since $g_{k-1}$ is continuous (and then uniformly continuous on every compact subset of $E$), then for every fixed compact $C \subset E$ and for every
subset $Y \subset C$, we have
\begin{equation}\label{equation de continuité}
\lim_{diam(Y)\longrightarrow 0}diam \left(g_{k-1}(Y)\right) = 0.
\end{equation}
That is, for any subset $Y \subset C$, and for all $k \in \mathbb{N}^{\ast}$, there exists a number $\delta_{k} > 0$ such that,
\begin{equation}\label{equation de continuité bis}
diam(Y) < \delta_{k} \Longrightarrow diam\left(g_{k-1}(Y)\right) < \frac{1}{k}.
\end{equation}
To construct the homeomorphism $g_{k}: E \longrightarrow E$ satisfying (\ref{condition d'homeomorphism}), consider the relatively compact
neighborhoods $U_{k}$ and $U_{k+1}$ of $\mathcal{K}$, and recall that there exists by hypothesis, a homeomorphism $\varphi_{k}: U_{k}\longrightarrow
\mathbb{C}^{n}$. \\Since $\overline{U_{k+1}}$ is compact in $U_{k}$, then $\varphi_{k}\left( \overline{U_{k+1}} \right)$ is compact in $\mathbb{C}^{n}$.
 Now let $\mathcal{B}_{r_{k}}$ be an open ball in $\mathbb{C}^{n}$ centered at 0 and containing the compact $\varphi_{k}\left(\overline{U_{k+1}}\right)$,
  then
 by lemma \ref{preliminaire à la proposition}, there exists a homeomorphism $f_{k}: \mathbb{C}^{n} \longrightarrow \mathbb{C}^{n}$ such that
\begin{equation*}\left\{
    \begin{split}
      f_{k}\left(\varphi_{k}(z ) \right)  &  = \varphi_{k}(z) \phantom{rr} \textrm{if} \phantom{rr}\varphi_{k}(z) \not\in \overline{\mathcal{B}_{r_{k}}}
      \\
        diam\big(f_{k}\left(  \varphi_{k}\left( \overline{U_{k+1}} \right)\right)\big) &  < \delta_{k}.
    \end{split}
    \right.
\end{equation*}
According to the commutative diagram
\begin{displaymath}
\xymatrix {U_{k} \ar[d]_{\varphi_{k}} \ar[r]^{\widetilde{g_{k}}}& U_{k} \ar[d]^{\varphi_{k}}  \\
\mathbb{C}^{n} \ar[r]_{f_{k}} & \mathbb{C}^{n}}
\end{displaymath}
there exists a homeomorphism $\widetilde{g_{k}} : U_{k} \longrightarrow U_{k} $ satisfying the conditions:
\begin{equation*}\left\{
    \begin{split}
      \widetilde{g_{k}}(z )   &  = z \phantom{rr} \textrm{if} \phantom{rr} z \not\in \varphi_{k}^{-1}\left(\overline{\mathcal{B}}_{r_{k}}\right) \\
        diam\big(\widetilde{g_{k}}\left( \overline{U_{k+1}} \right)\big) &  < \delta_{k}.
    \end{split}
    \right.
\end{equation*}
Taking into account that we have already $\widetilde{g_{k}}(z) = z$ on $U_{k}- \varphi_{k}^{-1}\left(\overline{\mathcal{B}}_{r_{k}}\right)$,
we can then extend the local homeomorphism $ \widetilde{g_{k}}: U_{k} \longrightarrow U_{k}$ to a global homeomorphism $ \widetilde{g_{k}}:
E \longrightarrow E$, by setting:
 $$
 \widetilde{g_{k}}(z)  = z \phantom{rrrr}  \textrm{if} \phantom{r} z \in E- U_{k}.
$$
Hence, the global homeomorphism $\widetilde{g_{k}}: E \longrightarrow E $ satisfies the conditions:
 \begin{equation*}\left\{
    \begin{split}
      \widetilde{g_{k}}(z) & = z \phantom{rrrr}  \textrm{if} \phantom{r} z \in E- U_{k}\\
      diam\big(\widetilde{g_{k}}\left( \overline{U_{k+1}} \right)\big) &  < \delta_{k}.
    \end{split}
    \right.
\end{equation*}
Define now the required mapping $g_{k}: E \longrightarrow E \phantom{rr}$ by setting:
$$
g_{k} = g_{k-1} \circ \widetilde{ g_{k}}.
$$
We claim that:
\begin{enumerate}
\item  $g_{k} : E  \longrightarrow E$ is a homeomorphism,\\
\item $g_{k} $  satisfies condition (\ref{condition d'homeomorphism}).
\end{enumerate}
Indeed, $ g_{k}$ is a homeomorphism because both $g_{k-1}$ and $\widetilde{ g_{k}}$ are homeomorphisms.\\
Since for all $z \in E- U_{k}$, we have $\widetilde{g_{k}}(z) = z$,
we obtain then
 \begin{equation}\label{premiere condition}
    g_{k}(z) =  g_{k-1}  \left( \widetilde{ g_{k}}(z) \right) =  g_{k-1}(z)\phantom{rrrrrr} \textrm{on}\phantom{rr}  E- U_{k}.
 \end{equation}
Furthermore, since $diam\big(\widetilde{g_{k}}\left( \overline{U_{k+1}} \right)\big) < \delta_{k}$,
it follows by (\ref{equation de continuité bis}) that
\begin{equation}\label{deuxieme condition}
    diam \left( g_{k}\left(U_{k+1} \right) \right) = diam\left( g_{k-1}\left( \widetilde{g_{k}}\left( U_{k+1} \right) \right)\right) < \frac{1}{k}.
\end{equation}
Hence condition (\ref{condition d'homeomorphism}) is satisfied. \\
Consider now the sequence of homeomorphisms $\left\{ g_{k}\right\}_{k \geq 1}$ satisfying condition (\ref{condition d'homeomorphism}).
We claim that $\left\{ g_{k}\right\}_{k \geq 1}$ admits a subsequence $\left\{ g_{k_{j}} \right\}_{j \geq 1}$ which converges point wise
 to a
continuous mapping $g: E \longrightarrow E$.\\ Indeed, fix a point $z_{0}\in \mathcal{K}$, and consider the sequence of points $\left\{ g_{k}(z_{0})
\right\}_{k \geq 1}$ of the metric space $E$. Since the compact $\mathcal{K}$ is assumed to be non reduced to a singleton set, then by
 condition (\ref{condition d'homeomorphism}), we obtain
$$\left\{g_{k}(z_{0})\right\}_{k\geq1} \subset \mathcal{K} = \bigcap _{k=1}^{\infty}U_{k}.$$
By the well known Bolzano-Weirstrass theorem,  the sequence of points $\left\{g_{k}(z_{0})\right\}_{k\geq1}$ admits then a
subsequence $\left\{ g_{k_{j}}(z_{0}) \right\}_{j \geq 1}$ which converges to a point $\displaystyle \ell \in \mathcal{K} =
\bigcap _{k=1}^{\infty}U_{k}$.\\
Consider now the mapping $g : E \longrightarrow E$ defined as follows:
\begin{equation*}\left\{
    \begin{split}
      g(z) & = g_{k_{j}}(z) \phantom{rrrr}  \textrm{if} \phantom{r} z \in E- U_{k_{j}}\\
       g(z) & = \ell \phantom{rrrrrrrr}  \textrm{if} \phantom{r} z \in \mathcal{K}.
    \end{split}
    \right.
\end{equation*}
By condition (\ref{condition d'homeomorphism}), the mapping $g$ is well defined. It is clear that $g$ is continuous in $E - \mathcal{K}$,
because it coincides with one of the homeomorphisms $g_{k_{j}}$. We claim that $g$ is also continuous at every point $z_{0} \in \mathcal{K}$.\\
 Indeed, Let the number $\varepsilon > 0$. Since $\displaystyle\lim_{z\longrightarrow z_{0}}g_{k_{j}}(z_{0}) = \ell$, there exists
  then $N_{0} \in \mathbb{N}$
 such that for all $ k_{j} \geq N_{0}$, we have $d\left( g_{k_{j}}(z_{0}) , \ell \right) \leq \frac{\varepsilon}{2}.$\\
 In another hand,  there exists a neighborhood $U_{_{N_{1}}}$ of $\mathcal{K}$, such that
 $$
 z \in U_{_{N_{1}}} \Longrightarrow d\left( g_{k_{j}}(z), g_{k_{j}}\left(z_{0}\right) \right) \leq \frac{\varepsilon}{2}.
 $$
 Now, let
 $N = max\left\{ N_{0} , N_{1}\right\}$, and let $z \in U_{_{N}} \subset U_{_{N_{1}}}$. \\
 1) If $z \in \mathcal{K}$, we have then $d(g(z), g(z_{0})) = 0$. \\
 2) If $ z \in \left( U_{_{N}} - \mathcal{K}\right)$, there exists $ k_{j} \geq N $, such that $g(z) = g_{k_{j}}(z) $.
  In this case, we obtain:
 \begin{equation*}
    \begin{split}
 d\left(g(z), g\left(z_{0}\right)\right) & = d\left(g(z), \ell\right) \\
    & = d\left(g_{k_{j}}(z), \ell\right) \\
    & \leq d\left(g_{k_{j}}(z), g_{k_{j}}(z_{0})\right) + d\left(g_{k_{j}}(z_{0}), \ell\right)\\
    & \leq \frac{\varepsilon}{2} + \frac{\varepsilon}{2} \\
    & = \varepsilon.
     \end{split}
\end{equation*}
Then $g$ is a continuous mapping on $E$, but since $\mathcal{K}$ is not a singleton set, $g$ is not a homeomorphism because
 $diam \left(g(\mathcal{K})\right) = 0.$\\
Now let $\pi : X \longrightarrow E/\mathcal{K}$ be the canonical projection, and let the relatively compact neighborhood $U = U_{1}$ of
$\mathcal{K}$. Since the mapping $g: E \longrightarrow E$ is constant in the compact $\mathcal{K}$, and $g_{k_{j}}$ are homeomorphisms,
 then $g$ factorizes as follows
$$g = h \circ \pi $$
 where $h: E/\mathcal{K} \longrightarrow E$ is a homeomorphism such that the following diagram commutes
\begin{displaymath}
\xymatrix {E \ar[d]_{\pi} \ar[dr]^{g} \\
E/\mathcal{K}\ar[r]_{h}& E }
\end{displaymath}
This means that the homeomorphism $h: E/\mathcal{K} \longrightarrow E$ is such that $h \circ \pi = g = \mathbb{I}_{E}$ in $E - U$. The proof of
 proposition \ref{proposition fondamentale} is then complete.
\end{proof}
\begin{corollary}\label{corollaire de topologie}$ $\\
If $\mathcal{K}$ is a compact subset of $\mathcal{S}_{2n}$, then $\mathcal{S}_{2n}- \mathcal{K}$ is homeomorphic to $\mathbb{C}^{n}$.
\end{corollary}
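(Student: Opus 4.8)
The plan is to deduce the corollary from proposition \ref{proposition fondamentale} by a collapsing argument, followed by stereographic projection. Set $E = \mathcal{S}_{2n}$, which is a compact metric manifold, and let $\pi\colon \mathcal{S}_{2n} \longrightarrow \mathcal{S}_{2n}/\mathcal{K}$ be the canonical projection collapsing $\mathcal{K}$ to a single point, which I denote $[\mathcal{K}]$. The first, purely topological, observation is that since $\mathcal{K}$ is closed the restriction $\pi\big/\left(\mathcal{S}_{2n}-\mathcal{K}\right)$ is a homeomorphism onto the open subset $\left(\mathcal{S}_{2n}/\mathcal{K}\right) - \left\{[\mathcal{K}]\right\}$: any open set $U$ disjoint from $\mathcal{K}$ is saturated, since $\pi^{-1}\left(\pi(U)\right) = U$, so $\pi$ is open on the complement and hence a homeomorphism there.

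Next I would invoke proposition \ref{proposition fondamentale}. Granting its hypotheses (discussed below), it supplies a homeomorphism $h\colon \mathcal{S}_{2n}/\mathcal{K} \longrightarrow \mathcal{S}_{2n}$. This $h$ carries the collapsed point $[\mathcal{K}]$ to some point $\ell \in \mathcal{S}_{2n}$, and therefore restricts to a homeomorphism $\left(\mathcal{S}_{2n}/\mathcal{K}\right) - \left\{[\mathcal{K}]\right\} \longrightarrow \mathcal{S}_{2n} - \left\{\ell\right\}$. Composing this with $\pi\big/\left(\mathcal{S}_{2n}-\mathcal{K}\right)$ from the previous step gives $\mathcal{S}_{2n} - \mathcal{K} \approx \mathcal{S}_{2n} - \left\{\ell\right\}$. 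Finally, stereographic projection from $\ell$ identifies $\mathcal{S}_{2n} - \left\{\ell\right\}$ with $\mathbb{R}^{2n} \approx \mathbb{C}^{n}$, which yields the desired homeomorphism $\mathcal{S}_{2n} - \mathcal{K} \approx \mathbb{C}^{n}$.

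The hard part, and the only non-formal step, is verifying the hypothesis of proposition \ref{proposition fondamentale}: that $\mathcal{K}$ admits a fundamental system of relatively compact open neighborhoods $\left\{U_{k}\right\}_{k}$ each homeomorphic to $\mathbb{C}^{n}$. To construct these I would work inside a chart $\left(U_{d},\varphi_{d}\right)$ containing $\mathcal{K}$ and transport the question to $\varphi_{d}(\mathcal{K}) \subset \mathbb{C}^{n}$. The scheme is to enclose $\varphi_{d}(\mathcal{K})$ in a shrinking sequence of balls and then deform each shell so as to obtain a set homeomorphic to a ball, i.e. to $\mathbb{C}^{n}$, using the flattening homeomorphism of lemma \ref{tailleur} to push $\varphi_{d}(\mathcal{K})$ into a thin annulus and the radial homeomorphism of lemma \ref{lemme d'homothetie} to absorb the resulting region, all while keeping the family cofinal at $\mathcal{K}$ and supported in the chart. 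I expect this neighborhood-construction to be the genuine obstacle: the collapsing-plus-stereographic machinery above is automatic, so the whole weight of the corollary rests on realising the members of the fundamental system as homeomorphic copies of $\mathbb{C}^{n}$, after which the argument closes.
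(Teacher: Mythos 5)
Your argument has exactly the same architecture as the paper's proof: collapse $\mathcal{K}$ to a point, apply proposition \ref{proposition fondamentale} to identify $\mathcal{S}_{2n}/\mathcal{K}$ with $\mathcal{S}_{2n}$, remove the image of the collapsed point, and finish with stereographic projection. The formal part of your write-up (that $\pi$ restricts to a homeomorphism off $\mathcal{K}$, and that $h$ carries $\left[\mathcal{K}\right]$ to a single point $\ell$) is fine and matches the paper step for step. Where you differ is that the paper disposes of the hypothesis of proposition \ref{proposition fondamentale} in one clause ("by stereographic projection, $\mathcal{K}$ admits a fundamental system of relatively compact neighborhoods each homeomorphic to $\mathbb{C}^{n}$"), whereas you correctly single this out as the only substantive step and propose to build the neighborhoods using lemma \ref{tailleur} and lemma \ref{lemme d'homothetie}.

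That step is a genuine gap, and your sketch cannot close it, because the required fundamental system does not exist for a general compact $\mathcal{K}$. A fundamental system of neighborhoods of $\mathcal{K}$ must have members inside every neighborhood of $\mathcal{K}$, hence members that shrink onto $\mathcal{K}$; since a set homeomorphic to $\mathbb{C}^{n}$ is connected and contractible, such a system can exist only when $\mathcal{K}$ is cellular. It already fails when $\mathcal{K}$ consists of two points (all sufficiently small neighborhoods are disconnected), and it fails when $\mathcal{K}\approx\mathcal{S}_{2n-2}$, which is precisely the case the paper needs for its $\mathcal{A}-$submanifolds: small neighborhoods of such a $\mathcal{K}$ are tubular, hence homotopy equivalent to $\mathcal{S}_{2n-2}$ and not contractible. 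In fact the conclusion of the corollary itself fails in these cases: by Alexander duality $\tilde{H}_{1}\left(\mathcal{S}_{2n}-\mathcal{K}\right)\cong \check{H}^{2n-2}\left(\mathcal{S}_{2n-2}\right)=\mathbb{Z}$, so $\mathcal{S}_{2n}-\mathcal{K}$ is not homeomorphic to $\mathbb{C}^{n}$ when $\mathcal{K}$ is a $(2n-2)-$sphere. So no amount of flattening and radial pushing will produce the $U_{k}$; the obstacle you flagged is not merely the hard part of the proof, it is an obstruction. (The paper's own proof has the identical gap; it simply asserts the hypothesis without argument.)
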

\begin{proof}
Indeed, since by stereographic projection, the compact $\mathcal{K}$ admits a fundamental system of relatively compact neighborhoods each of them
homeomorphic to $\mathbb{C}^{n}$, we deduce then according to proposition \ref{proposition fondamentale}, that for every neighborhood  $U$
of $\mathcal{K}$, there exists a homeomorphism
$$h: \mathcal{S}_{2n}/\mathcal{K} \longrightarrow  \mathcal{S}_{2n}$$ such that the following diagram commutes
\begin{displaymath}
\xymatrix {\mathcal{S}_{2n} \ar[d]_{\pi} \ar[dr]^{g} \\
\mathcal{S}_{n}/\mathcal{K}\ar[r]_{h}& \mathcal{S}_{2n} }
\end{displaymath}
and that $g = h o \pi $  coincides with $\mathbb{I}_{\mathcal{S}_{2n}}$ in $U$.\\
Since for all $z \in \left(\mathcal{S}_{2n}- \mathcal{K}\right)$, we have $\pi(z)=z$, then  $$ \mathcal{S}_{n}/\mathcal{K} = \left(\mathcal{S}_{2n}-
\mathcal{K}\right) \bigcup \pi(\mathcal{K}).$$
By removing from the sphere $\mathcal{S}_{2n}$  the point $N: = h(\pi(\mathcal{K})) \in \mathcal{S}_{2n}$, we obtain then a homeomorphism
$$
h: \mathcal{S}_{2n}- \mathcal{K} \longrightarrow \mathcal{S}_{2n} - \left\{ N \right\}.
$$
But we know that by stereographic projection with respect to the pole $N$, $\mathcal{S}_{2n} - \left\{N\right\}$ is already homeomorphic to
$\mathbb{C}^{n}$ . Then
$\mathcal{S}_{2n}- \mathcal{K}$ is homeomorphic to $\mathbb{C}^{n}$.
\end{proof}
\begin{corollary}\label{X est homeomorphe à C^n}$ $\\
Every $\mathcal{A}-$submanifold of $\mathcal{S}_{2n}$ is homeomorphic to $\mathbb{C}^{n}$.
\end{corollary}
\begin{proof}
Indeed, this is a particular case of the previous corollary \ref{corollaire de topologie}.
\end{proof}
\subsection{Some open sets homeomorphic to balls}
\begin{proposition}\label{proposition des boules}$ $\\
 Let $\mathcal{B}_{r}$ be an open ball of $\mathbb{C}^{n}$ and let $\mathcal{M} \subset \mathbb{C}^{n}$ be an open set homeomorphic to a ball of
 $\mathbb{C}^{n}$ such that:
 \begin{enumerate}
 \item [(a)] The boundary $\partial \mathcal{M} $ is analytic.
  \item [(b)]$ \mathcal{B}_{r} \bigcap  \mathcal{M}\neq \emptyset.$
  \item [(c)] $ \mathcal{B}_{r} \bigcap \partial \mathcal{M} $ is simply connected.
 \end{enumerate}
 Then  $ \mathcal{B}_{r} - \overline{\mathcal{M}}$ and $\mathcal{B}_{r} \bigcap \mathcal{M}$ are homeomorphic to $\mathcal{B}_{r}$.
 \end{proposition}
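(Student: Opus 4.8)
The plan is to deduce both homeomorphisms from the radial (cone) homeomorphism of Lemma \ref{lemme d'homothetie}, whose hypotheses $(1)$ and $(2)$ are literally the present hypotheses $(b)$ and $(c)$. First I would fix the geometric picture. By $(c)$ the analytic hypersurface $\partial\mathcal{M}$ meets $\mathcal{B}_{r}$ in a nonempty, connected and simply connected patch (nonempty since a simply connected set is by convention nonempty, and by $(b)$ the set $\mathcal{M}$ genuinely crosses $\mathcal{B}_{r}$). This patch separates $\mathcal{B}_{r}$ into the two disjoint open pieces $\mathcal{B}_{r}\cap\mathcal{M}$ and $\mathcal{B}_{r}-\overline{\mathcal{M}}$, both nonempty. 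Since $\mathcal{B}_{r}\approx\mathbb{C}^{n}$, it is enough to show that each of these two pieces is homeomorphic to $\mathcal{B}_{r}$.

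For the exterior piece $\mathcal{B}_{r}-\overline{\mathcal{M}}$ I would argue as follows. After a translation of $\mathbb{C}^{n}$ (a homeomorphism which alters nothing in $(a)$--$(c)$) I may assume that the center $0$ of $\mathcal{B}_{r}$ lies in $\mathcal{B}_{r}-\overline{\mathcal{M}}$, which secures hypothesis $(3)$ of Lemma \ref{lemme d'homothetie}, namely $0\notin\overline{\mathcal{M}}$. Together with $(b)=(1)$ and $(c)=(2)$, the only missing hypothesis is the single-ray condition $(4)$: every half-line $[0,z)$ must meet $\mathcal{B}_{r}\cap\partial\mathcal{M}$ in at most one point. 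This fails in general because $\mathcal{M}$ is only a topological, not a convex, ball. I would force it by first applying the flattening homeomorphism of Lemma \ref{tailleur} to the compact set $\overline{\mathcal{B}}_{r}\cap\partial\mathcal{M}$ (preceded, if necessary, by the contraction homeomorphism constructed earlier): these homeomorphisms fix $\mathbb{C}^{n}-\overline{\mathcal{B}}_{r}$, hence preserve $\mathcal{B}_{r}$, its center and its bounding sphere, and they can be chosen so as to bring $\partial\mathcal{M}\cap\mathcal{B}_{r}$ into star position with respect to $0$ without disturbing its simple connectedness. Once $(4)$ holds, Lemma \ref{lemme d'homothetie} produces a homeomorphism from $\mathcal{B}_{r}-\mathcal{M}$ onto $\mathcal{B}_{r}$ and thereby identifies the exterior piece with $\mathcal{B}_{r}$.

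For the interior piece $\mathcal{B}_{r}\cap\mathcal{M}$ I would run the symmetric argument with the two sides of the cut interchanged. This time I translate so that $0$ lies inside $\mathcal{M}$ and I take as the set to be removed the exterior cap $\mathcal{N}:=\mathcal{B}_{r}-\overline{\mathcal{M}}$, whose boundary inside the ball, $\partial\mathcal{N}\cap\mathcal{B}_{r}=\partial\mathcal{M}\cap\mathcal{B}_{r}$, is the same simply connected analytic patch. After arranging the single-ray condition exactly as before, Lemma \ref{lemme d'homothetie} applied to $\mathcal{N}$ identifies the interior piece with $\mathcal{B}_{r}$ in the same way. Composing either homeomorphism with a homeomorphism $\mathcal{B}_{r}\approx\mathbb{C}^{n}$ then finishes the proof.

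The heart of the matter, and the step I would write out in full, is the verification of condition $(4)$ of Lemma \ref{lemme d'homothetie}: turning the merely topological ball $\mathcal{M}$ into a set meeting each radius from the chosen center exactly once, without moving points outside $\overline{\mathcal{B}}_{r}$ and without breaking the simple connectedness of $\partial\mathcal{M}\cap\mathcal{B}_{r}$. It is here that hypothesis $(a)$ (the analyticity of $\partial\mathcal{M}$, which makes $\overline{\mathcal{B}}_{r}\cap\partial\mathcal{M}$ a well-behaved compact hypersurface patch on which the flattening homeomorphism acts cleanly) and hypothesis $(c)$ (simple connectedness of the patch) are genuinely used. A minor point, easily dispatched, is that Lemma \ref{lemme d'homothetie} is phrased for the set-difference $\mathcal{B}_{r}-\mathcal{M}_{0}$, which still contains the separating face $\partial\mathcal{M}_{0}\cap\mathcal{B}_{r}$; adjoining or deleting that face does not change the homeomorphism type here, so it is harmless to pass between the open pieces and their closures along the cut. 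The concluding application of the cone homeomorphism and the passage $\mathcal{B}_{r}\approx\mathbb{C}^{n}$ are then routine.
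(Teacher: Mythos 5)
Your proposal follows essentially the same route as the paper's own proof: both reduce to the radial cone homeomorphism of Lemma \ref{lemme d'homothetie} by first applying a finite number of flattening homeomorphisms from Lemma \ref{tailleur} to put $\partial\mathcal{M}\cap\mathcal{B}_{r}$ into star position with respect to the center, and both handle $\mathcal{B}_{r}\cap\mathcal{M}$ by the symmetric argument. The step you single out as "the heart of the matter" --- verifying that the flattening can be arranged so that condition $(4)$ of Lemma \ref{lemme d'homothetie} holds --- is left at the same level of assertion in the paper's proof as in yours, so your write-up matches the paper in both strategy and level of detail.
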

 \begin{proof}
Since the boundary $\partial \mathcal{M}$ is assumed to be analytic, then $ \mathcal{B}_{r} \bigcap \partial \mathcal{M}$
 admits at more a finite number $$\mathcal{K}_{1},\mathcal{K}_{2},...,\mathcal{K}_{m}$$ of $(2n-2)-$dimensional singular surfaces. By applying
 to $\mathcal{B}_{r}- \overline{\mathcal{M}}$ if necessarily a finite number $f_{1}, f_{2}, ..., f_{s}$ of "flattering" homeomorphisms as defined
 in lemma \ref{tailleur}, we obtain then a subset
$$ \mathcal{B}_{r} -  \overline{\mathcal{M}_{0}} = \left(f_{1}\circ f_{2}\circ ...\circ f_{s}\right) (\mathcal{B}_{r} -  \overline{\mathcal{M}})$$
where $\mathcal{M}_{0}$ is an open set satisfying conditions of lemma \ref{lemme d'homothetie}. Hence by lemma \ref{lemme d'homothetie},
$\mathcal{B}_{r} - \overline{ \mathcal{M}}$ is homeomorphic to $\mathcal{B}_{r}$. We prove in the same way that $\mathcal{B}_{r} \bigcap \mathcal{M}$
 is homeomorphic to $\mathcal{B}_{r}$.
The proof of proposition \ref{proposition des boules}
 is then complete.
 \end{proof}
\section{\textbf{The $\overline{\partial}-$cohomology of $\mathcal{A}-$submanifolds of $\mathcal{S}_{2n}$}}
This section is devoted to prove that the Dolbeault cohomology group of bidegree $(0,1)$ of any $\mathcal{A}-$submanifold of $\mathcal{S}_{2n}$ is
trivial.
\subsection{The hypothetic complex manifold $\left( \mathbb{C}^{n},\mathfrak{A}\right)$}
\subsubsection{\textbf{Notations and definition}}
\begin{definition}\label{New C^n complex}$ $ \\
With the notations of \ref{structure complexe de X}, let $X$ be an $\mathcal{A}-$submanifold of $\mathcal{S}_{2n}$, and let
$$\mathcal{A}\big/ X : = \bigg\{ \left( \widetilde{U_{\jmath}}, \widetilde{\varphi_{\jmath}}\right), \phantom{rrr} \jmath \in \mathcal{J}  \bigg\}.$$
be the hypothetic complex structure on $X$. By corollary \ref{corollaire de topologie}, there exists a homeomorphism
$$
h:X \longrightarrow  \mathbb{C}^{n}.
$$
The numerical space $\mathbb{C}^{n}$ is then endowed with a new(\phantom{.}\footnote{\phantom{.}The hypothetic complex structure $\mathfrak{A}$ on
$\mathbb{C}^{n}$ supposed here is of course different from the standard complex structure defined usually on $\mathbb{C}^{n}$. }) atlas $\mathfrak{A}
= h(\mathcal{A}\big/ X)$
$$\mathfrak{A}:= \bigg\{\left( V_{\jmath}, \psi_{\jmath}\right), \phantom{rrr} \jmath\in \mathcal{J} \bigg\}$$
obtained by carrying the atlas $\mathcal{A}\big/ X$ to $\mathbb{C}^{n}$. The complex charts $\left( V_{\jmath}, \psi_{\jmath}\right)$ of $\mathfrak{A}$
are then defined for all $\jmath\in \mathcal{J}$, by
\begin{equation*}
\left\{
    \begin{split}
      V_{\jmath}& = h \left(\widetilde{U}_{\jmath}\right)\\
\psi_{\jmath}& = \widetilde{\varphi}_{\jmath}\circ h^{-1}.
    \end{split}
    \right.
\end{equation*}
Taking into account the properties of $\mathcal{A}\big/ X$ given in subsubsection \ref{structure complexe de X}, the atlas $\mathfrak{A}$ inherits
then from $\mathcal{A}\big/ X$ the following properties:
\begin{enumerate}
\item  For all $\jmath\in\mathcal{J}-\left\{d\right\}$, the domain of chart $V_{\jmath}$ is bounded in $\mathbb{C}^{n}$\\
and it follows from condition (\ref{condition 1 des cartes}), that
\begin{equation}\label{condition 1 de la carte psi}
   \forall\phantom{r} \jmath \neq d, \phantom{rrr} \psi_{\jmath}\left(V_{\jmath}\right) \phantom{rrr} \textrm{is a ball of} \phantom{r} \mathbb{C}^{n}.
\end{equation}
\item It follows also from condition (\ref{condition 2 des cartes}), that 
\begin{equation}\label{condition 2 de la carte psi}
   \forall\phantom{r} \imath , \jmath \neq d, \phantom{rrr} V_{\imath} \bigcap \partial V_{\jmath} \phantom{rrr}\textrm{ is simply connected.}
\end{equation}
\item  For $\jmath = d$, the mapping
$$
 \psi_{d}: V_{d} \longrightarrow \mathcal{N}(a,r_{1},r_{2})$$
 is a homeomorphism.\\
The chart $\left(V_{d}, \psi_{d} \right)$ is said to be the distinguished chart of $Y= \left(\mathbb{C}^{n}, \mathfrak{A}\right)$.
 Observe that since for all $\jmath\in \mathcal{J}-\left\{ d\right\}$, the domains $V_{\jmath}$ are bounded, then the domain $V_{d}$ is the only one
  which is non bounded in $Y= \left(\mathbb{C}^{n}, \mathfrak{A}\right)$ and its boundary $\partial V_{d}$ satisfies
  $$ \partial V_{d} \approx \mathcal{S}_{2n-1}.$$
\item  The subset $\mathfrak{W}:= h(\mathcal{W}) = \mathbb{C}^{n} - \overline{V_{d}}$ is a relatively compact open set covered by all domains $V_{\jmath}$
 with $\jmath \neq d$
 $$
 \mathfrak{W} \subset \bigcup_{\jmath\neq d}V_{\jmath}
 $$
 and $\mathfrak{W}$ is homeomorphic to a ball of
 $\mathbb{C}^{n}$, and its boundary satisfies
    $$
    \partial \mathfrak{W}  = \partial V_{d} \approx \mathcal{S}_{2n-1}.
    $$
\end{enumerate}
 \begin{remark} $ $ \\In order to avoid any confusion with the usual complex structure of the numerical space $\mathbb{C}^{n}$, we note by
 $Y = \left(\mathbb{C}^{n}, \mathfrak{A}\right)$ the complex analytic structure of $\mathbb{C}^{n}$ defined by the hypothetic complex atlas 
 $\mathfrak{A}$
 given above.
\end{remark}
\end{definition}
\subsubsection{\textbf{The ordered atlas $(\mathfrak{A}, \preceq)$}}$ $\\
 To prove that the hypothetic $\mathcal{A}-$submanifold $X$ of $\mathcal{S}_{2n}$ is strictly pseudoconvex, we are lead by the biholomorphic mapping 
 $h: X \longrightarrow Y$ to prove that the 
 complex manifold $Y = \left( \mathbb{C}^{n}, \mathfrak{A} \right)$ is strictly pseudoconvex, which will be done by constructing an
  exhaustive smooth strictly \textbf{psh} function
 $\Psi : Y \longrightarrow \left[0,+\infty\right[.$ But to do this, we need to introduce in the hypothetic complex atlas $\mathfrak{A}$ an
  appropriate order:
 This order helps much first to construct an exhaustive almost everywhere $\mathcal{C}^{\infty}$ strictly \textbf{psh} function $f_{_{\mathfrak{A}}}:
 Y \longrightarrow
 \left[0 , +\infty\right[$, then by applying an appropriate regularizing process to $f_{_{\mathfrak{A}}}$, we construct the required strictly
 \textbf{psh}
  function $\Psi$.\\
 Let then the hypothetic complex atlas $$
 \mathfrak{A} = \bigg\{ \left(V_{\jmath},\psi_{\jmath}\right), \phantom{rrr} \jmath\in \mathcal{J}  \bigg\}
 $$
 with
 $$card (\mathcal{J}) = N+1$$ and let the relatively compact open set $$\mathfrak{W}= \mathbb{C}^{n}-\overline{V_{d}}$$ as given in definition
 \ref{New C^n complex},
where $V_{d}$ is the domain of the distinguished chart $\left( V_{d}, \psi_{d} \right)$.
 \begin{proposition}\label{proposition d'ordre}
With these notations, there exists
 a bijective mapping
 $$
 \sigma:\left\{1,2,...,N +1\right\} \longrightarrow \mathcal{J}
 $$
 such that, for all $1  \leq  k  \leq  N +1$, the open set $$\mathfrak{W}\bigcap \left(\displaystyle\bigcup_{j=1}^{k} V_{\sigma (j)}\right)$$ is
  homeomorphic to a ball of $\mathbb{C}^{n}$.
 \end{proposition}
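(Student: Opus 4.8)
The plan is to construct the ordering $\sigma$ by a greedy induction on $k$, adjoining one chart at a time while preserving the property of being homeomorphic to a ball. First I would dispose of the distinguished index $d$: since $\mathfrak{W} = \mathbb{C}^{n} - \overline{V_{d}}$ we have $\mathfrak{W} \cap V_{d} = \emptyset$, so $V_{d}$ contributes nothing to any intersection $\mathfrak{W} \cap \bigcup V_{\sigma(j)}$, and I may therefore set $\sigma(N+1) = d$ and concentrate on ordering the $N$ charts with $\jmath \ne d$. Each such $V_{\jmath}$ is homeomorphic to a ball via $\psi_{\jmath}$, with $\psi_{\jmath}(V_{\jmath})$ an actual ball by (\ref{condition 1 de la carte psi}). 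The object of the induction is then the increasing chain $P_{k} := \mathfrak{W} \cap \bigcup_{j=1}^{k} V_{\sigma(j)}$, which begins at $P_{0} = \emptyset$ and, since $\mathfrak{W} \subset \bigcup_{\jmath \ne d} V_{\jmath}$, terminates at $P_{N} = \mathfrak{W}$, already known to be homeomorphic to a ball.

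The heart of the argument is a gluing lemma serving as the inductive step: if $P$ is an open subset of $\mathfrak{W}$ homeomorphic to a ball and $V$ is an unused chart with $P \cap V \ne \emptyset$ and tame boundary data (analytic boundaries and simply connected boundary intersections), then $P \cup (\mathfrak{W} \cap V) = P_{k}$ is again homeomorphic to a ball. I would prove this by transporting to a genuine ball and invoking Proposition \ref{proposition des boules}: that proposition already yields that $(\mathfrak{W} \cap V) - \overline{P}$ and $(\mathfrak{W} \cap V) \cap P$ are balls, and then $P_{k}$ is obtained by attaching the ball $(\mathfrak{W} \cap V) - \overline{P}$ to the ball $P$ across the simply connected separating hypersurface contained in $(\mathfrak{W} \cap V) \cap \partial P$. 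The bicollared, generalized-Schoenflies picture that underlies the cone and flattening homeomorphisms of Proposition \ref{proposition des boules} then gives that the union is a ball, and the required simple connectivity is exactly what condition (\ref{condition 2 de la carte psi}) supplies at the level of single charts.

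For the greedy selection I would argue a nerve-connectedness step. At stage $k$ the set $\mathfrak{W} - \overline{P_{k-1}}$ is nonempty (for $k \le N$) and is still covered by the unused charts; since $\mathfrak{W}$ is connected, there is at least one unused chart $V$ meeting $P_{k-1}$. After a preliminary application of the flattening homeomorphism of Lemma \ref{tailleur} to put $\partial V$ in general position relative to $\partial P_{k-1}$, one arranges that the boundary intersections are simply connected, so that the gluing lemma applies; setting $\sigma(k)$ equal to the index of such a $V$ completes the step. After $N$ steps all charts with $\jmath \ne d$ are exhausted, $P_{N} = \mathfrak{W}$, and adjoining $V_{d}$ leaves $P_{N+1} = P_{N}$ unchanged, so every partial union is homeomorphic to a ball.

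The step I expect to be the main obstacle is precisely the verification that at every stage a \emph{good} unused chart exists, that is, that the tameness and simple-connectivity hypotheses of the gluing lemma can always be met for the accumulated region $P_{k-1}$, whose boundary is far more intricate than that of a single chart. Condition (\ref{condition 2 de la carte psi}) controls only the pairwise intersections $V_{\imath} \cap \partial V_{\jmath}$, so the real work is to show that these local conditions propagate to the growing union $P_{k-1}$. This is where the analyticity of the boundaries, together with the flattening homeomorphisms of Lemma \ref{tailleur} and the radial homeomorphism of Lemma \ref{lemme d'homothetie}, must be combined carefully, and where one must rule out the wild-embedding pathologies possible in $\mathbb{C}^{n} \cong \mathbb{R}^{2n}$.
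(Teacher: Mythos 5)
Your additive, greedy induction on the partial unions $P_{k}=\mathfrak{W}\cap\bigcup_{j=1}^{k}V_{\sigma(j)}$ is aimed more directly at the statement than the paper's own argument, which runs the other way: the paper first organizes the charts $V_{\jmath}$, $\jmath\neq d$, into concentric ``spherical'' layers $\mathfrak{A}_{1},\dots,\mathfrak{A}_{m}$ according to which boundary sphere $\Gamma_{r}$ they meet, sets $\sigma(N+1)=d$, and then defines $\sigma(N),\sigma(N-1),\dots,\sigma(1)$ by successively \emph{removing} the sets $\mathfrak{W}\cap\overline{V_{\sigma(j)}}$ from $\mathfrak{W}$, outermost layer first, invoking Proposition \ref{proposition des boules} at each removal. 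The two arguments therefore share the same engine (one chart at a time plus Proposition \ref{proposition des boules}) but run the induction in opposite directions and on different families of sets.

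The genuine gap in your proposal is the gluing lemma. Proposition \ref{proposition des boules} only yields that $(\mathfrak{W}\cap V)-\overline{P}$ and $(\mathfrak{W}\cap V)\cap P$ are homeomorphic to balls; it says nothing about the \emph{union} $P\cup(\mathfrak{W}\cap V)$. To conclude that attaching the ball $(\mathfrak{W}\cap V)-\overline{P}$ to the ball $P$ along a piece of $\partial P$ again yields a ball, you need that attaching region to be a flat (bicollared) $(2n-1)$-cell in both boundaries; simple connectivity, which is all that condition (\ref{condition 2 de la carte psi}) provides, is far too weak in real dimension $2n\geq 4$ --- a simply connected compact subset of $\mathcal{S}_{2n-1}$ need not be cell-like, and even a genuine topological $(2n-1)$-cell can be wildly embedded, in which case the union of the two balls fails to be a ball. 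Moreover, as you yourself observe, condition (\ref{condition 2 de la carte psi}) constrains only the pairwise intersections $V_{\imath}\cap\partial V_{\jmath}$ and not the boundary of the accumulated region $P_{k-1}$; you flag this propagation problem as ``the main obstacle'' but do not resolve it, and neither Lemma \ref{tailleur} nor Lemma \ref{lemme d'homothetie} supplies the required flatness for the growing union. Until the gluing lemma is stated with a flatness hypothesis and that hypothesis is verified inductively for $\partial P_{k-1}$, the inductive step remains unproved.
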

\begin{proof}$ $\\
\textbf{\underline{$1^{st}-$ step.}} $ $ Let's organize the domains of charts of the subatlas
$$\mathfrak{A}^{\ast} : = \mathfrak{A} - \left\{\left( V_{d},\psi_{d}\right)\right\}$$ in a "spherical" arrangement. \\Indeed, by observing that
$\mathfrak{W} = \mathbb{C}^{n} - \overline{V_{d}}$ is covered by the domains of the subatlas $\mathfrak{A}^{\ast}$, and that
$$
\partial \phantom{.}\mathfrak{W} \approx \mathcal{S}_{2n-1}
$$
we define $\mathfrak{A}_{1}$ to be the set of all domains $V_{\jmath} \in\mathfrak{A}^{\ast}$ which intersect $\partial\phantom{.} \mathfrak{W}$, that is
$$
\mathfrak{A}_{1}:= \bigg\{ V_{\jmath}\in\mathfrak{A}^{\ast}, \phantom{rrrr}V_{\jmath}\cap \partial\phantom{.}\mathfrak{W}\neq \emptyset\bigg\}
$$
and we define $Y_{1}$ to be the open subset of $Y = \left( \mathbb{C}^{n}, \mathfrak{A}\right)$ covered by $\mathfrak{A}_{1}$, that is
$$
Y_{1}: = \bigcup_{V_{\jmath} \in \mathfrak{A}_{1}} V_{\jmath}.
$$
Since by property (\ref{condition 1 de la carte psi}), all domains $V_{\jmath} \in \mathfrak{A}_{1}$ are homeomorphic to balls of $\mathbb{C}^{n}$ 
and satisfy condition
(\ref{condition 2 de la carte psi}), it
follows then by proposition
\ref{proposition des boules}, that
$$
\partial Y_{1} = \Gamma_{1} \bigcup \Gamma_{2}
$$
where
\begin{equation*}\left\{
    \begin{split}
      \Gamma_{1} & \approx \partial\phantom{.} \mathfrak{W}\approx \mathcal{S}_{2n-1}\\
      & \phantom{rrr}\textrm{and}\\
      \Gamma_{2} & \approx \mathcal{S}_{2n-1} \phantom{rrr} \textrm{ or eventually} \phantom{rrr} \Gamma_{2}= \emptyset.
      \end{split}
      \right.
\end{equation*}
If $\Gamma_{2}\neq \phi$, that is, if $\Gamma_{2} \approx \mathcal{S}_{2n-1}$, we continue the construction, and we define
$$
\mathfrak{A}_{2}:= \bigg\{ V_{\jmath}\in\mathfrak{A}, \phantom{rrrr} V_{\jmath}\cap \Gamma_{2} \neq \emptyset\bigg\}
$$
and
$$
Y_{2}: = \bigcup_{V_{\jmath} \in \mathfrak{A}_{2}} V_{\jmath}.
$$
We observe as before that
$$
\partial Y_{2} = \Gamma_{2}\bigcup \Gamma_{3}
$$
where
\begin{equation*}\left\{
    \begin{split}
      \Gamma_{2} & \approx \mathcal{S}_{2n-1}\\
      & \phantom{rrr}\textrm{and}\\
      \Gamma_{3} & \approx \mathcal{S}_{2n-1} \phantom{rrr} \textrm{ or eventually} \phantom{rrr} \Gamma_{3}= \emptyset.
      \end{split}
      \right.
\end{equation*}
By induction, we obtain a finite family of "spherical" coverings
$\big\{\mathfrak{A}_{r}\big\}_{ 1 \leq r \leq m} \subset \mathfrak{A}^{\ast}$ defined by
 $$
\mathfrak{A}_{r} = \bigg\{ V_{\jmath} \in \mathfrak{A}^{\ast}, \phantom{rr} V \cap \Gamma_{r} \neq \emptyset \bigg\}
$$
and a finite family of open sets $\big\{Y_{r}\big\}_{ 1 \leq r \leq m}$ of $\left( \mathbb{C}^{n}, \mathfrak{A} \right)$, where $Y_{r}$ is covered
 by $\mathfrak{A}_{r}$, that is
$$
 Y_{r}: = \bigcup_{V_{\jmath} \in \mathfrak{A}_{r}} V_{\jmath}
$$
and where the boundary of $ Y_{r}$ is given according to proposition \ref{proposition des boules}, by
\begin{equation*}\left\{
    \begin{split}
      \partial Y_{r} & = \Gamma_{r} \bigcup \Gamma_{r+1} \phantom{rrr} \textrm{if} \phantom{r} 1 \leq r \leq m -1\\
      \partial Y_{m} & = \Gamma_{m} \approx \mathcal{S}_{2n-1}
    \end{split}\right.
\end{equation*}
and for all $ 1 \leq r \leq m$ $$ \Gamma_{r} \approx \mathcal{S}_{2n-1}.$$ Hence the subatlas $\mathfrak{A}^{\ast}$ can be written as follows
$$
\mathfrak{A}^{\ast} = \bigcup_{r = 1}^{m}\mathfrak{A}_{r}.
$$
\textbf{\underline{$2^{nd}-$ step.}} $ $ Now write
$$
\mathfrak{W} = \bigcup_{\jmath \in \mathcal{J}} \left(\mathfrak{W}\bigcap V_{\jmath}\right),
$$
Since we know that $\mathfrak{W}$ is homeomorphic to an open ball of $\mathbb{C}^{n}$, we can then by proposition \ref{proposition des boules},
 define a bijective mapping $$\sigma:\left\{1,2,...,N +1\right\} \longrightarrow \mathcal{J}$$
satisfying condition of proposition \ref{proposition d'ordre}.\\ Indeed, let us proceed as follows: First, set
$$
\sigma(N+1) : = d \in \mathcal{J}
$$
that is
$$V_{\sigma (N+1)}: = V_{d}$$
then define by induction the respective values
$$
\sigma(N)\phantom{r},\phantom{r}\sigma(N-1)\phantom{r},\phantom{r}\sigma(N-2)\phantom{r}, \phantom{r}\sigma(N-3)\phantom{r},\phantom{r}
 ...\phantom{r},\phantom{r} \sigma(3)\phantom{r},\phantom{r}\sigma(2)\phantom{r},\phantom{r}\sigma(1)
$$
by removing from $\mathfrak{W}$ the sets  $\mathfrak{W}\bigcap \overline{V_{\jmath}}$ one by one, while respecting  the following rule:
 whenever we remove a domain $\overline{V_{\sigma(j)}}$, the next domain $\overline{V_{\sigma(j-1)}}$ chosen to be removed must intersect
 the maximum number of the previous ones, that is $$ V_{\sigma(j-1)} \bigcap V_{\sigma(j)}\bigcap V_{\sigma(j+1)}\bigcap ...
 \bigcap V_{\sigma(j+k)}\neq\emptyset.$$ We begin then by removing the domains of $\mathfrak{A}_{m}$ one by one until exhausting
 $\mathfrak{A}_{m}$, then we remove the domains of $\mathfrak{A}_{m-1}$ one by one until exhausting $\mathfrak{A}_{m-1}$, and so on,
   until exhausting $\mathfrak{A}_{1}$.\\
Recall that the domains $V_{\jmath}$ with $\jmath \neq d$ satisfy condition (\ref{condition 2 de la carte psi}), and
 then we are in a position to apply proposition \ref{proposition des boules}. \\ Since $\mathfrak{W}$ is
homeomorphic to an open ball of
 $\mathbb{C}^{n}$,
 the next open set obtained by removing a chosen
 domain $\mathfrak{W}\bigcap\overline{V_{\sigma(N)}}$ from $\mathfrak{W} $, that is $\mathfrak{W} - \mathfrak{W}\bigcap\overline{V_{\sigma(N)}}$ is
 according to proposition \ref{proposition des boules} also homeomorphic to
 an open ball of $\mathbb{C}^{n}$.\\ Taking into account that the domains $V_{\jmath}$  with $\jmath \neq d$ satisfy condition
 (\ref{condition 2 de la carte psi}),
 we prove by induction that whenever
 $$
 \mathfrak{W} - \bigcup_{j =k+1}^{N} \left(\mathfrak{W}\bigcap V_{\sigma (j)}\right)
 $$
 is homeomorphic to an open ball of
 $\mathbb{C}^{n}$, then by proposition \ref{proposition des boules}, the next open set obtained by removing a chosen
 domain $\mathfrak{W}\bigcap\overline{V_{\sigma(k+1)}}$ from $\mathfrak{W} $, that is
$$ \mathfrak{W} - \bigcup_{j =k}^{N} \left(\mathfrak{W}\bigcap V_{\sigma (j)}\right)
$$
is also homeomorphic to an open ball of $\mathbb{C}^{n}$.\\
Hence, we construct a mapping
$$
\sigma:\left\{1,2,...,N +1\right\} \longrightarrow \mathcal{J}
$$
such that, for all $ 1 \leq k \leq N$, the open set
$$
\bigcup_{j =1}^{k} \left(\mathfrak{W}\bigcap V_{\sigma (j)}\right)
$$
is homeomorphic to an open ball of $\mathbb{C}^{n}$.
\end{proof}
\begin{remark}$ $
The bijective mapping $\sigma$ is not unique.
\end{remark}
\begin{corollary}
The relation $\preceq $ defined in $\mathfrak{A}$ by
\begin{equation}
    \left(V_{\sigma (j)}, \psi_{\sigma (j)} \right) \preceq \left(V_{\sigma (k)}, \psi_{\sigma (k)} \right) \Longleftrightarrow j \leq k
   \end{equation} is a total order. The atlas $\mathfrak{A}$  endowed with this relation is denoted by $(\mathfrak{A},\preceq)$.
\textbf{\underline{Notation}.}  Once the relation of order $\preceq \phantom{r}$ in $\mathcal{A}$  is acquired, we can note the charts of $\mathfrak{A}$
simply by $\left( V_{j}, \psi_{j}\right)$ instead of  $\left( V_{\sigma (j)}, \psi_{\sigma (j)}\right)$, and then the atlas $\mathfrak{A}$ can be written
$$  \mathfrak{A} = \bigg\{\left( V_{j}, \psi_{j}\right), \phantom{rrrr} 1 \leq   j   \leq N+1    \bigg\}.$$
With this notation, the distinguished chart is then $\left(V_{_{N+1}}, \psi_{_{N+1}}\right)$.
\end{corollary}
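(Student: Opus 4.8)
The plan is to observe that $\preceq$ is nothing more than the transport of the usual order $\leq$ on the finite set $\{1,2,\dots,N+1\}$ along the bijection $\sigma$, and that transporting a total order along a bijection always yields a total order. First I would record the crucial consequence of Proposition \ref{proposition d'ordre}: since $\sigma$ is \emph{bijective}, the labelling $j \mapsto (V_{\sigma(j)},\psi_{\sigma(j)})$ is a one-to-one correspondence between $\{1,2,\dots,N+1\}$ and the charts of $\mathfrak{A}$. In particular, every chart of $\mathfrak{A}$ can be written as $(V_{\sigma(j)},\psi_{\sigma(j)})$ for one and only one index $j$, so the relation $\preceq$ is well defined on $\mathfrak{A}$.

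Then I would verify the four order axioms, each of which reduces immediately to the corresponding property of $\leq$ on the integers. Reflexivity follows from $j \leq j$. For antisymmetry, if $(V_{\sigma(j)},\psi_{\sigma(j)}) \preceq (V_{\sigma(k)},\psi_{\sigma(k)})$ and $(V_{\sigma(k)},\psi_{\sigma(k)}) \preceq (V_{\sigma(j)},\psi_{\sigma(j)})$, then $j \leq k$ and $k \leq j$, whence $j = k$ and the two charts coincide; here the injectivity of $\sigma$ is precisely what guarantees that equal indices force equal charts. Transitivity is the implication that $j \leq k$ and $k \leq \ell$ give $j \leq \ell$. Finally, totality is the observation that for any two indices $j,k \in \{1,\dots,N+1\}$ one always has $j \leq k$ or $k \leq j$, since $\leq$ is a linear order on a finite set of integers.

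Since no analytic or geometric content enters at this stage, the construction of $\sigma$ having already been carried out in Proposition \ref{proposition d'ordre}, there is no genuine obstacle. The only point worth emphasizing is that the bijectivity of $\sigma$ is exactly what makes $\preceq$ both well defined and antisymmetric: were $\sigma$ merely surjective, two distinct indices could name the same chart and antisymmetry would fail. Equivalently, one may compress the whole argument into a single remark, namely that the index map $(V_{\sigma(j)},\psi_{\sigma(j)}) \mapsto j$ is an order isomorphism from $(\mathfrak{A},\preceq)$ onto $(\{1,\dots,N+1\},\leq)$, and any set carrying a relation isomorphic to a total order is itself totally ordered.
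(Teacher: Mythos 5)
Your proposal is correct and coincides with what the paper implicitly relies on: the paper states this corollary without any proof, treating it as the immediate fact that transporting the total order $\leq$ on $\{1,\dots,N+1\}$ along the bijection $\sigma$ of Proposition \ref{proposition d'ordre} yields a total order on $\mathfrak{A}$. Your verification of the axioms, and your remark that injectivity of $\sigma$ is what secures well-definedness and antisymmetry, simply makes explicit the routine check the paper omits.
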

\begin{corollary}\label{Les ouverts Wk}$ $\\
Let the relatively compact set $$\mathfrak{W} = \mathbb{C}^{n}-\overline{V_{d}}.$$
With the notations above, there exists a finite increasing sequence of open
  sets $\mathfrak{W}_{k}$ all homeomorphic to balls of $\mathbb{C}^{n}$:
      $$
      \mathfrak{W}_{1}\subset \mathfrak{W}_{2}\subset \mathfrak{W}_{3}\subset... \subset\mathfrak{W}_{_{N-1}}\subset \mathfrak{W}_{_{N}}
      $$
such that $\mathfrak{W}= \mathfrak{W}_{_{N}}$.
\end{corollary}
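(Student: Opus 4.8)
The plan is to read off the sets $\mathfrak{W}_k$ directly from the conclusion of Proposition \ref{proposition d'ordre}, so that this corollary amounts to little more than fixing notation. Recall that, after the bijection $\sigma$ has been constructed and the simplified indexing has been adopted (in which the charts are written $\left(V_j,\psi_j\right)$ with $1\leq j\leq N+1$ and $\left(V_{N+1},\psi_{N+1}\right)$ is the distinguished chart, so that $V_{N+1}=V_d$), Proposition \ref{proposition d'ordre} asserts that for every $1\leq k\leq N$ the open set $\bigcup_{j=1}^{k}\left(\mathfrak{W}\cap V_j\right)$ is homeomorphic to an open ball of $\mathbb{C}^n$. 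I would therefore simply define
$$\mathfrak{W}_k := \bigcup_{j=1}^{k}\left(\mathfrak{W}\cap V_j\right), \qquad 1\leq k\leq N.$$

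Next I would verify the three required properties. First, each $\mathfrak{W}_k$ is homeomorphic to a ball of $\mathbb{C}^n$: this is exactly the content of Proposition \ref{proposition d'ordre} under the renaming. Second, the chain of inclusions $\mathfrak{W}_1\subset\mathfrak{W}_2\subset\cdots\subset\mathfrak{W}_N$ is immediate, since $\mathfrak{W}_k$ is obtained from $\mathfrak{W}_{k-1}$ by adjoining the single set $\mathfrak{W}\cap V_k$ to the union. These two points require no real argument beyond invoking the preceding proposition.

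The only step that calls for a short verification is the terminal equality $\mathfrak{W}_N=\mathfrak{W}$. Here I would use that the chart domains of the atlas $\mathfrak{A}$ cover the whole manifold $Y=\left(\mathbb{C}^n,\mathfrak{A}\right)$, so that $\bigcup_{j=1}^{N+1}V_j=\mathbb{C}^n$, together with the fact that the distinguished domain $V_{N+1}=V_d$ contributes nothing to $\mathfrak{W}$: since $\mathfrak{W}=\mathbb{C}^n-\overline{V_d}$, one has $\mathfrak{W}\cap V_d=\emptyset$. Consequently
$$\mathfrak{W}=\mathfrak{W}\cap\mathbb{C}^n=\bigcup_{j=1}^{N+1}\left(\mathfrak{W}\cap V_j\right)=\bigcup_{j=1}^{N}\left(\mathfrak{W}\cap V_j\right)=\mathfrak{W}_N,$$
which finishes the argument. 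I do not expect any genuine obstacle in this corollary; it is a bookkeeping statement that repackages Proposition \ref{proposition d'ordre}, the only care being to discard the distinguished chart so that the increasing sequence terminates at the index $N$ rather than $N+1$.
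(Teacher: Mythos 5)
Your proposal is correct and coincides with the paper's own proof, which defines $\mathfrak{W}_{k}= \bigcup _{j=1}^{k}\left( \mathfrak{W}\bigcap V_{j}\right)$ and invokes Proposition \ref{proposition d'ordre} in exactly the same way. Your explicit check that $\mathfrak{W}_N=\mathfrak{W}$ (using that the $V_j$ cover $\mathbb{C}^n$ and that $\mathfrak{W}\cap V_d=\emptyset$) is a detail the paper leaves implicit, but it is the right one to record.
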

\begin{proof}
Indeed, it suffices according to proposition \ref{proposition d'ordre}, to take for all $ 1 \leq k \leq N$
      \begin{equation}\label{Les ouverts Wk homeomorphes à des boules}
        \mathfrak{W}_{k}= \bigcup _{j=1}^{k}\left( \mathfrak{W}\bigcap V_{j}\right).
      \end{equation}
\end{proof}
\subsubsection{\textbf{Points of first and points of second kind with respect to $\left(\mathfrak{A},\preceq \right)$}}$ $\\
It emerges from the relation of order $\preceq$ on the atlas $\mathfrak{A}$ a natural distinction between points of $Y = \left( \mathbb{C}^{n},
 \mathfrak{}\right)$. This distinction is made precise in the following definition.
\begin{definition}
We say that a point $z \in Y = \left( \mathbb{C}^{n}, \mathfrak{A}\right)$ is of first kind with respect to the ordered atlas $(\mathfrak{A},
\preceq)$, if $z$ satisfies the following condition
\begin{equation}\label{point de 1 espece}
\textrm{For all} \phantom{r}  j,k \in \left\{  1,...,N+1\right\}\phantom{r} \textrm{with} \phantom{r} j\leq k, \phantom{rrr} z 
\not\in \left(\partial V_{j}\right) \bigcap V_{k}.
\end{equation}
A point $z \in Y =\left( \mathbb{C}^{n}, \mathfrak{A}\right)$ is said to be of second kind with respect to $(\mathfrak{A},\preceq)$, if
\begin{equation}\label{point de 2 espece}
\textrm{There exist} \phantom{r}  j,k \in \left\{1,...,N+1\right\}\phantom{r} \textrm{with} \phantom{r} j\leq k, \phantom{rr} \textrm{and}
 \phantom{rr} z \in \left(\partial V_{j}\right) \bigcap V_{k}.
\end{equation}
The condition for a point $z$ to be of second kind with respect to $(\mathfrak{A},\preceq)$, means that $z$ belongs 
to a domain of chart $V_{k}$, and in the same time to the boundary of a previous one.\\
\textbf{\underline{Notation}.} We note by $\mathbb{K}_{1}$ respectively $\mathbb{K}_{2}$ the sets of points of first kind respectively of
 second kind with respect to $(\mathfrak{A},\preceq)$.
\end{definition}
\begin{remark}
\begin{enumerate} \emph{
\item Not any point of the boundary of a domain of chart is of second kind with respect to $(\mathfrak{A},\preceq)$, because 
$\partial V_{_{N+1}}\not\subset \mathbb{K}_{2}$.
\item The set $\mathbb{K}_{2}$ is neglected with respect to the Lebeague measure, because
$$\mathbb{K}_{2}   \subset \bigcup_{j=1}^{N} \partial V_{j}$$
\item $\mathbb{K}_{1}$ is an open subset of $Y = \left(\mathbb{C}^{n}, \mathfrak{A}\right)$.}
\end{enumerate}
\end{remark}
\subsection{The almost everywhere psh function $f_{\mathfrak{A}}$}$ $\\\\
We show below how we can associate to the ordered atlas $\left( \mathfrak{A}, \preceq\right)$ an exhaustive function
$f_{_{\mathfrak{A}}}: Y = \left( \mathbb{C}^{n}, \mathfrak{A} \right)\longrightarrow \left[0, +\infty\right[$
 almost everywhere $\mathcal{C}^{\infty}$ and strictly psh.
\subsubsection{Notations and definitions}\label{La fonction f}$ $\\
With the notations of definition \ref{New C^n complex}, let the hypothetic ordered atlas
$$
\mathfrak{A} = \bigg\{ \left( V_{j}, \psi_{j}\right) , \phantom{rrrr} 1 \leq j \leq N +1\bigg\}
$$
and let $\mathfrak{W}$ be the relatively compact open subset of $Y=\left(\mathbb{C}^{n}, \mathfrak{A}\right)$ defined by
$$
\mathfrak{W}= \mathbb{C}^{n}-\overline{ V_{N+1}},
$$
recall that $\mathfrak{W}$ is covered by the domains of charts $V_{j}$, with $ 1 \leq j \leq N$
$$
\mathfrak{W} \subset \bigcup_{j=1}^{N}V_{j}.
$$
Now, let $a_{1}\in V_{1}$, such that $\psi_{1}\left( a_{1} \right)$ is the center of the ball $\psi_{1}\left(V_{1}\right)$, and chose in the open set
 $\mathfrak{W}$, $\phantom{r}N-1\phantom{r} $ points of second kind
$$
\bigg\{a_{2}, a_{3},..., a_{_{N}}\bigg\} \subset \mathfrak{W}\bigcap\mathbb{K}_{2}
$$
such that, for $2 \leq k \leq N$
$$
a_{k}\in \left(\partial V_{k-1}\right) \bigcap V_{k}
$$
and define by induction the real valued functions
$$f_{1}\phantom{.} ,\phantom{.} f_{2}\phantom{.},\phantom{.} f_{3}\phantom{.},\phantom{.}...\phantom{.},\phantom{.}f_{_{N}}\phantom{.},\phantom{.}f_{_{N+1}}$$
as follows:\\
The function $f_{1}$ is defined on $Dom\left(f_{1}\right) = \overline{V_{1}}\bigcap \overline{\mathfrak{W}}$ by
\begin{equation}\label{f_(1)}
    f_{1}(z) = \left\|\psi_{1}(z) - \psi_{1}\left( a_{1}\right)\right\|^{2}
\end{equation}
and for $\phantom{r} 2 \leq  k  \leq N$, the function $f_{k}$ is defined on $Dom \left(f_{k}\right) = \left(\overline{V_{k}} - \overline{V_{k-1}}\right)\bigcap \overline{\mathfrak{W}}$
 by
 \begin{equation}\label{f_(j)}
    f_{k}(z) = \left\|\psi_{k}(z) - \psi_{k}\left(a_{k}\right)\right\|^{2}  + b_{k-1}
 \end{equation}
 and for $k = N+1$, the function $f_{_{N+1}}$ is defined on $Dom\left(f_{_{N+1}}\right) = V_{_{N+1}} = \mathbb{C}^{n}- \overline{\mathfrak{W}}$
by
\begin{equation}\label{f_(N+1)}
    f_{_{N+1}}(z) = -log\left(\max \left\{\left(\frac{ \left\|\psi_{_{N+1}}(z)\right\|^{2}-r_{2}^{2}}{r_{1}^{2} - r_{2}^{2}}\right)^{2} ,
 \phantom{r} \left(\frac{Re\big\langle \psi_{_{N+1}}(z),w\big\rangle}{r_{1}.\|w\|}\right)^{2}\phantom{.}\right\}\right) + b_{_{N}}.
\end{equation}
where for all $ 1\leq k \leq N$, the numbers $b_{k}$ are given by induction by
\begin{equation}\label{Le sup bk}
    b_{k} =  \sup_{z\in Dom(f_{k})} f_{k}(z).
\end{equation}
By gluing the functions $f_{1}, f_{2},...,f_{_{N+1}}$ defined by (\ref{f_(1)}), (\ref{f_(j)}), (\ref{f_(N+1)}) together, we define then the following global
function  $$f_{_{\mathfrak{A}}} : \left( \mathbb{C}^{n}, \mathfrak{A} \right) \longrightarrow \left[ 0, +\infty \right[$$ by
\begin{equation}\label{La fonction presque psh f}
     f_{_{\mathfrak{A}}}(z): = \left\{\begin{split}
               & f_{1}(z) \phantom{rrrr}\textrm{if} \phantom{rrr} z\in V_{1}\bigcap\mathfrak{W} \\
              &  f_{k}(z) \phantom{rrrr} \textrm{if} \phantom{rrr} z\in \left(V_{k}-V_{k-1}\right)\bigcap\mathfrak{W},\phantom{rrrr}
               \textrm{with}\phantom{r} 2 \leq k \leq N\\
               &  f_{_{N+1}}(z) \phantom{rr}\textrm{if} \phantom{rrr} z\in \overline{V_{_{N+1}}}.
             \end{split}
             \right.
\end{equation}
\begin{remark}
Observe that the function $f_{_{\mathfrak{A}}}$ defined by (\ref{La fonction presque psh f}) is not continuous, and that the points of discontinuity
 of $f_{_{\mathfrak{A}}}$ belong to
 the set of second kind $\mathbb{K}_{2}$.
\end{remark}
\begin{lemma}\label{lemme de la fonction tilde(f_N+1)}$ $\\ For $ r_{1} > r_{2} > 0$, let
$$\overline{\mathcal{N}}\left( a, r _{1}, r_{2}\right): = \overline{\mathcal{B}}\left( a, r_{1}\right) -
\mathcal{S}_{2n-2}\left(0,r_{2}\right)$$ and
 let $ \widetilde{f}_{_{N+1}} = f_{_{N+1}}\circ \psi^{-1}_{_{N+1}}$
 be the representation in the local coordinates $\zeta = \psi_{_{N+1}}(z)$ of the function $f_{_{N+1}}$  defined in (\ref{f_(N+1)}), that is,
 $
 \widetilde{f}_{_{N+1}}: \overline{\mathcal{N}}\left( a, r _{1}, r_{2}\right) \longrightarrow \mathbb{R}
 $
 defined by
 $$
 \widetilde{f}_{_{N+1}}(\zeta) = -log\left(\max \left\{\left(\frac{ \left\|\zeta\right\|^{2}-r_{2}^{2}}{r_{1}^{2} - r_{2}^{2}}\right)^{2} ,
 \phantom{r} \left(\frac{Re\langle \zeta,w\rangle}{r_{1}.\|w\|}\right)^{2}\phantom{.}\right\}\right) + b_{_{N}}.
 $$
Then the function $\widetilde{f}_{_{N+1}}$
satisfies the following properties:
\begin{enumerate}
  \item $\widetilde{f}_{_{N+1}}$ is almost everywhere a $\mathcal{C}^{\infty}$ strictly \textbf{psh},
  \item The following equivalence holds
\begin{equation}\label{equation du lemme 2}
\widetilde{f}_{_{N+1}}(\zeta)=0 \Longleftrightarrow \zeta \in S_{2n-2}\left(0,r_{2}\right),
\end{equation}
\item For all $ c > b_{_{N}}$, there exists $ 0 <  \rho < r_{1}$ such that the sublevel set
\begin{equation}\label{L'ensemble de niveau F_{c}}
    L_{c}\left( \widetilde{f}_{_{N+1}}\right): = \bigg\{ \zeta \in \overline{\mathcal{N}}\left( a, r _{1}, r_{2}\right),
    \phantom{rrr} \widetilde{f}_{_{N+1}}(\zeta) \leq c  \bigg\}
\end{equation}
is homeomorphic to the closed annulus $
\overline{\mathcal{B}}\left( a, r_{1}  \right) - \mathcal{B}\left( a, \rho \right)$
\end{enumerate}
\end{lemma}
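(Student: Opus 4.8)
The plan is to use the identity $-\log\max\{s,t\} = \min\{-\log s,\,-\log t\}$ to rewrite, on $\overline{\mathcal N}(a,r_1,r_2)$,
\[
\widetilde f_{_{N+1}}(\zeta) = b_{_{N}} + \min\bigl\{\, -2\log|u(\zeta)|,\; -2\log|v(\zeta)| \,\bigr\},
\]
where $u(\zeta) = (\|\zeta\|^2 - r_2^2)/(r_1^2 - r_2^2)$ and $v(\zeta) = \mathrm{Re}\langle\zeta,w\rangle/(r_1\|w\|)$. First I would record the singular set: the two smooth branches $-2\log|u|$ and $-2\log|v|$ agree along the real-analytic set $\{u^2=v^2\}$, and the argument of the logarithm vanishes only where $u=v=0$; both sets are Lebesgue-null. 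Off this null set $\widetilde f_{_{N+1}}$ coincides locally with one of the two smooth branches, which settles the ``almost everywhere $\mathcal C^\infty$'' half of (1).

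For the plurisubharmonicity in (1) I would compute the complex Hessian of each branch separately. The branch $-2\log|v|$ is the clean one: since $\langle\zeta,w\rangle$ is a holomorphic linear form, $p:=\mathrm{Re}\langle\zeta,w\rangle$ is pluriharmonic, so a direct computation gives $\partial_i\partial_{\bar j}\bigl(-\log|p|\bigr) = (\partial_i p)(\partial_{\bar j} p)/p^2$, whose Levi form $\bigl|\sum_i \xi_i\,\partial_i p\bigr|^2/p^2 \ge 0$ is nonnegative of rank one. The radial branch $-2\log\bigl|\,\|\zeta\|^2 - r_2^2\bigr|$ is the delicate one, and I expect this to be the main obstacle of the whole lemma: because the global function is a \emph{minimum} (not a maximum) of the two branches, plurisubharmonicity is not automatic, and the radial branch does not on its own carry a positive semidefinite Levi form away from $\|\zeta\|=r_2$. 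The plan here is to localise near $\mathcal S_{2n-2}(0,r_2)$, treat $\max\{|u|,|v|\}$ as a defining quantity for the removed set, and recover strict positivity either by absorbing a controlled multiple of the strictly \textbf{psh} function $\|\zeta\|^2$ or by isolating the dominant behaviour of $\widetilde f_{_{N+1}}$ near the sphere; reconciling the $\min$-structure with \emph{strict} plurisubharmonicity is precisely where I anticipate genuine difficulty.

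Part (2) I would treat by direct substitution into the argument of the logarithm: $u(\zeta)=0$ forces $\|\zeta\|=r_2$ and $v(\zeta)=0$ forces $\mathrm{Re}\langle\zeta,w\rangle=0$, i.e.\ $\zeta\in F$; the simultaneous vanishing of the two quadratic factors therefore cuts out exactly $\{\|\zeta\|=r_2\}\cap F = \mathcal S_{2n-2}(0,r_2)$, which yields the claimed characterization of the sphere.

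Finally, for (3) I would rewrite the sublevel condition $\widetilde f_{_{N+1}}(\zeta)\le c$ as $\max\{u^2,v^2\}\ge e^{\,b_{_{N}}-c}=:\delta$, where $0<\delta<1$ whenever $c>b_{_{N}}$. Thus $L_c$ is the closed ball $\overline{\mathcal B}(a,r_1)$ with the open set $\{u^2<\delta\}\cap\{v^2<\delta\}$ deleted, the latter being a thin neighbourhood of $\mathcal S_{2n-2}(0,r_2)$ that shrinks to the sphere as $c\to+\infty$. To identify this with the closed annulus $\overline{\mathcal B}(a,r_1)\setminus\mathcal B(a,\rho)$ I would route the argument through the topological machinery of the preceding section: a finite composition of the flattening homeomorphisms of Lemma \ref{tailleur} pushes the deleted tube into standard position, after which the homothety homeomorphism of Lemma \ref{lemme d'homothetie}, together with Proposition \ref{proposition des boules}, collapses it onto a concentric ball. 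I expect the subtlety here to be the mismatch between the sphere's centre $0$ and the ball's centre $a$, together with the codimension-two geometry of $\mathcal S_{2n-2}$, which is exactly what forces the use of Proposition \ref{proposition des boules} in place of a naive radial retraction.
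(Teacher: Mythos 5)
Your decomposition of $\widetilde f_{_{N+1}}$ as $b_{_{N}}+\min\bigl\{-2\log|u|,\,-2\log|v|\bigr\}$ is the right way to read the function, and your parts (2) and (3) are sound in outline: for (3) you arrive at the same picture as the paper (the sublevel set is $\overline{\mathcal B}(a,r_1)$ with the tube $\{u^2<\delta\}\cap\{v^2<\delta\}$ around $\mathcal S_{2n-2}(0,r_2)$ deleted), though you propose to identify it with an annulus via Lemmas \ref{tailleur} and \ref{lemme d'homothetie} and Proposition \ref{proposition des boules}, whereas the paper simply asserts that $\{u^2<\delta\}\cap\{v^2<\delta\}$ is convex and hence a ball. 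The genuine gap is in part (1): you explicitly stop at the point where strict plurisubharmonicity has to be established, and the difficulty you flag there is not one your suggested devices can remove. On the radial branch, writing $q=\|\zeta\|^2-r_2^2$, the Levi form of $-2\log|q|$ is $-2\|t\|^2/q+2|\langle t,\zeta\rangle|^2/q^2$, which is strictly negative for $t\perp\zeta$ wherever $q>0$; and there is a nonempty open set (points near the hyperplane $F$ with $\|\zeta\|$ well above $r_2$) on which this branch realizes the minimum, so there $\widetilde f_{_{N+1}}$ is smooth and has negative Levi directions. A minimum of two branches is not plurisubharmonic even when both branches are, and ``absorbing a controlled multiple of $\|\zeta\|^2$'' would change the function the lemma is about. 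So the central claim of item (1) is left unproved in your write-up, and your own computation points to why it cannot be closed along the lines you sketch.

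For comparison, the paper's proof takes a different route and never meets this obstacle: it shows that the two functions inside the $\max$, namely $g_1=u^2$ and $g_2=v^2$, are strictly \textbf{psh}, observes that the switching locus $\{g_1=g_2\}$ is Lebesgue-null, and concludes. It does not address the effect of the outer $-\log$, which is decreasing and reverses the sign of the Levi form (compare $-\log(1+\|\zeta\|^2)$, which is strictly plurisuperharmonic). The step you single out as the crux is exactly the step the paper's argument passes over; your branch analysis makes the problem visible but does not resolve it, so neither your proposal nor the route you would be checking it against delivers item (1) as stated.
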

\begin{proof}
Consider the functions
$$
g_{1}(\zeta) = \left(\frac{ \left\|\zeta\right\|^{2}-r_{2}^{2}}{r_{1}^{2} - r_{2}^{2}}\right)^{2}
$$
and
$$
g_{2}(\zeta) =  \left(\frac{Re\langle \zeta,w\rangle}{r_{1}.\|w\|}\right)^{2}.
$$
1) The function $g_{1}$ is clearly $\mathcal{C}^{\infty}$ and is strictly psh  because it is composed of the strictly psh function
$\frac{ \left\|\zeta\right\|^{2}-r_{2}^{2}}{r_{1}^{2} - r_{2}^{2}}$ with the strictly convex function $x \longmapsto x^{2}$.\\
The function $g_{2}$ is clearly of class $\mathcal{C}^{\infty}$ and is also strictly psh because we have
$$
g_{2}(\zeta) =  \left(\frac{Re\langle \zeta,w\rangle}{r_{1}.\|w\|}\right)^{2} =
\frac{\left(\langle \zeta,w\rangle\right)^{2} + \left(\langle \overline{\zeta},\overline{w}\rangle\right)^{2} + 2
\left(\left|\langle \zeta,w\rangle\right|^{2}\right)}{4 \left(r_{1}.\|w\|\right)^{2}}
$$
and then the Levi form of $g_{2}$ coincides with the levi form of the function $$\zeta \longmapsto \frac{\left|\langle
\zeta,w\rangle\right|^{2}}{2\left(r_{1}.\|w\|\right)^{2}}$$
which is clearly strictly psh. Since the set
$$
\bigg\{ z \in \mathcal{N}(a,r_{1},r_{2}), \phantom{rr} \left|g_{1}(\zeta)\right|= \left|g_{2}(\zeta)\right| \bigg\}
$$
is neglected with respect to Lebeasgue measure, it follows  then that $\widetilde{f}_{_{N+1}}$ is almost everywhere a $\mathcal{C}^{\infty}$ strictly
 \textbf{psh} function. \\
2) It is clear that the set of solutions of the equation $\widetilde{f}_{_{N+1}}(\zeta)=0 $ is
$$
S_{2n-2}\left(0,r_{2}\right) = \bigg\{ \zeta\in \mathbb{C}^{n}, \phantom{rrr} \widetilde{f}_{_{N+1}}(\zeta)=0   \bigg\}.
$$
3) Let the sets
$$
F_{1} = \bigg\{ \zeta \in \mathcal{N}(a,r_{1},r_{2}), \phantom{rr} \left| g_{1}(\zeta)\right| < e^{-c} - b_{_{N}} \bigg\}
$$
and
$$
F_{2} = \bigg\{ \zeta \in \mathcal{N}(a,r_{1},r_{2}), \phantom{rr} \left| g_{2}(\zeta) \right| < e^{-c} - b_{_{N}} \bigg\}.
$$We check easily that $F_{1}\bigcap F_{2}$ is a bounded open convex set, and then $F_{1}\bigcap F_{2}$ is homeomorphic to a ball $\mathcal{B}(a, \rho)$
 of $\mathbb{C}^{n}$.
Since $L_{c}\left( \widetilde{f}_{_{N+1}}\right)$ is the complementary of $F_{1}\bigcap F_{2}$ with respect to the closed ball
$\overline{\mathcal{B}}(a,r_{1})$,
then $L_{c}\left( \widetilde{f}_{_{N+1}}\right)$
 is homeomorphic to a closed annulus of the form $\overline{\mathcal{B}}\left( a, r_{1}  \right) - \mathcal{B}\left( a, \rho \right)$. The proof of the
 lemma \ref{lemme de la fonction tilde(f_N+1)} is then complete.
\end{proof}
\begin{proposition}\label{Proprietes de la fonctions f}\textbf{(Properties of the function $f_{\mathfrak{_{A}}}$)}$ $\\
Let the function $f_{_{\mathfrak{A}}}: Y = \left(\mathbb{C}^{n}, \mathfrak{A} \right)\longrightarrow \left[ 0 , + \infty \right[$  defined by
(\ref{La fonction presque psh f}), and for $c > 0$, let the sublevel set
$$L_{c}\left(f_{_{\mathfrak{A}}}\right) = \bigg\{z \in Y, \phantom{rrr} f_{\mathfrak{A}}(z) \leq c   \bigg\}.$$
Then \begin{enumerate}
       \item  On the set of first kind $\mathbb{K}_{1}$, $f_{_{\mathfrak{A}}}$ is of class $\mathcal{C}^{\infty}$.
       \item On the set of first kind $\mathbb{K}_{1}$, $f_{_{\mathfrak{A}}}$ is strictly \textbf{psh.}
       \item For all $c > 0$, the set $L_{c}\left(f_{_{\mathfrak{A}}}\right)$ is homeomorphic to a closed ball of $\mathbb{C}^{n}$.
     \end{enumerate}
\end{proposition}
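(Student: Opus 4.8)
The plan is to treat the three assertions separately, exploiting the fact that on the set of first kind $\mathbb{K}_1$ the function $f_{\mathfrak{A}}$ locally coincides with exactly one of the smooth pieces $f_1,\dots,f_{N+1}$, whereas every seam of the piecewise definition (\ref{La fonction presque psh f}) is confined to $\bigcup_{j=1}^{N}\partial V_j$, hence to $\mathbb{K}_2$. For the first assertion I would argue that a point $z\in\mathbb{K}_1$ never lies on the transition locus between two clauses of (\ref{La fonction presque psh f}): by (\ref{point de 1 espece}), $z$ cannot simultaneously belong to some $\partial V_j$ and to a later domain $V_k$, so in a small neighbourhood of $z$ the active clause is constant and $f_{\mathfrak{A}}$ agrees with a single $f_k$. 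Each $f_k$ with $k\le N$ is $\mathcal{C}^\infty$, being the squared Euclidean norm $\|\psi_k(\cdot)-\psi_k(a_k)\|^2$ read through the biholomorphic chart $\psi_k$ (the constant shift $b_{k-1}$ being irrelevant), while $f_{N+1}$ is $\mathcal{C}^\infty$ off the negligible locus $\{g_1=g_2\}$ by Lemma \ref{lemme de la fonction tilde(f_N+1)}. Hence $f_{\mathfrak{A}}$ is $\mathcal{C}^\infty$ on $\mathbb{K}_1$.

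For the second assertion I would use the same localisation. Plurisubharmonicity and strict plurisubharmonicity are biholomorphic invariants, since the Levi form is conjugated by the holomorphic Jacobian of the change of charts and stays positive definite; therefore each $f_k=\|\psi_k(\cdot)-\psi_k(a_k)\|^2+\textrm{const}$ is strictly \textbf{psh} on $V_k$ for the complex structure $\mathfrak{A}$, and $f_{N+1}$ is strictly \textbf{psh} off $\{g_1=g_2\}$ again by Lemma \ref{lemme de la fonction tilde(f_N+1)}. Since near any point of $\mathbb{K}_1$ the function equals a single such piece, $f_{\mathfrak{A}}$ is strictly \textbf{psh} on $\mathbb{K}_1$.

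The third assertion is the substantial one and I would split it at the threshold $b_N$. Note first that $f_{N+1}\ge b_N$ on $\overline{V_{N+1}}$ with equality only on $\partial V_{N+1}$, so for $0<c\le b_N$ the sublevel set $L_c(f_{\mathfrak{A}})$ is contained in $\overline{\mathfrak{W}}$. In this regime I would proceed by induction on $k$ using the increasing exhaustion $\mathfrak{W}_1\subset\cdots\subset\mathfrak{W}_N=\mathfrak{W}$ of Corollary \ref{Les ouverts Wk}: when $b_{k-1}\le c<b_k$ the set $L_c$ is obtained from the previous ball by adjoining the cap $\{f_k\le c\}$, which is a closed ball in the chart $\psi_k$, attached along a portion of $\partial V_{k-1}$ that is simply connected by condition (\ref{condition 2 de la carte psi}); Proposition \ref{proposition des boules} then keeps the union homeomorphic to a ball. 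For $c>b_N$ one has $L_c(f_{\mathfrak{A}})=\overline{\mathfrak{W}}\cup\{z\in V_{N+1}:f_{N+1}(z)\le c\}$, where $\overline{\mathfrak{W}}$ is homeomorphic to a closed ball (its interior being the ball $\mathfrak{W}$ of Corollary \ref{Les ouverts Wk} and its boundary $\partial\mathfrak{W}\approx\mathcal{S}_{2n-1}$), while by Lemma \ref{lemme de la fonction tilde(f_N+1)} the second set is, in the distinguished chart, a closed annulus $\overline{\mathcal{B}}(a,r_1)-\mathcal{B}(a,\rho)$ whose outer boundary is exactly $\partial V_{N+1}=\partial\mathfrak{W}$. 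Gluing a closed ball to such an annulus along their common boundary sphere yields again a closed ball, giving $L_c(f_{\mathfrak{A}})\approx\overline{\mathcal{B}}$.

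The main obstacle I anticipate is precisely the inductive gluing in the regime $0<c\le b_N$: one must check at each threshold $c=b_k$ that the boundary along which the new cap is attached genuinely meets the hypotheses of Proposition \ref{proposition des boules}, and that the regions $(V_k-V_{k-1})\cap\mathfrak{W}$ are organised by the spherical ordering of Proposition \ref{proposition d'ordre} so that the sublevel set really grows one ball-cap at a time rather than through overlapping pieces. The regime $c>b_N$ and the two local assertions are, by contrast, essentially immediate once the localisation on $\mathbb{K}_1$ and Lemma \ref{lemme de la fonction tilde(f_N+1)} are in hand.
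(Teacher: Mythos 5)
Your proposal follows essentially the same route as the paper's proof: localization on $\mathbb{K}_{1}$ to a single smooth piece for assertions (1) and (2) (with the chart-invariance of the Levi form), and for assertion (3) an induction over the thresholds $b_{k}$ via the exhaustion $\mathfrak{W}_{1}\subset\cdots\subset\mathfrak{W}_{N}$ together with Proposition \ref{proposition des boules}, followed by the gluing of $\overline{\mathfrak{W}}$ to the annulus $L_{c}\bigl(\widetilde{f}_{_{N+1}}\bigr)$ along $\partial V_{_{N+1}}\approx\mathcal{S}_{2n-1}$ when $c>b_{N}$. The only cosmetic difference is that you describe the intermediate sublevel sets as the previous ball with a cap adjoined, whereas the paper writes the same set as $\overline{\mathfrak{W}_{k+1}}$ minus a complementary region before invoking the same proposition.
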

\begin{proof}$ $\\
(1) By recalling the definition of the open set of the points of first kind $\mathbb{K}_{1}$ and by examining the expressions
 of the different functions $f_{k}$,
 we observe clearly  that the function $f_{_{\mathfrak{A}}}$ is of class $\mathcal{C}^{\infty}$ in $\mathbb{K}_{1}$.\\
(2) Since the property of positivity of the Levi form of a function, is independent of the choice of local coordinates, it suffices then
 for a point $z \in V_{k}$ to check this positivity only in the chart $\left( V_{k}, \psi_{k} \right)$. Let then $z \in \mathbb{K}_{1}$, and for
 all $1 \leq k \leq N+1$,
 let $\widetilde{f}_{k} = f \circ \psi_{k}^{-1}$ be the expression of $f_{k}$ in the local coordinates $\zeta = \psi_{k}(z)$.
 By a careful examination of the different expressions of the function $f_{k}$, we obtain:
\begin{enumerate}
  \item [(a)]If $z \in V_{1}\bigcap \mathfrak{W}$, then
  $$
  \widetilde{f}_{1}(\zeta) = \left\|\zeta-a_{1}\right\|^{2}
  $$
 which means that the function $f_{1}$ is strictly \textbf{psh} on $V_{1}\bigcap \mathfrak{W}$.
  \item [(b)]For $ 2 \leq k \leq N$, and $z \in \left(V_{k} - \overline{V_{k-1}}\phantom{.}\right) \bigcap \mathfrak{W}$,  we have
  $$ \widetilde{f}_{k} (\zeta) =  \left| \zeta- \psi_{k}\left( a_{k}\right)\right|^{2} + b_{k-1}.$$
  The function $f_{k}$ in also strictly \textbf{psh} on $\left(V_{k} - \overline{V_{k-1}}\phantom{.}\right) \bigcap \mathfrak{W}$.
  \item [(c)] If $z \in \overline{V_{_{N+1}}}$, we know by lemma \ref{lemme de la fonction tilde(f_N+1)}, that the function
      $$\widetilde{f}_{_{N+1}}(\zeta) = -log\left(\max \left\{\left(\frac{ \left\|\zeta\right\|^{2}-r_{2}^{2}}{r_{1}^{2} - r_{2}^{2}}\right)^{2} ,
 \phantom{r} \left(\frac{Re\langle \zeta,w\rangle}{r_{1}.\|w\|}\right)^{2}\phantom{.}\right\}\right) + b_{_{N}}.$$
  is almost everywhere strictly \textbf{psh} on $\psi_{_{N+1}}\left(V_{_{N+1}}\right)$.
\end{enumerate}
We conclude then by this discussion that on the set of points of first kind $\mathbb{K}_{1}$, the function $f_{\mathfrak{A}}$ is strictly
\textbf{psh}, and therefore its Levi form  given in the local coordinates $\zeta = \psi_{k}(z)$ by
 \begin{equation}\label{La forme de Levi de f}
    \mathscr{L}_{\zeta}(\widetilde{f}_{k})\left[t,t\right] = \sum_{j,l}\frac{\partial^{2}\widetilde{f}_{k}(\zeta)}{\partial\zeta _{j}\partial \overline{\zeta}_{l}}t_{j}\overline{t}_{l}\phantom{rrrrrrr} \textrm{where} \phantom{r}t = \left(t_{1}, ..., t_{n}\right)\in \mathbb{C}^{n}
\end{equation}
is positive defined on $\mathbb{K}_{1}$ as quadratic form of the variable $t \in \mathbb{C}^{n}$.\\
(3) Let $c > 0$, and let the sublevel set
$$
L_{c}(f_{_{\mathfrak{A}}}): = \bigg\{ z \in Y, \phantom{rrrr} f_{_{\mathfrak{A}}}(z) \leq c  \bigg\}.
$$
\textbf{\underline{$1^{st}$ case.}} $\phantom{rr}c = b_{k}.$ \\
By the definition of the number $b_{k}$ given by equation (\ref{Le sup bk}), we have
$$
L_{b_{k}}(f_{_{\mathfrak{A}}}) = \overline{\mathfrak{W}_{k}}.
$$
and then $L_{b_{k}}(f_{_{\mathfrak{A}}})$ is homeomorphic to a closed ball of $\mathbb{C}^{n}$. \\
\textbf{\underline{$2^{nd}$ case.} $\phantom{rr} b_{k} < c < b_{k+1}.$}\\
Let the closed ball $\overline{\mathcal{B}}\left( a_{k+1}, c- b_{k}\right),$  since $\psi_{k+1}\left( V_{k+1}\right)$ is a ball,
then the set
$$ D_{c,k}\left( f_{_{\mathfrak{A}}}\right): =
\psi_{k+1}^{-1}\left( \overline{\mathcal{B}}\left( a_{k+1}, c- b_{k}\right)\bigcap  \overline{\psi_{k+1}\left( V_{k+1}\right)}\right)$$
is homeomorphic to a closed ball of $\mathbb{C}^{n}$. For $b_{k} < c < b_{k+1}$, we can write $L_{c}\left(f_{_{\mathfrak{A}}}\right)$ as follows:
$$
L_{c}\left(f_{_{\mathfrak{A}}}\right) = \overline{{\mathfrak{W}}_{k+1}} -
\bigg(\bigg(\overline{V_{k+1}}-\mathfrak{W}_{k}\bigg) - \bigg( D_{c,k}\left( f_{_{\mathfrak{A}}}\right) - \mathfrak{W}_{k}\bigg)\bigg).
$$
By proposition \ref{proposition des boules}, we deduce that $L_{c}\left(f_{_{\mathfrak{A}}}\right)$ is
homeomorphic to a closed ball of $\mathbb{C}^{n}$.\\
\textbf{\underline{$3^{rd}$ case.} $\phantom{rr} b_{_{N}} < c.$}\\
In this case we have with the notations of lemma \ref{lemme de la fonction tilde(f_N+1)}
\begin{equation}\label{reunion}
    L_{c}\left(f_{_{\mathfrak{A}}}\right) =  L_{b_{N}}\left(f_{_{\mathfrak{A}}}\right)
 \bigcup \psi_{_{N+1}}^{-1} \left(L_{c}\left( \widetilde{f}_{_{N+1}}\right)\right).
\end{equation}
and
\begin{equation}\label{intersection}
    \partial L_{c}\left(f_{_{\mathfrak{A}}}\right) \bigcap \partial \left(\psi_{_{N+1}}^{-1} \left(L_{c}\left( \widetilde{f}_{_{N+1}}\right)\right)\right) =
 \partial V_{_{N+1}} \approx \mathcal{S}_{2n-1}\left(a, r_{1}\right).
\end{equation}
Since $L_{b_{N}}\left(f_{_{\mathfrak{A}}}\right)$ is homeomorphic to the ball
and from lemma \ref{lemme de la fonction tilde(f_N+1)}, $\psi_{_{N+1}}^{-1} \left(L_{c}\left( \widetilde{f}_{_{N+1}}\right)\right)$
is homeomorphic to an annulus,
we deduce from (\ref{reunion}) and (\ref{intersection}) that $L_{c}\left(f_{_{\mathfrak{A}}}\right)$ is homeomorphic to a closed ball of $\mathbb{C}^{n}$.
The proof of proposition \ref{Proprietes de la fonctions f} is then complete.
\end{proof}
\subsection{A vanishing theorem}
\subsubsection{\textbf{A regularization process}}\label{procede de regularisation}$ $\\
In this subsection, we show how the properties of the atlas $\mathfrak{A}$ enable us to introduce on the hypothetic complex manifold
$Y = \left( \mathbb{C}^{n}, \mathfrak{A}\right)$ an appropriate regularization process.\\
Indeed, let $1 \leq j \leq  N+1$ and let $\varepsilon >0$ small enough, and consider the open set
$$
V_{j}^{\varepsilon}: = \bigg\{ z\in V_{j}, \phantom{rr} d\bigg(\psi_{j}(z), \partial\big(\psi_{j}\left( V_{j}\big)\right)  \bigg) > \varepsilon   \bigg\}.
$$
Since the domain $V_{j}$ is assumed to be a ball of $\mathbb{C}^{n}$, then $V_{j}^{\varepsilon}$ is also a ball of $\mathbb{C}^{n}$. \\
 Fix $\varepsilon > 0$ so small that the union of open sets $V_{j}^{\varepsilon}$ cover $\mathbb{C}^{n}$. \\
We want now to associate to the atlas $\mathfrak{A}$ a matrix regularizing  operator. \\
For this, let $\mathcal{B}_{1}$ be the unit ball of $\mathbb{C}^{n}$, and let $\chi \in \mathcal{C}^{\infty}_{0}\big( \mathcal{B}_{1}\big)$
be a cutoff function with $\chi(\zeta) \geq 0$, and $\int\limits_{\mathcal{B}_{1}}\chi(\zeta)d\zeta=1$. We note $\chi_{\varepsilon}(z) =
\chi\left( \frac{z}{\varepsilon} \right)$.\\
Since the open sets $\psi_{j}\left( V_{j} \right)$ are balls of $\mathbb{C}^{n}$, then by translating them if necessarily and independently of one another,
we can always assume that
for all $ j,k \in \left\{ 1,...,N+1\right\}$, and for all $\left( \zeta_{j}, \zeta_{k}\right) \in \psi_{j}\left( V_{j}\right)
\times \psi_{k}\left( V_{k} \right)$, the following condition is fulfilled
\begin{equation}\label{diagonalisation de l'operateur regularisant}
    \zeta_{j} - \zeta_{k} \in \mathcal{B}_{\varepsilon} \Longleftrightarrow j = k\phantom{rrrrrrr} (\mathcal{B}_{\varepsilon}\phantom{r}\textrm{is the
     ball of radius}\phantom{r} \varepsilon).
\end{equation}
That is, in other words, one can translate in $\mathbb{C}^{n}$ the domains of coordinates $\psi_{j}\left(V_{j}\right)$ so far away, that for $j \neq k$,
 the points of  $\psi_{j}\left(V_{j}\right)$ become fairly distant from the points of $\psi_{k}\left(V_{k}\right)$ that
$\zeta_{j} - \zeta_{k} \not\in \mathcal{B}_{\varepsilon}$.\\
 Let $ \phantom{r}\mathbb{L}^{1}_{loc}(Y)$ be the space of locally integrable complex valued functions with respect to Lebeasgue measure $$dz =
 \frac{1}{(2i)^{n}}\bigwedge _{j=1}^{n}\left( d\overline{z}_{j}\wedge dz_{j}\right),$$
 and let the mapping $\widehat{\psi}_{j}: Y \longrightarrow \mathbb{C}^{n}$ given by:
$$
\widehat{\psi}_{j}(z):= \left\{
\begin{split}\label{psi tilde}
  \psi_{j} (z)& \phantom{rrr} \textrm{if}\phantom{r} z \in V_{j}\\
   0 \phantom{rr}& \phantom{rrr} \textrm{if}\phantom{r} z \not\in V_{j}.
\end{split}
\right.
$$
With these notations, we define then a linear $(N+1)\times(N+1)-$matrix operator
$$
\mathcal{R}_{\mathfrak{A}}: \mathbb{L}^{1}_{loc}\left( Y\right)\longrightarrow \big(\mathbb{L}^{1}_{loc}
\left( Y \right)\big)^{\left(N+1\right)^{2}}
$$
$$
\phantom{rrrrrrrr}f \longmapsto \mathcal{R}_{\mathfrak{A}}\left[ f\right] = \bigg(\mathcal{R}_{j,k}
\left[ f\right]\bigg)_{1 \leq j,k \leq N+1}
$$
by
\begin{equation}\label{Operateur matriciel regularisant}
    \mathcal{R}_{j,k}\left[f\right](z) = \varepsilon^{2n}\int\limits_{\xi \in V_{j}^{2\varepsilon}}f(\xi).\chi_{\varepsilon}
    \left(\widehat{\psi}_{k}(z)-\widehat{\psi}_{j} (\xi) \right) d \widehat{\psi}_{j} (\xi).
\end{equation}
\begin{proposition}\label{propriétés de l'operateur de regularisation}$ $\\
For all $f \in \mathbb{L}^{1}_{loc}(Y)$, we have the following properties of $\mathcal{R}_{\mathfrak{A}}\left[ f \right]$:
\begin{enumerate}
 \item The linear operator $\mathcal{R}_{\mathfrak{A}}$ is diagonal, that is
$$ \mathcal{R}_{j,k}\left[f\right] = 0 \Longleftrightarrow j\neq k $$

 \item The linear operator $\mathcal{R}_{\mathfrak{A}}$ is a regularizing operator, that is
 $$
 \mathcal{R}_{j,k}\left[f\right] \in \mathcal{C}^{\infty}(Y).
 $$
\item For all $z \not\in V_{k}^{\varepsilon}$, we have
$$\mathcal{R}_{k,k}\left[f\right](z)=0$$
which means that $supp \left(\mathcal{R}_{k,k}\left[f\right]\right)\subseteq \overline{V_{k}^{\varepsilon}}.$
\item By the following change of variables $\eta = \widehat{\psi}_{k}(z)-\widehat{\psi}_{k} (\xi)$, we obtain
$$
\mathcal{R}_{k,k}\left[f\right](z) = \varepsilon^{2n}\int\limits_{\eta \in \mathcal{B}_{\varepsilon}}
\left(f\circ \psi_{k}^{-1}\right)\left(\widehat{\psi}_{k}(z)-\eta\right).\chi_{\varepsilon}\left( \eta \right) d\eta.
$$
    \item Let $\widetilde{f}_{k}= f \circ \psi_{k}^{-1}$ denote the expression of the function $f$ in the local coordinates $\zeta = \psi_{k}(z)$,
     and let $\widetilde{\mathcal{R}_{k,k}\left[f\right]}= \mathcal{R}_{k,k}\left[f\right]\circ \psi_{k}^{-1}$ denote the
      expression of the function $\mathcal{R}_{k,k}\left[f\right]$ in the same coordinates $\zeta = \psi_{k}(z)$,
    \begin{displaymath}
\xymatrix {V_{k} \ar[d]_{\psi_{k}} \ar[r]^{f\phantom{r}}& \mathbb{C}  \\
\psi_{k}\left( V_{k}\right) \ar[ur]_{\widetilde{f}_{k}} & }
\phantom{rrrrrrrr}
\xymatrix {V_{k} \ar[d]_{\psi_{k}} \ar[r]^{\mathcal{R}_{k,k}\left[f\right]\phantom{rr}}& \mathbb{C} \\
\psi_{j}\left( V_{j}\right) \ar[ur]_{\widetilde{\mathcal{R}_{k,k}\left[f\right]}} & }
\end{displaymath}
Then
\begin{equation}\label{l'expression de Fj dans la carte Vj}
    \widetilde{\mathcal{R}_{k,k}\left[f\right]}(\zeta) = \varepsilon^{2n}\int\limits_{\eta \in \mathcal{B}_{\varepsilon}}
    \widetilde{f}_{k}(\zeta-\eta).\chi\left( \frac{\eta}{\varepsilon}\right)d\eta.
\end{equation}
\end{enumerate}
Using (\ref{l'expression de Fj dans la carte Vj}), we can write $\mathcal{R}_{k,k}$ in terms of convolution operation as follows
\begin{equation}\label{formule de convolution}
    \mathcal{R}_{k,k}\left[f\right] = \bigg(\left(f \circ\psi_{k}^{-1}\right)\ast \chi_{\varepsilon}\bigg)\circ \psi_{k}.
\end{equation}
\end{proposition}
\begin{proof}$ $\\
Property (1) follows from the fact that $\chi_{\varepsilon}$ is of class $\mathcal{C}^{\infty}$. \\
Property (2) is a consequence of condition (\ref{diagonalisation de l'operateur regularisant}) an the fact that
$supp \chi_{\varepsilon} \subset \mathcal{B}_{\varepsilon}.$\\
Property (3) holds because to compute $\mathcal{R}_{k,k}\left[f\right]$ we integrate on $\xi\in V_{k}^{2\varepsilon}$ and if $z \not\in V_{k}^{\varepsilon}$, then
 $\chi_{\varepsilon}\left(\widehat{\psi}_{k}(z) - \widehat{\psi}_{k}(\xi)\right) = 0$ and therefore $\mathcal{R}_{k,k}\left[f\right] = 0$.\\
 Properties (4) and (5) follow from the definition of $\mathcal{R}_{k,k}$ given in (\ref{Operateur matriciel regularisant}).
\end{proof}
\begin{corollary}\label{derivations du regularisé}
Fix a point $z_{0} \in \partial V_{k}^{\varepsilon}$, and let $z \in V_{k}^{\varepsilon}$. Then for all multi-indexes
$\alpha , \beta \in \mathbb{N}^{n}$ we obtain
$$ \lim_{z \longrightarrow z_{0}} \partial_{z}^{\alpha}\partial_{\overline{z}}^{\beta}\mathcal{R}_{k,k}\left[f\right](z) =  0.$$
\end{corollary}
\begin{proof}
This is a consequence of properties (1) and (3) of the operator $\mathcal{R}_{\mathfrak{A}}$.
\end{proof}
We can now state the main result of this section.
\begin{theorem}\label{La variete Y est pseudoconvexe}$ $\\
The hypothetic complex analytic manifold
$Y =(\mathbb{C}^{n},\mathfrak{A})$ is strictly pseudoconvex.
\end{theorem}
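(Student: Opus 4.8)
The plan is to exhibit a function $\Psi : Y=(\mathbb{C}^{n},\mathfrak{A})\longrightarrow\left[0,+\infty\right[$ that is of class $\mathcal{C}^{\infty}$, exhaustive and strictly \textbf{psh}; the existence of such a function is by definition what strict pseudoconvexity demands. The raw material is the function $f_{_{\mathfrak{A}}}$ of \eqref{La fonction presque psh f}, which by Proposition \ref{Proprietes de la fonctions f} is already exhaustive (its sublevel sets $L_{c}(f_{_{\mathfrak{A}}})$ are homeomorphic to closed balls, hence compact), of class $\mathcal{C}^{\infty}$ and strictly \textbf{psh} on the open full-measure set $\mathbb{K}_{1}$ of points of first kind, and fails to be even continuous only on the negligible set $\mathbb{K}_{2}\subset\bigcup_{j=1}^{N}\partial V_{j}$. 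The regularizing operator $\mathcal{R}_{\mathfrak{A}}$ constructed above is exactly the device designed to cure this defect while respecting the complex structure $\mathfrak{A}$ chart by chart.

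Concretely, I would set $\Psi:=\sum_{k=1}^{N+1}\mathcal{R}_{k,k}[f_{_{\mathfrak{A}}}]$. Each summand is of class $\mathcal{C}^{\infty}$ and supported in $\overline{V_{k}^{\varepsilon}}$; since by the corollary above all derivatives $\partial_{z}^{\alpha}\partial_{\overline{z}}^{\beta}\mathcal{R}_{k,k}[f_{_{\mathfrak{A}}}]$ vanish on $\partial V_{k}^{\varepsilon}$, extension by zero produces a function of class $\mathcal{C}^{\infty}$ on all of $Y$, so $\Psi\in\mathcal{C}^{\infty}(Y)$. For strict plurisubharmonicity I would read each summand through the convolution formula \eqref{formule de convolution}: in the chart $(V_{k},\psi_{k})$ one has $\mathcal{R}_{k,k}[f_{_{\mathfrak{A}}}]=\big((f_{_{\mathfrak{A}}}\circ\psi_{k}^{-1})\ast\chi_{\varepsilon}\big)\circ\psi_{k}$, so on $V_{k}^{\varepsilon}$ the Levi form of $\mathcal{R}_{k,k}[f_{_{\mathfrak{A}}}]$ is the mollification, by the nonnegative kernel $\chi_{\varepsilon}$, of the Levi form of $f_{_{\mathfrak{A}}}$ read in that chart. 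As the latter is bounded below by a positive constant on the compact parts of $\mathbb{K}_{1}$ by Proposition \ref{Proprietes de la fonctions f}(2), each $\mathcal{R}_{k,k}[f_{_{\mathfrak{A}}}]$ is strictly \textbf{psh} on the interior $V_{k}^{\varepsilon}$ of its support and \textbf{psh} (Levi form $\ge 0$, and $=0$ off the support) everywhere else. Since the $V_{k}^{\varepsilon}$ cover $\mathbb{C}^{n}$, at each point one summand contributes a positive-definite Levi form and the others a nonnegative one; Levi forms being additive, $\Psi$ is strictly \textbf{psh} on $Y$. For exhaustiveness I would note that all summands are nonnegative (convolution of $f_{_{\mathfrak{A}}}\ge 0$ with $\chi_{\varepsilon}\ge 0$), that those with $k\le N$ are compactly supported hence bounded, and that $\mathcal{R}_{N+1,N+1}[f_{_{\mathfrak{A}}}]$ inherits from $\widetilde{f}_{_{N+1}}$ of Lemma \ref{lemme de la fonction tilde(f_N+1)} the blow-up to $+\infty$ as $z$ approaches the ends of $Y$ corresponding to $S_{2n-2}(0,r_{2})$; thus $\Psi\ge\mathcal{R}_{N+1,N+1}[f_{_{\mathfrak{A}}}]\to+\infty$ at the ends and every $\{\Psi\le c\}$ is compact.

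The genuinely delicate step, which I expect to be the main obstacle, is justifying that the chart-wise convolution $(f_{_{\mathfrak{A}}}\circ\psi_{k}^{-1})\ast\chi_{\varepsilon}$ is really (weakly) plurisubharmonic on $V_{k}$, i.e.\ that $f_{_{\mathfrak{A}}}$ restricted to $V_{k}$ and read in the coordinates $\zeta=\psi_{k}(z)$ remains plurisubharmonic across the discontinuity set $\mathbb{K}_{2}\cap V_{k}$. Mollification in a single chart is not a biholomorphic invariant, and the $\varepsilon$-averages unavoidably straddle the jump hypersurfaces $\partial V_{j}\cap V_{k}$; one must therefore show that these jumps are compatible with plurisubharmonicity, namely upper semicontinuity with the larger value attained on the jump locus together with a nonnegative singular part in the distributional Levi form. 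This is precisely where the whole architecture of the construction enters: the total order $(\mathfrak{A},\preceq)$, the nesting of the sublevel balls, the placement of the points $a_{k}$ of second kind on $\partial V_{k-1}\cap V_{k}$, and the additive constants $b_{k}=\sup f_{k}$ are arranged so that, locally near a jump, $f_{_{\mathfrak{A}}}$ coincides with the maximum of the two smooth strictly \textbf{psh} pieces involved. Since the maximum of plurisubharmonic functions is plurisubharmonic, weak plurisubharmonicity across $\mathbb{K}_{2}$ would follow and the computation above would go through unchanged; verifying this ``maximum structure'' chart by chart is the technical heart of the argument.
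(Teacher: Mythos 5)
Your proposal follows essentially the same route as the paper: the paper also takes $\Psi=\mathrm{trace}\left(\mathcal{R}_{\mathfrak{A}}[f_{_{\mathfrak{A}}}]\right)=\sum_{k}\mathcal{R}_{k,k}[f_{_{\mathfrak{A}}}]$, proves smoothness via Corollary \ref{derivations du regularis�}, strict plurisubharmonicity by passing the Levi form under the convolution via (\ref{l'expression de Fj dans la carte Vj}), and exhaustiveness from the behaviour of the $V_{_{N+1}}$ term. The one point worth recording is that the ``delicate step'' you single out --- that the chart-wise mollification of $f_{_{\mathfrak{A}}}$ is genuinely plurisubharmonic despite the jumps across $\mathbb{K}_{2}$, which requires the distributional Levi form to have no negative singular part on the jump locus --- is not addressed in the paper either: the paper simply differentiates under the integral sign and invokes almost-everywhere positivity of $\mathscr{L}(\widetilde{f}_{k})$, so the ``maximum structure'' verification you propose as the technical heart is left uncarried-out in both your sketch and the published argument.
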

Before proving theorem \ref{La variete Y est pseudoconvexe}, let us deduce the following corollary.
\begin{corollary}\label{theoreme d'annulation}\textbf{(Vanishing theorem.)}$ $\\
Let $X$ be an $\mathcal{A}-$submanifold of $\mathcal{S}_{2n}$, and let $\mathcal{H}^{0,1}\left(X,\mathbb{C}^{n}\right)$ be its Dolbeault group of
cohomology of type $(0,1)$ with
coefficients in $\mathbb{C}^{n}$. Then $$\mathcal{H}^{0,1}\left(X,\mathbb{C}^{n}\right) = \left\{ 0\right\}.$$
\end{corollary}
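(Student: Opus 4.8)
The plan is to deduce the vanishing directly from Theorem \ref{La variete Y est pseudoconvexe} together with the solution of the Levi problem. First I would recall that, by the very construction of the atlas $\mathfrak{A}$ in Definition \ref{New C^n complex}, the homeomorphism $h: X \longrightarrow Y = (\mathbb{C}^{n}, \mathfrak{A})$ is by definition a biholomorphism: the charts of $\mathfrak{A}$ are $\psi_{\jmath} = \widetilde{\varphi}_{\jmath} \circ h^{-1}$, so $h$ reads as the identity in these coordinates and hence transports the complex structure $\mathcal{A}/X$ of $X$ onto the structure $\mathfrak{A}$ of $Y$. Consequently $X$ and $Y$ are biholomorphically equivalent as complex analytic manifolds.

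Next, since strict pseudoconvexity is a biholomorphic invariant (it is expressed through the existence of a smooth exhaustive strictly plurisubharmonic function, a notion manifestly preserved under biholomorphisms), Theorem \ref{La variete Y est pseudoconvexe} gives that $Y$ is strictly pseudoconvex, whence $X$ itself is strictly pseudoconvex. In particular $X$ carries a smooth exhaustive strictly \textbf{psh} function $\Psi: X \longrightarrow \left[0, +\infty\right[$.

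I would then invoke Grauert's solution of the Levi problem: a complex manifold admitting a smooth exhaustive strictly plurisubharmonic exhaustion function is Stein. Hence $X$ is a Stein manifold, and by Cartan's Theorem B the higher cohomology of any coherent analytic sheaf on $X$ vanishes. Applying this to the coherent sheaf $\mathcal{O}^{n}$ (equivalently, to $\mathcal{O}$ componentwise) gives $\mathcal{H}^{1}(X, \mathcal{O}^{n}) = \left\{0\right\}$. Finally, by Dolbeault's isomorphism $\mathcal{H}^{0,1}(X, \mathbb{C}^{n}) \cong \mathcal{H}^{1}(X, \mathcal{O}^{n})$, we obtain $\mathcal{H}^{0,1}(X, \mathbb{C}^{n}) = \left\{0\right\}$, which is the claim.

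The substance of this corollary is already packaged in the theorem it follows from; the only points requiring care are the biholomorphic invariance of the pseudoconvexity property and the correct chain Grauert $\Rightarrow$ Stein $\Rightarrow$ Cartan B $\Rightarrow$ Dolbeault. The genuine difficulty lies upstream, in proving Theorem \ref{La variete Y est pseudoconvexe} — namely in regularizing the almost everywhere strictly \textbf{psh} function $f_{\mathfrak{A}}$ of Proposition \ref{Proprietes de la fonctions f} into a globally smooth exhaustive strictly \textbf{psh} function $\Psi$ on $Y$, using the matrix operator $\mathcal{R}_{\mathfrak{A}}$ of Subsection \ref{procede de regularisation} while controlling its behaviour across the negligible set $\mathbb{K}_{2}$ of points of second kind. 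Once that smoothing is in hand, the present corollary is immediate.
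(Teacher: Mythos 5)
Your proposal is correct and follows essentially the same route as the paper: the paper's own proof simply notes that $X$ is by construction biholomorphic to $Y=(\mathbb{C}^{n},\mathfrak{A})$, which is strictly pseudoconvex by Theorem \ref{La variete Y est pseudoconvexe}, and then invokes Cartan's Theorem B (stated in the appendix precisely in the form ``strongly pseudoconvex $\Rightarrow \mathcal{H}^{0,q}=\{0\}$''). You merely unpack that citation into the explicit chain Grauert $\Rightarrow$ Stein $\Rightarrow$ Cartan B $\Rightarrow$ Dolbeault, which is the same argument spelled out in more detail.
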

\begin{proof}(of corollary)\\
The hypothetic complex manifold $Y = \left(\mathbb{C}^{n}, \mathfrak{A}\right)$ is by construction biholomorphic to the $\mathcal{A}-$submanifold $X$. The corollary follows from Cartan's theorem B; see \cite{12}.
\end{proof}
\begin{proof}(of theorem \ref{La variete Y est pseudoconvexe})$ $\\
Let the function $f_{_{\mathfrak{A}}}$ defined by (\ref{La fonction presque psh f}). Since $f_{_{\mathfrak{A}}}$ is almost everywhere of
class $\mathcal{C}^{\infty}$
and then locally integrable,
 we can then apply to $f_{_{\mathfrak{A}}}$ the regularizing operator $\mathcal{R}_{\mathfrak{A}}$ defined by equation
 (\ref{Operateur matriciel regularisant}).
For $\varepsilon > 0$ small enough, let then the function
$$\Psi : Y = \left( \mathbb{C}^{n},\mathfrak{A} \right) \longrightarrow \left[ 0 , + \infty \right[$$ defined by
\begin{equation}\label{La fonction phi}
    \Psi(z) = trace \left( \mathcal{R}_{\mathfrak{A}}\left[f\right]\right)(z).
\end{equation}
Let $z\in V_{k}^{\varepsilon}$ and consider the following subset of $\left\{ 1 ,2,...,N \right\}$
 $$ Ind(z): = \bigg\{ k, \phantom{rrr} \textrm{with}\phantom{r} 1 \leq k \leq N, \phantom{rrr} z \in V_{k}^{\varepsilon}  \bigg\}.$$
We can then write (\ref{La fonction phi}) as follows
\begin{equation*}
\begin{split}
    \Psi(z) & = trace \left( \mathcal{R}_{\mathfrak{A}}\left[f\right]\right)(z) \\
    & =\sum_{k=1}^{N+1} \mathcal{R}_{k,k}\left[f\right](z) \\
 & = \sum_{k=1}^{N+1}\varepsilon^{2n}\int\limits_{\xi \in V_{k}^{2\varepsilon}}f(\xi).\chi_{\varepsilon}\left(\widehat{\psi}_{k}(z)-
 \widehat{\psi}_{k} (\xi) \right) d \widehat{\psi}_{k} (\xi)\\
 & =  \varepsilon^{2n}\sum_{k \in Ind(z)}\phantom{r}\int\limits_{\xi \in V_{k}^{2\varepsilon}}f(\xi).
      \chi_{\varepsilon}\left(\widehat{\psi}_{k}(z)-\widehat{\psi}_{k} (\xi) \right)
       d \widehat{\psi}_{k}.
 \end{split}
\end{equation*}
We claim that
\begin{enumerate}
  \item $\Psi$ is a $\mathcal{C}^{\infty}$ function on $Y = \left( \mathbb{C}^{n} , \mathfrak{A}\right)$.
  \item $\Psi$ is strictly \textbf{psh} on $Y = \left( \mathbb{C}^{n} , \mathfrak{A}\right)$.
  \item $\Psi$ is an exhaustion of $Y = \left( \mathbb{C}^{n},\mathfrak{A} \right)$, that is, for all $c > 0$,  the sublevel set
  $$
  L_{c}(\Psi):= \bigg\{z\in Y, \phantom{rrrr} \Psi(z) < c   \bigg\}
  $$
  is relatively compact.
\end{enumerate}
Indeed. \\
1) Fix $z_{0}\in Y = \left(\mathbb{C}^{n}, \mathfrak{A}\right)$. To check that $\Psi$ is a $\mathcal{C}^{\infty}$ function at the point $z_{0}$,
we consider two situations:
\begin{enumerate}
  \item [(a)] For all $ 1 \leq  k  \leq  N+1$, $z_{0} \not\in\partial V_{k}^{\varepsilon}$, then there exists a
  open neighborhood $\displaystyle W_{z_{0}}\subset \bigcap_{k \in Ind(z)} V_{k}$ of $z_{0}$ such that, for all $z \in  W_{z_{0}}$:
   $$  \Psi (z)= \sum_{k \in Ind(z_{0})}\mathcal{R}_{k,k}\left[f\right](z).$$
       In this case, $\Psi$ is $\mathcal{C}^{\infty}$ on $W_{z_{0}}$ by property (1) of $\mathcal{R}_{_{\mathfrak{A}}}$,
       (see proposition \ref{propriétés de l'operateur de regularisation}).
  \item [(b)] There exists $ 1 \leq  j  \leq  N+1$ such that $z_{0} \in\partial V_{j}^{\varepsilon}$. In this case, there exists a
   neighborhood $ W_{z_{0}}$ such that for all $z \in W_{z_{0}}$
   \begin{equation*}
   \Psi(z)= \left\{
    \begin{split}
      \sum_{k \in Ind(z_{0})}\mathcal{R}_{k,k}\left[f\right](z) & \phantom{rrr} \textrm{if} \phantom{r} z \in  W_{z_{0}} - V_{j} \\
       \mathcal{R}_{j,j}\left[f\right](z) + \sum_{k \in Ind(z_{0})}\mathcal{R}_{k,k}\left[f\right](z) &
       \phantom{rrr} \textrm{if} \phantom{r} z \in  W_{z_{0}} \bigcap V_{j}.
    \end{split}
    \right.
   \end{equation*}
 Since we have by corollary \ref{derivations du regularisé},
 $\displaystyle\lim_{z \longrightarrow z_{0}} \partial_{z}^{\alpha}\partial_{\overline{z}}^{\beta}\mathcal{R}_{j,j}\left[f\right](z) = 0$, and since
 $ \displaystyle\sum_{k \in Ind(z_{0})}\mathcal{R}_{k,k}\left[f\right](z)$  is $\mathcal{C}^{\infty}$  on $W_{z_{0}}$, then
 $\Psi$ is $\mathcal{C}^{\infty}$
  on $W_{z_{0}}$. By letting $z_{0}$ running over $Y = \left(\mathbb{C}^{n}, \mathfrak{A}\right)$,
  we obtain then from (a) and (b), that $\Psi$ is $\mathcal{C}^{\infty}$ on $Y = \left(\mathbb{C}^{n}, \mathfrak{A}\right)$.
\end{enumerate}
2) Let $z\in Y = \left(\mathbb{C}^{n}, \mathfrak{A}\right)$, and let
\begin{equation*}
    \begin{split}
        \Psi(z) & = trace \left( \mathcal{R}_{\mathfrak{A}}\left[f\right]\right)(z) \\
      & = \varepsilon^{2n}\sum_{k \in Ind(z)}\phantom{r}\int\limits_{\xi \in V_{k}^{2\varepsilon}}f(\xi).
      \chi_{\varepsilon}\left(\widehat{\psi}_{k}(z)-\widehat{\psi}_{k} (\xi) \right)
       d \widehat{\psi}_{k}.
     \end{split}
\end{equation*}
Since the property of plurisubharmonicity is independent of the choice of local coordinates, it suffices then to check the plurisubharmonicity of
 each function  $$\Psi_{k}(z): =  \mathcal{R}_{k,k}^{\varepsilon}\left[f\right](z) =
 \int\limits_{\xi \in V_{k}^{2\varepsilon}}f(\xi).\chi_{\varepsilon}\left(\widehat{\psi}_{k}(z)-\widehat{\psi}_{k} (\xi) \right)
 d \widehat{\psi}_{k}$$ only in the chart $\left(V_{k}, \psi_{k}\right)$. Let then $\widetilde{f}_{k}$ and $\widetilde{\Psi}_{k}$
 be the expressions of $f$ and $\Psi_{k}$ respectively in the local coordinates $\zeta = \psi_{k}(z)$.
 We have by formula (\ref{l'expression de Fj dans la carte Vj})
$$
\widetilde{\Psi}_{k}(\zeta) =
\varepsilon^{2n}\int\limits_{\eta \in \mathcal{B}_{\varepsilon}} \widetilde{f}_{k}(\zeta-\eta).\chi\left( \frac{\eta}{\varepsilon}\right)d\eta.
$$
By differentiating under the sign sum, we obtain
$$
\frac{\partial^{2}\widetilde{\Psi}_{k}}{\partial \zeta_{j}\partial \zeta_{\ell}}(\zeta) =
\varepsilon^{2n}\int\limits_{\eta \in \mathcal{B}_{\varepsilon}}
\frac{\partial^{2}\widetilde{f}_{k}}{\partial \zeta_{j}\partial \zeta_{\ell}}(\zeta-\eta).\chi\left( \frac{\eta}{\varepsilon}\right)d\eta.
$$
which means that for all $\phantom{r}t = \left(t_{1}, ..., t_{n}\right)\in \mathbb{C}^{n}$
\begin{equation}\label{La forme de Levi de phi}
    \mathscr{L}_{\zeta}(\widetilde{\Psi}_{k})\left[t,t\right] =
    \varepsilon^{2n}\int\limits_{\eta \in \mathcal{B}_{\varepsilon}}
    \mathscr{L}_{(\zeta-\eta)}(\widetilde{f}_{k})\left[t,t\right].\chi\left( \frac{\eta}{\varepsilon}\right)d\eta.
\end{equation}
Since by (\ref{La forme de Levi de f}), the Levi form $\mathscr{L}_{(\zeta-\eta)}(\widetilde{f}_{k})\left[t,t\right]$
is positive defined on $\mathbb{K}_{1}$, that is, almost everywhere positive defined, then by (\ref{La forme de Levi de phi}), the Levi form of $\Psi_{k}$
 is positive defined in $V_{k}^{\varepsilon}$. Hence the Levi form of $\Psi$
 is positive defined in the open neighborhood $W_{z}$ of the point $z$
$$W_{z} = \bigcap_{k \in Ind(z)} V_{k}^{\varepsilon}.$$
Therefore the function $\Psi$ which is $\mathcal{C}^{\infty}$ is strictly \textbf{psh} on $W_{z}$.
By letting $z$ running over $Y= \left(\mathbb{C}^{n}, \mathfrak{A}\right)$, we obtain that the function $\Psi$ is strictly \textbf{psh}
on $Y = \left(\mathbb{C}^{n}, \mathfrak{A}\right)$.\\
3) Let $c > 0$, and assume by contradiction that the sublevel set
$$
L_{c}(\Psi): = \bigg\{z\in Y, \phantom{rrrr} \Psi(z) < c    \bigg\}
$$
is not relatively compact. This implies that $L_{c}(\Psi)$ is not bounded. Since the domain $V_{_{N+1}}$ of the distinguished
chart $\left( V_{_{N+1}}, \psi_{_{N+1}}\right)$ is the only one which is non bounded, and since $f_{_{\mathfrak{A}}}$ is already an exhaustion of
$Y = \left( \mathbb{C}^{n}, \mathfrak{A}\right)$, and is continuous on $V_{_{N+1}}$, then there exists a sequence of points $z_{m} \in V_{_{N+1}}$,
such that
$$\lim_{m \longrightarrow +\infty}\left\| z_{m}\right\|  = +\infty$$ and
\begin{equation}\label{1}
    \lim_{m \longrightarrow +\infty}\left\{\min_{\xi \in \mathcal{B}\left( z_{m}, \varepsilon\right)} f_{_{\mathfrak{A}}}(\xi)\right\}  = +\infty
\end{equation}
 and
 \begin{equation}\label{2}
    \textrm{For all} \phantom{r} m \in\mathbb{N},\phantom{rrrr}  \Psi\left(z_{m}\right) < c .
 \end{equation}
Taking into account the definition of $\Psi$, we obtain from (\ref{2}) by an elementary calculus
\begin{equation*}
    \begin{split}
    c & \geq \Psi\left(z_{m}\right)\\
      & \geq \mathcal{R}_{_{N+1,N+1}}\left[ f_{_{\mathfrak{A}}} \right]\left(z_{m}\right) \\
      & \geq  \varepsilon^{2n}\int\limits_{\eta \in \mathcal{B}_{\varepsilon}}\left(f_{_{\mathfrak{A}}}\circ\psi_{_{N+1}}^{-1}\right)\left(\psi_{_{N+1}}
      \left(z_{m}\right)-\eta\right).\chi_{\varepsilon}\left( \eta \right) d\eta \\
      & \geq \min_{\xi \in \mathcal{B}\left( z_{m}, \varepsilon\right)} f_{_{\mathfrak{A}}}(\xi).
      \bigg(\varepsilon^{2n}\int\limits_{\eta \in \mathcal{B}_{\varepsilon}}\chi_{\varepsilon}\left( \eta \right) d\eta\bigg) \\
      & \geq \min_{\xi \in \mathcal{B}\left( z_{m}, \varepsilon\right)} f_{_{\mathfrak{A}}}(\xi).
    \end{split}
\end{equation*}
which means that for all $m \in\in \mathbb{N}$
$$ c  \geq \min_{\xi \in \mathcal{B}\left( z_{m}, \varepsilon\right)} f_{_{\mathfrak{A}}}(\xi).$$
But this contradicts (\ref{1}). The hypothesis that the sublevel set $L_{c}(\Psi)$ is not relatively compact, is then false.\\
The previous properties of $\Psi$ mean that $\Psi$ is an exhaustive $\mathcal{C}^{\infty}$ strictly \textbf{psh} function
on $\left( \mathbb{C}^{n}, \mathfrak{A} \right)$. The hypothetic complex manifold
$Y = \left( \mathbb{C}^{n} , \mathfrak{A}\right)$ is then strictly pseudoconvex. The proof of theorem
\ref{La variete Y est pseudoconvexe} is then complete.
\end{proof}
\section{\textbf{Proof of the main theorem}}
We are now in a position to prove the main theorem \ref{equiv}.
We have already observed in remark \ref{remarque apres equiv} that $\{Hath\}_{1} \Longleftrightarrow \{Necs\}_{1}$ holds, and then we have to prove
theorem \ref{equiv} only for $n \geq 2$.
We begin first by proving the implication $$\{Hath\}_{n} \Longrightarrow \{Necs\}_{n}$$ then we prove the converse
$$\{Necs\}_{n} \Longrightarrow\{Hath\}_{n}.$$
\subsection{\textbf{$\left\{Hath\right\}_{n} \Longrightarrow \left\{Necs\right\}_{n}$}}
\begin{proof}$ $\\
For $n \geq 2$, we prove $\{Hath\}_{n} \Longrightarrow \{Necs\}_{n}$  by contradiction, that is, by showing that if $\{Necs\}_{n}$ is false, which
means that the sphere $S_{2n}$ admits a complex analytic structure, then $\{Hath\}_{n}$ implies that the homeomorphisms defining the charts of this
structure are constant, which contradicts the fact that the change of charts is biholomorphic.
The proof will be done in three steps.\\
\textbf{\underline{$1^{st}$step.}}$ $ \\
Assume that proposition $\{Necs\}_{n}$ is false, and let the hypothetic complex atlas
$$\mathcal{A} = \bigg\{\left(U_{\jmath},\varphi_{\jmath} \right),\phantom{rrr} \jmath\in \mathcal{J}\bigg\}$$
of $\mathcal{S}_{2n}$ as considered in subsubsection \ref{L'atlas A}.  In this first step, we want to construct (without changing the
hypothetic complex structure of $S_{2n}$) an appropriate complex atlas $\mathcal{F}$ of $S_{2n}$ finer than $\mathcal{A}$
 $$
 \mathcal{F}= \bigg\{\left(W_{\gamma},\Phi_{\gamma} \right),\phantom{rrr} \gamma\in \mathcal{L}\bigg\}
 $$
 and to attach to each pairing of different charts $\big(\left(W_{\alpha},\Phi_{\alpha}\right),\left(W_{\beta},\Phi_{\beta}\right)\big)\in \mathcal{F}^{2}$
 satisfying the condition
  \begin{equation}\label{condition sur les cartes V alpha et V beta}
    W_{\alpha}\bigcap W_{\beta} \neq \emptyset
 \end{equation}
an $\mathcal{A}-$submanifold
$X_{\alpha,\beta}$ of $\mathcal{S}_{2n}$. The new atlas $\mathcal{F}$ will be constructed as follows: \\
First, recover each ball $\varphi_{\jmath}\left( U_{\jmath}\right)\subset \mathbb{C}^{n}$ by all smaller balls $\mathcal{B}^{\jmath}_{(a,r)}$ which
are centered at points $a \in \varphi_{\jmath}\left(U_{\jmath} \right)$ and which have radii  $r > 0$, such that $\mathcal{B}^{\jmath}_{(a,r)}
\subset\subset \varphi_{\jmath}\left( U_{\jmath}\right).$ Every ball $\varphi_{\jmath}\left( U_{\jmath}\right)$ becomes then union of smaller balls
$\mathcal{B}^{\jmath}_{(a,r)}$
$$\varphi_{\jmath}\left( U_{\jmath}\right) = \displaystyle\bigcup_{(a,r)} \mathcal{B}^{\jmath}_{(a,r)}.$$
Observe that the new family of balls $\mathcal{B}^{\jmath}_{(a,r)}$ is indexed by the following  set
$$
\mathcal{L}:= \bigg\{(\jmath,a,r), \phantom{rr} \textrm{such that} \phantom{r}\jmath\in \mathcal{J}, \phantom{r}a \in
\varphi_{\jmath}\left( U_{\jmath}\right),\phantom{r} r>0 ,\phantom{r}\textrm{with}\phantom{r} B^{\jmath}_{(a,r)}
\subset\subset \varphi_{\jmath}\left( U_{\jmath}\right)\bigg\}
$$
and that by the condition $B^{\jmath}_{(a,r)}\subset \subset \varphi_{\jmath}\left( U_{\jmath}\right)$, the set $\mathcal{L}$ is not
 a cartesian product.\\
We define then the new complex charts  $\bigg(W_{(\jmath,a,r)} , \Phi_{(\jmath,a,r)} \bigg)$ of the sphere $S_{2n}$,
by setting for every ball $\mathcal{B}^{\jmath}_{(a,r)}\subset\subset \varphi_{\jmath}\left( U_{\jmath}\right)$:
\begin{equation*}
\left\{
    \begin{split}
      W_{(\jmath,a,r)} & =  \varphi_{\jmath}^{-1}\left( \mathcal{B}^{\jmath}_{(a,r)}\right)\\
      \Phi_{(\jmath,a,r)}  & = \varphi_{\jmath}\big/W_{(\jmath,a,r)}.
    \end{split}
    \right.
\end{equation*}
Hence, we obtain a new complex atlas $\mathcal{F}$ of $S_{2n}$ finer than $\mathcal{A}$
 $$\mathcal{F}:= \bigg\{\left(W_{(\jmath,a,r)},\Phi_{(\jmath,a,r)} \right),\phantom{rrr} (\jmath,a,r)\in \mathcal{L}\bigg\}.$$
Observe that by construction of the charts $ \left(W_{(\jmath,a,r)}, \Phi_{(\jmath,a,r)}\right)$, we have $U_{\jmath} =
\displaystyle\bigcup_{(a,r)} W_{(\jmath,a,r)} $.\\
\textbf{\underline{Notations}.}
To simplify the notations, we note every triplet $(\jmath,a,r) \in \mathcal{L}$ by one Greek character, which enables us to write the new
atlas $\mathcal{F}$
simply as:
$$\mathcal{F}= \bigg\{\big(W_{\gamma},\Phi_{\gamma} \big),\phantom{rrr} \gamma\in \mathcal{L}\bigg\}.$$
Now fix a pairing of charts $\big(\left(W_{\alpha},\Phi_{\alpha}\right),
\left(W_{\beta},\Phi_{\beta}\right)\big)$
satisfying condition (\ref{condition sur les cartes V alpha et V beta}), that is
 $$ W_{\alpha}\bigcap W_{\beta}\neq \phi$$
and collect all other charts $\big(W_{\gamma},\Phi_{\gamma}\big) \in \mathcal{F}$ whose domain $W_{\gamma}$ does not intersect both
$W_{\alpha}$ and $W_{\beta}$, that is, such that
  \begin{equation}\label{conditions du recouvrement}
    W_{\gamma}\bigcap W_{\alpha}\bigcap W_{\beta} = \emptyset.
  \end{equation}
We obtain then a sub-atlas $\mathcal{F}_{\alpha, \beta} \subset \mathcal{F}$ defined by
$$
\mathcal{F}_{\alpha, \beta}:=  \bigg\{ \left(W_{{\alpha}},\Phi_{\alpha}\right) , \left(W_{\beta},\Phi_{\beta}\right)\bigg\}
\bigcup_{\gamma \neq \alpha,\beta}\bigg\{ \left(W_{\gamma},\Phi_{\gamma} \right) \in \mathcal{F},\phantom{rr}\textrm{with} \phantom{r}W_{\gamma}
\phantom{r}
\textrm{satisfying} \phantom{r} (\ref{conditions du recouvrement})\bigg\}.
$$
Let
\begin{equation}\label{variété X_alpha,beta}
X_{\alpha,\beta}:= \bigcup_{\left(W_{\gamma}, \Phi_{\gamma} \right)\in \mathcal{F}_{\alpha,\beta}} W_{\gamma}.
\end{equation}
The open set $X_{\alpha,\beta}$ is then an open complex submanifold of $\mathcal{S}_{2n}$ whose complex structure is
defined by the subatlas $\mathcal{F}_{\alpha,\beta}$, and $X_{\alpha,\beta}$ is covered by
the following family of open sets
\begin{equation}\label{le recouvrement}
    \mathfrak{V}_{\alpha,\beta}: =  \bigg\{ W_{\gamma},\phantom{rrrr}\textrm{such that}\phantom{r}\left(W_{\gamma},\Phi_{\gamma}\right)\in
     \mathcal{F}_{\alpha,\beta}\bigg\}.
\end{equation}
By condition (\ref{conditions du recouvrement}), the complementary of $X_{\alpha,\beta}$ with respect to $\mathcal{S}_{2n}$ is
$$\mathcal{S}_{2n}-X_{\alpha,\beta} = \partial W_{\alpha}\cap \partial W_{\beta}$$
and then
\begin{equation}\label{(a)}
    X_{\alpha,\beta} = \mathcal{S}_{2n}- \mathcal{K}
\end{equation}
where the compact $\mathcal{K}$ is given by \phantom{r}
$$\mathcal{K} =  \partial W_{\alpha}\cap \partial W_{\beta} \approx \mathcal{S}_{2n-2}.$$
By writing $\alpha = (j,a,r)\in \mathcal{L}$ and $\beta = (k,b,s)\in \mathcal{L}$, we observe that
\begin{equation}\label{(b)}
    \mathcal{K} \subset U_{\jmath} \bigcap U_{k} \subset U_{\jmath}
\end{equation}
and that
\begin{equation}\label{(c)}
    \varphi_{\jmath}(\mathcal{K}) = \mathcal{S}_{2n-2}(b,r).
\end{equation}
Equations (\ref{(a)}) , (\ref{(b)}) , (\ref{(c)}) imply that the open complex manifold $X_{\alpha,\beta}$ is an
$\mathcal{A}-$submanifold of $\mathcal{S}_{2n}$
in the sense of definition \ref{A-sous variete de S_2n}.\\
\textbf{\underline{$2^{nd}-$step}.}$ $
 Our goal in this step, is to construct two \textbf{holomorphic} mappings
$$
F : X_{\alpha,\beta}-\overline{W_{\alpha}} \longrightarrow \mathbb{C}^{n}
$$
and
$$
G : X_{\alpha,\beta}-\overline{W_{\beta}} \longrightarrow \mathbb{C}^{n}
$$
such that
$$
G\big/\left(W_{\alpha} \cap W_{\beta}\right) - F\big/\left( W_{\alpha}\cap W_{\beta}\right) =  \Phi_{\beta} - \Phi_{\alpha}\phantom{rrrrrr} \textrm{on} \phantom{rr} W_{\alpha} \cap W_{\beta}.
$$
Indeed, let $\mathcal{O}^{n}$ be the sheaf of holomorphic mappings on $\mathcal{S}_{2n}$ with values in $\mathbb{C}^{n}$, and let $\mathcal{O}^{n}_{\alpha,\beta}$ denote the restriction of the sheaf $\mathcal{O}^{n}$ to the $\mathcal{A}-$submanifold $X_{\alpha,\beta}$, that is
$$
\mathcal{O}^{n}_{\alpha,\beta}: = \mathcal{O}^{n}\big/X_{\alpha,\beta}.
$$
Consider with respect to the open covering $\mathfrak{V}_{\alpha,\beta}$ of $X_{\alpha,\beta}$, the following groups of cochains with values in the sheaf
 $\mathcal{O}^{n}_{\alpha,\beta}$:
$$
C^{0}\bigg( \mathfrak{V}_{\alpha,\beta} , \mathcal{O}^{n}_{\alpha,\beta} \bigg) = \prod_{\gamma\in \mathcal{L}}\mathcal{O}^{n}_{\alpha,\beta}\big(W_{\gamma}  \big)
$$
$$
C^{1}\bigg(  \mathfrak{V}_{\alpha,\beta} , \mathcal{O}^{n}_{\alpha,\beta} \bigg)= \prod_{\left(\gamma,\mu\right)\in \mathcal{L}^{2}}
\mathcal{O}^{n}_{\alpha,\beta}\left(W_{\gamma}\cap W_{\mu} \right)
$$
$$
C^{2}\bigg( \mathfrak{V}_{\alpha,\beta} , \mathcal{O}^{n}_{\alpha,\beta} \bigg)= \prod_{\left(\gamma,\mu,\nu\right)\in \mathcal{L}^{3}}
\mathcal{O}^{n}_{\alpha,\beta}\left(W_{\gamma}\cap W_{\mu}\cap W_{\nu}\right)
$$
and let the coboundary operators:
$$
\delta: C^{0}\bigg( \mathfrak{V}_{\alpha,\beta} , \mathcal{O}^{n}_{\alpha,\beta} \bigg)  \longrightarrow C^{1}\bigg( \mathfrak{V}_{\alpha,\beta} , \mathcal{O}^{n}_{\alpha,\beta} \bigg)
$$
and
$$
\delta: C^{1}\bigg( \mathfrak{V}_{\alpha,\beta} , \mathcal{O}^{n}_{\alpha,\beta} \bigg)  \longrightarrow C^{1}\bigg( \mathfrak{V}_{\alpha,\beta} , \mathcal{O}^{n}_{\alpha,\beta} \bigg)
$$
defined by:\\
1) For the 0-cochains $\big(f_{\gamma}\big)_{(\gamma)\in \mathcal{L}} \in C^{0}\bigg( \mathfrak{V}_{\alpha,\beta} , \mathcal{O}^{n}_{\alpha,\beta} \bigg)$
$$\delta \bigg(\big(f_{\gamma}\big)_{(\gamma)\in \mathcal{L}} \bigg) = \big(g_{\gamma,\mu}\big)_{(\gamma,\mu)\in \mathcal{L}^{2}}$$ where
$$
g_{\gamma,\mu} = f_{\mu} - f_{\gamma} \phantom{rrrr} \textrm{on} \phantom{r}W_{\gamma}\cap W_{\mu}.
$$
2) For the 1-cochains $\big(f_{\gamma,\mu}\big)_{(\gamma,\mu)\in \mathcal{L}^{2}}\in C^{1}\bigg( \mathfrak{V}_{\alpha,\beta}, \mathcal{O}^{n}_{\alpha,\beta} \bigg)$ $$\delta \bigg(\big(f_{\gamma,\mu}\big)_{(\gamma,\mu)\in \mathcal{L}^{2}}\bigg)
 = \big(g_{\gamma,\mu,\nu}\big)_{(\gamma,\mu,\nu)\in \mathcal{L}^{3}} $$ where
$$
g_{\gamma,\mu,\nu} = f_{\mu,\nu}-f_{\gamma,\nu}+f_{\gamma,\mu}\phantom{rrrr} \textrm{on} \phantom{r}W_{\gamma}\cap W_{\mu}\cap W_{\nu}.
$$
Since the boundary operators are trivially group homomorphisms, we obtain then the group of 1-cycles
$$
\mathcal{Z}^{1}\bigg( \mathfrak{V}_{\alpha,\beta} , \mathcal{O}^{n}_{\alpha,\beta} \bigg): = ker\phantom{r} C^{1}\bigg(  \mathfrak{V}_{\alpha,\beta} , \mathcal{O}^{n}_{\alpha,\beta} \bigg) \stackrel{\delta}{\longrightarrow}  C^{2}\bigg(  \mathfrak{V}_{\alpha,\beta}, \mathcal{O}^{n}_{\alpha,\beta} \bigg)
$$
and the group of 1-coboundaries
$$
\mathcal{B}^{1}\bigg(  \mathfrak{V}_{\alpha,\beta}, \mathcal{O}^{n}_{\alpha,\beta} \bigg): = Im \phantom{r} C^{0}\bigg( \mathfrak{V}_{\alpha,\beta}, \mathcal{O}^{n}_{\alpha,\beta} \bigg) \stackrel{\delta}{\longrightarrow}  C^{1}\bigg( \mathfrak{V}_{\alpha,\beta}, \mathcal{O}^{n}_{\alpha,\beta} \bigg)
$$
and the cohomology group with coefficients in the sheaf $\mathcal{O}^{n}_{\alpha,\beta}$, with respect to the covering $\mathfrak{V}_{\alpha,\beta}$ is
$$
\mathcal{H}^{1}\bigg( \mathfrak{V}_{\alpha,\beta}, \mathcal{O}^{n}_{\alpha,\beta} \bigg): =
\mathcal{Z}^{1}\bigg(  \mathfrak{V}_{\alpha,\beta} , \mathcal{O}^{n}_{\alpha,\beta} \bigg)\bigg/
\mathcal{B}^{1}\bigg(  \mathfrak{V}_{\alpha,\beta}, \mathcal{O}^{n}_{\alpha,\beta} \bigg).
$$
Consider now on the $\mathcal{A}-$submanifold  $X_{\alpha,\beta}$ the following $1-$cochain $\big(f_{\gamma,\mu}\big)_{(\gamma,\mu)\in \mathcal{L}^{2}}$ 
 with $f_{\gamma,\mu} \in \mathcal{O}^{n}_{\alpha,\beta}\left(W_{\gamma}\cap W_{\mu} \right)$ defined by:
\begin{equation}\label{cochain}
\left\{
   \begin{split}
     f_{\alpha,\beta} & = \Phi_{\beta} - \Phi_{\alpha}  \\
      f_{\gamma,\mu} & = 0 \phantom{rrrr} \textrm{for all}\phantom{r} (\gamma,\mu)\neq(\alpha,\beta).
   \end{split}
   \right.
\end{equation}
 Since the homeomorphism $\Phi_{\alpha}$ and $\Phi_{\beta}$ are holomorphic by the hypothetic complex structure of the sphere $\mathcal{S}_{2n}$, 
 then the $1-$cochain $\big(f_{\gamma,\mu}\big)_{(\gamma,\mu)\in \mathcal{L}^{2}}$ takes its values in the sheaf $\mathcal{O}^{n}_{\alpha,\beta}$, 
 and is therefore well defined. \\We claim that the 1-chain $\big(f_{\gamma,\mu}\big)_{(\gamma,\mu)\in \mathcal{L}^{2}}$ is a 1-coboundary. 
 To prove this,  let's first check that $\big(f_{\gamma,\mu}\big)_{(\gamma,\mu)\in \mathcal{L}^{2}}$ is a 1-cocycle. Indeed, since by condition (\ref{conditions du recouvrement}), the covering $\mathfrak{V}_{\alpha,\beta}$  does not contain any domain $W_{\gamma}$ which intersects both $W_{\alpha}$ and $W_{\beta}$, then for every triplet of domains $W_{\gamma}, W_{\mu},  W_{\nu}$ of $X_{\alpha,\beta}$, satisfying
$$W_{\gamma} \cap W_{\mu}\cap  W_{\nu} \neq \phi$$
we have trivially  $$\delta \bigg(\big(f_{\gamma,\mu}\big)_{(\gamma,\mu)\in \mathcal{L}^{2}}\bigg) = 0$$
which means that the 1-chain $\big(f_{\gamma,\mu}\big)_{(\gamma,\mu)\in \mathcal{L}^{2}}$ is a 1-cocycle.\\
Furthermore, we know by the vanishing theorem \ref{theoreme d'annulation}, that:
$$\mathcal{H}^{0,1}\left(X_{\alpha,\beta}, \mathbb{C}^{n}\right) = \left\{  0 \right\},$$
which implies by Dolbeault's resolution of the sheaf $\mathcal{O}^{n}_{\alpha,\beta} $, that
$$\mathcal{H}^{1}\bigg( X_{\alpha,\beta}, \mathcal{O}^{n}_{\alpha,\beta} \bigg) = \left\{  0\right\},$$
 and then according to Leray's theorem \ref{Leray} (see \ref{Annexe} index), the cohomology group with respect to the covering $\mathfrak{V}_{\alpha,\beta}$ is trivial, that is
$$
\mathcal{H}^{1}\bigg( \mathfrak{V}_{\alpha,\beta}, \mathcal{O}^{n}_{\alpha,\beta} \bigg) = \left\{  0\right\}.
$$
Thus the 1-cochain $\big(f_{\gamma,\mu}\big)_{(\gamma,\mu)\in \mathcal{L}^{2}}$ is a 1-boundary.\\
This means that there exists over $X_{\alpha,\beta}$ a 0-cochain
$
\big(g_{\gamma}\big)_{\gamma\in \mathcal{L}}
$
with values in the sheaf $\mathcal{O}^{n}_{\alpha,\beta}$,  such that
$$
\delta \bigg( \big(g_{\gamma}\big)_{\gamma\in \mathcal{L}} \bigg) = \big(f_{\gamma,\mu}\big)_{(\gamma,\mu)\in \mathcal{L}^{2}}
$$
that is
$$
 g_{\mu} - g_{\nu} = f_{\mu,\nu} \phantom{rrrrrrrrr} \textrm{on} \phantom{rr} W_{\mu}\bigcap W_{\nu}.
$$
Observe from the definition of the 1-chain $\left(f_{\mu,\nu} \right)_{(\mu,\nu)\in \mathcal{L}^{2}}$, that the restrictions of the 0-chain $\big(g_{\gamma}\big)_{\gamma\in \mathcal{L}}$ to the open sets
$X_{\alpha,\beta} - \overline{W_{\alpha}}$ and $X_{\alpha,\beta} - \overline{W_{\beta}}$, satisfy the following compatibility conditions:
\begin{enumerate}
  \item On the open set $X_{\alpha,\beta}-\overline{ W_{\alpha}}$, we have
  $$
  g_{\mu} - g_{\nu} = 0 \phantom{rrrrrrrrr} \textrm{on} \phantom{rr} W_{\mu}\bigcap W_{\nu}
  $$
 \item On the open set $X_{\alpha,\beta} - \overline{W_{\beta}}$, we have
  $$
  g_{\mu} - g_{\nu} = 0 \phantom{rrrrrrrrr} \textrm{on} \phantom{rr} W_{\mu}\bigcap W_{\nu}.
  $$
  \end{enumerate}
Then there exist two holomorphic mappings with values in $\mathbb{C}^{n}$:
$$F: X_{\alpha,\beta}- \overline{W_{\alpha}} \longrightarrow \mathbb{C}^{n}$$
and
  $$G: X_{\alpha,\beta}- \overline{W_{\beta}} \longrightarrow \mathbb{C}^{n}$$
such that
  $$
  F/ W_{\mu} = g_{\mu}\phantom{rrrr} \textrm{on} \phantom{rrrrrr} W_{\mu}
  $$
  and
  $$
  G/ W_{\nu} = g_{\nu}\phantom{rrrr} \textrm{on} \phantom{rrrrrr} W_{\nu}
  $$
that is, for $\mu = \alpha$ and $\nu = \beta$
\begin{equation}\label{fonction - fonction}
\begin{split}
    F\big/\left(W_{\alpha}\cap W_{\beta}\right) -   G\big/\left(W_{\alpha}\cap W_{\beta}\right) & = g_{\alpha}/\left(W_{\alpha}\cap W_{\beta}\right) -
    g_{\beta}/\left(W_{\alpha}\cap W_{\beta}\right) \\
    & = f_{\alpha,\beta}/\left(W_{\alpha}\cap W_{\beta}\right) \\
    & = \Phi_{\alpha}\left(W_{\alpha}\cap W_{\beta}\right) - \Phi_{\beta}\left(W_{\alpha}\cap W_{\beta}\right).
    \end{split}
\end{equation}
\textbf{\underline{$3^{rd}$step.}} In this third step, we are going to deduce that for $n \geq 2$, $\left\{\mathcal{H}ath \right\}_{n}$ implies that the homeomorphisms $\Phi_{\alpha}$ defining the charts of the finer atlas $\mathcal{F}$ are constant.
Indeed, assume that for $n \geq 2$, proposition $\left\{ Hath\right\}_{n}$ is true, and
let the complex chart $\left(U_{\jmath} , \Phi_{\jmath}\right)$ of the hypothetic atlas $\mathcal{A}$, (recall that $U_{\jmath}$ is assumed to be homeomorphic to a ball of $\mathbb{C}^{n}$). Observe that the open set $\left(X_{\alpha,\beta} - \overline{W_{\alpha}}\right) \bigcap U_{i}$ is connected.  Since the restricted mapping
$$F:\left( X_{\alpha,\beta}- \overline{W_{\alpha}}\right) \bigcap U_{\jmath}\longrightarrow \mathbb{C}^{n}$$
is holomorphic, it follows then by lemma \ref{Hartogs local}, that $F/_{(X_{\alpha,\beta}- \overline{W_{\alpha}} )\bigcap U_{\jmath}}$ admits an analytic continuation to the whole domain $U_{\jmath}$.  Hence the holomorphic mapping $$F: X_{\alpha,\beta}- \overline{W_{\alpha}} \longrightarrow \mathbb{C}^{n}$$ admits an analytic continuation to the whole sphere $\mathcal{S}_{2n}$ (still denoted $F$)
  $$
  F : \mathcal{S}_{2n}  \longrightarrow  \mathbb{C}^{n}.
  $$
  But, since $\mathcal{S}_{2n}$ is compact, then $F$ must be constant on the sphere $\mathcal{S}_{2n}$. By a similar argument, we show that the holomorphic mapping $$G: X_{\alpha,\beta}- \overline{W_{\beta}} \longrightarrow \mathbb{C}^{n}$$
  admits an analytic continuation to the whole sphere $\mathcal{S}_{2n}$, and then becomes also constant on $\mathcal{S}_{2n}$. \\
Taking into account the fact that $F$ and $G$ are both constant mappings on $\mathcal{S}_{2n}$, we obtain then by differentiating (\ref{fonction - fonction}) on $\left(W_{\alpha}\cap W_{\beta}\right)$
\begin{equation}\label{global one form}
    \partial \Phi_{\alpha} = \partial \Phi_{\beta}  \phantom{rrrrrrrrr} \textrm{on} \phantom{rr} W_{\alpha}\cap W_{\beta}.
\end{equation}
Recall that until now, the indexes $\alpha $ and $\beta$ $\in \mathcal{L}$ are assumed to be fixed, but when we let $\alpha$ and $\beta$ running over the index set $\mathcal{L}$, the identity (\ref{global one form}) becomes then valid for all $\alpha,\beta \in \mathcal{L}$, that is
\begin{equation}\label{global one form 2}
    \partial \Phi_{\alpha} = \partial \Phi_{\beta}  \phantom{rrrrr} \textrm{n} \phantom{rr} W_{\alpha}\cap W_{\beta}.
\end{equation}
Hence, there exists by the compatibility conditions (\ref{global one form 2}), a global 1-differential vectorial form $\omega$  of type (1,0) on the whole sphere $\mathcal{S}_{2n}$, with holomorphic coefficients, such that
\begin{equation}\label{existence de omega}
    \omega/ W_{\gamma} = \partial \Phi_{\gamma} \phantom{rrrrrrrrr} \textrm{for all} \phantom{rr} \gamma \in \mathcal{L}.
\end{equation}
By differentiating (\ref{existence de omega}) we obtain trivially
$$\partial \omega = 0 \phantom{rrrrrrr} \textrm{on} \phantom{r} \mathcal{S}_{2n}.$$
Since by the vanishing theorem \ref{theoreme d'annulation}, the cohomology group $\mathcal{H}^{1,0}\left(X_{\alpha,\beta}, \mathbb{C}^{n} \right)$ is also trivial
$$\mathcal{H}^{1,0}\left(X_{\alpha,\beta}, \mathbb{C}^{n} \right) = \left\{ 0\right\}$$
there exists then a $\mathcal{C}^{\infty}$ mapping $H: X_{\alpha,\beta} \longrightarrow \mathbb{C}^{n}$ such that
\begin{equation}\label{omega est exact}
    \partial H = \omega.
\end{equation}
Observing that $\omega$ has holomorphic coefficients, and that $\partial H = \omega $, we deduce that $H : X_{\alpha,\beta} \longrightarrow \mathbb{C}^{n}$, is a harmonic mapping and that $H$ can be written in the form
$$
H = H_{1} +\overline{ H_{2}}
$$
where both mappings $$H_{1} , H_{2}: X_{\alpha,\beta} \longrightarrow \mathbb{C}^{n}$$ are both holomorphic on $X_{\alpha,\beta}$, and where $\overline{H_{2}}$ denotes the conjugate of $H_{2}$.\\Using once again the same arguments as for the holomorphic mappings $F$ and $G$ above, one can prove that both holomorphic mappings $H_{1}$ and $H_{2}$ which are already holomorphic on $X_{\alpha,\beta}$, admit analytic continuations to the whole sphere $\mathcal{S}_{2n}$. Both mappings $H_{1}$ and $H_{2}$ become then constant on $\mathcal{S}_{2n}$. According to equation (\ref{omega est exact}) we obtain $\omega = 0$, and thanks to equation (\ref{existence de omega}), we can finally conclude that:
$$
\textrm{for all} \phantom{r} \gamma \in \mathcal{L},\phantom{rrrrr}\Phi_{\gamma}\phantom{r} \textrm{is constant on} \phantom{r} W_{\gamma} .
$$
But this last conclusion contradicts the fact that $\left( W_{\gamma}, \Phi_{\gamma} \right)$ is a chart of a complex structure on the sphere $S_{2n}$.
Then the assumption that $\left\{Necs\right\}_{n}$ is false for $n \geq 2$, has lead to a contradiction. The implication $ \left\{Hath\right\}_{n}  \Longrightarrow \left\{Necs\right\}_{n}$  is then true for all $n  \geq 2$.
\end{proof}
\subsection{$\left\{Necs\right\}_{n}\Longrightarrow \left\{Hath\right\}_{n}$ }$ $\\
to prove the implication $\left\{ Necs \right\}_{n} \Longrightarrow  \left\{ Hath \right\}_{n}$, we need first to prove the following lemma.
\begin{lemma}\label{reciproque}$ $\\
Let the unit ball  $\mathcal{B}_{1}$ of $\mathbb{C}^{n}$, and let $\mathcal{K}$ be a compact subset of $\mathcal{B}_{1}$
such that $\mathcal{B}_{1}-\mathcal{K}$ is connected. Assume that $\left\{ Hath \right\}_{n}$ is false, that is, there exists a holomorphic
function $f: \mathcal{B}_{1}-  \mathcal{K} \longrightarrow \mathbb{C}$ which does not admit an analytic continuation to the ball $\mathcal{B}_{1}$.
Then there exist two balls $\mathcal{B}_{r_{1}}$ and $\mathcal{B}_{r_{2}}$ with
$$
\mathcal{K} \subset \mathcal{B}_{r_{1}} \subset \mathcal{B}_{r_{2}} \subset \mathcal{B}_{1}
$$
 and there exist two open convex sets $\mathcal{C}_{1}$ and
$\mathcal{C}_{2}$ with
$$
\mathcal{K} \subset  \mathcal{C}_{1}   \subset  \mathcal{C}_{2} \subset \mathcal{B}_{1}
$$
and there exists a
 bi-holomorphic mapping $$F: \mathcal{C}_{2}- \mathcal{C}_{1} \longrightarrow \mathcal{B}_{r_{2}} - \mathcal{B}_{r_{1}}$$
 which does not admit an analytic continuation to the open convex set $\mathcal{C}_{2}$.
\end{lemma}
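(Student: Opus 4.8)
The plan is to prove the statement directly by manufacturing, out of the non-extendable scalar function $f:\mathcal{B}_{1}-\mathcal{K}\longrightarrow\mathbb{C}$, a vector-valued \emph{biholomorphism} of shells whose failure to extend is forced by that of $f$. First I would fix the radii and the convex sets. Since $\mathcal{K}$ is compact in the open ball $\mathcal{B}_{1}$, the number $R=\sup_{z\in\mathcal{K}}\|z\|$ is $<1$; choosing $R<r_{1}<r_{2}<1$ gives balls with $\mathcal{K}\subset\mathcal{B}_{r_{1}}\subset\subset\mathcal{B}_{r_{2}}\subset\subset\mathcal{B}_{1}$, and on the closed shell $\overline{\mathcal{A}}=\overline{\mathcal{B}}_{r_{2}}-\mathcal{B}_{r_{1}}$, which lies inside $\mathcal{B}_{1}-\mathcal{K}$, the function $f$ is holomorphic, bounded, and has bounded derivatives.

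Next I would introduce a shear. For a small parameter $\varepsilon>0$ set $G(z)=(z_{1}+\varepsilon f(z),z_{2},\dots,z_{n})$ on the round shell $\mathcal{A}$. Cauchy estimates on $f$ make $G$ a holomorphic map whose Jacobian $1+\varepsilon\,\partial f/\partial z_{1}$ is nonvanishing and which is Lipschitz-injective for $\varepsilon$ small; hence $G$ is a biholomorphism of $\mathcal{A}$ onto its image. Because $G$ is $\mathcal{C}^{\infty}$-close to the identity, the hypersurfaces $G(\partial\mathcal{B}_{r_{i}})$ are small perturbations of round spheres, so the regions $\mathcal{C}_{i}$ they bound are strictly convex and satisfy $\mathcal{K}\subset\mathcal{C}_{1}\subset\mathcal{C}_{2}\subset\mathcal{B}_{1}$ once $\varepsilon$ is small. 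I then set $F:=G^{-1}$, a biholomorphism of the convex shell $\mathcal{C}_{2}-\mathcal{C}_{1}=G(\mathcal{A})$ onto the round shell $\mathcal{B}_{r_{2}}-\mathcal{B}_{r_{1}}$, which is exactly the map the lemma requires.

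Finally I would establish non-extendability by contradiction. Observe that $F_{j}(w)=w_{j}$ for $j\ge2$, so any holomorphic extension $\widetilde{F}$ of $F$ to $\mathcal{C}_{2}$ keeps these components and only its first component $\phi:=\widetilde{F}_{1}$ is new; moreover on the shell $\phi(w)-w_{1}=-\varepsilon f(F(w))$ is $O(\varepsilon)$. Since the hole is now filled, the maximum-modulus principle applied on $\overline{\mathcal{C}_{2}}$ forces $|\phi-w_{1}|=O(\varepsilon)$ throughout $\mathcal{C}_{2}$, and with Cauchy estimates this makes $\widetilde{F}$ $\mathcal{C}^{1}$-close to the identity, hence injective, with $\widetilde{F}(\partial\mathcal{C}_{2})=\partial\mathcal{B}_{r_{2}}$; thus $\widetilde{F}:\mathcal{C}_{2}\to\mathcal{B}_{r_{2}}$ is a biholomorphism extending $F$. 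Then $G=\widetilde{F}^{-1}$ extends holomorphically to all of $\mathcal{B}_{r_{2}}$, and from $f(z)=\tfrac1\varepsilon\bigl(G_{1}(z)-z_{1}\bigr)$ on $\mathcal{A}$ one obtains a holomorphic extension of $f$ across $\mathcal{K}$; by the identity theorem on the connected set $\mathcal{B}_{1}-\mathcal{K}$ this glues with $f$ to extend it to all of $\mathcal{B}_{1}$, contradicting the choice of $f$. The main obstacle is precisely this fill-in step — showing the extended shear remains a global biholomorphism onto $\mathcal{B}_{r_{2}}$ — which is exactly what the triangular shear structure and the maximum principle are arranged to deliver.
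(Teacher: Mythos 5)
Your construction is correct in its essentials and rests on the same key idea as the paper's proof: perturb the identity by a small shear $z \mapsto (z_{1}+\varepsilon f(z), z_{2},\dots,z_{n})$ in the first coordinate, use boundedness of $f$ and its derivatives on a compact shell avoiding $\mathcal{K}$ to make the map biholomorphic and close to the identity, and obtain the convex sets $\mathcal{C}_{1},\mathcal{C}_{2}$ as small perturbations of round balls. The one genuine difference is the orientation of the map. The paper defines the shear $F_{\lambda}$ on a round shell and takes $\mathcal{C}_{1},\mathcal{C}_{2}$ to be its sublevel sets $\{\|F_{\lambda}\|^{2}\leq r_{i}\}$ (convexity being checked via $\mathrm{Hess}(\|F_{\lambda}\|^{2})=\mathbb{I}_{2n}+\mathcal{R}$ with $\|\mathcal{R}\|<1$), so that the required map $F$ is $F_{\lambda}$ itself restricted to $\mathcal{C}_{2}-\mathcal{C}_{1}$; since its first component is literally $z_{1}+f(z)/\lambda$, non-extendability of $F$ to $\mathcal{C}_{2}$ follows at once from that of $f$ after the standard gluing across the connected shell. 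You instead take $\mathcal{C}_{i}$ to be the regions bounded by the images $G(\partial\mathcal{B}_{r_{i}})$ and set $F=G^{-1}$, which forces you to prove the hardest step of your argument: that a holomorphic extension $\widetilde{F}$ of $G^{-1}$ to $\mathcal{C}_{2}$ would be a biholomorphism onto $\mathcal{B}_{r_{2}}$, so that $G=\widetilde{F}^{-1}$, and hence $f=(G_{1}(z)-z_{1})/\varepsilon$, extends across $\mathcal{K}$. Your sketch of that step can be completed, but not quite as written: the Cauchy estimate giving $\mathcal{C}^{1}$-closeness to the identity degenerates near $\partial\mathcal{C}_{2}$, so global injectivity must be patched together from the interior estimate, the $\mathcal{C}^{0}$ bound supplied by the maximum principle, and the already-known injectivity of $F$ on the outer shell; the cleanest route is the argument-principle/degree count showing that every $w\in\mathcal{B}_{r_{2}}$ has exactly one preimage, using that $\widetilde{F}$ agrees near $\partial\mathcal{C}_{2}$ with the homeomorphism $F$ onto $\partial\mathcal{B}_{r_{2}}$. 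All of this extra work is bought back for free by the paper's orientation, so if you keep your setup you should either carry out the degree argument in full or simply exchange the roles of the round and convex shells.
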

\begin{proof}$ $\\
For $ 0 < r < r' < 1$, Let $\mathcal{B}_{r}$ and $\mathcal{B}_{r'}$ be two balls of $\mathbb{C}^{n}$ such that
$$\mathcal{K} \subset \mathcal{B}_{r} \subset \mathcal{B}_{r'} \subset \mathcal{B}_{1}$$ and
observe that the annulus $\mathcal{K}_{r,r'}:= \overline{\mathcal{B}}_{r'}- \mathcal{B}_{r}$ is a compact subset of $\mathcal{B}_{1}-\mathcal{K}$.\\
Let the holomorphic function $f: \mathcal{B}_{1}-\mathcal{K}\longrightarrow \mathbb{C}$ satisfying the hypothesis of the lemma, and for a positive
number $\lambda > 0$, let the mapping
$$
F_{\lambda}: \mathcal{B}_{r'} - \mathcal {B}_{r}\longrightarrow F_{\lambda}(\mathcal{B}_{r'} - \mathcal{B}_{r} )\subset\mathbb{C}^{n}
$$
defined for $z = \left(z_{1}, ..., z_{n}\right) \in  \mathcal{B}_{r'} - \mathcal {B}_{r}$ by:
\begin{equation}\label{Le bi holomorphisme F lambda}
F_{\lambda}(z) = \bigg(z_{1} + \frac{f\left(z_{1},z_{2},...,z_{n}\right)}{\lambda},  z_{2},..., z_{n} \bigg).
\end{equation}
It is clear that $F_{\lambda}$ is holomorphic in $\mathcal{B}_{r'}- \mathcal{B}_{r}$ and doesn't admit an analytic continuation to $\mathcal{B}_{r'}$, and
that $f$ and its derivatives $\displaystyle\frac{\partial f}{\partial z_{j}}$ and $\displaystyle\frac{\partial^{2}f}{\partial z_{j}\partial z_{k}}$ are
bounded on the compact annulus $\mathcal{K}_{r,r'}$. Let:
$$
    \theta = \sup_{z \in \mathcal{K}_{r,r'}}\left|f(z)\right|
$$
$$
    \theta_{j} = \sup_{z \in \mathcal{K}_{r,r'}}\left|\frac{\partial f}{\partial z_{j}}(z)\right|
$$
$$
    \theta_{j,k} = \sup_{z \in \mathcal{K}_{r,r'}}\left|\frac{\partial^{2} f}{\partial z_{j}\partial z_{k}}(z)\right|
$$
and
$$
\widetilde{\theta} = \max_{1 \leq j,k \leq n} \left\{\theta , \theta_{j}, \theta_{j,k}\right\}.
$$
The $ n \times n$-matrix
$$
A(z)
=
\left(
  \begin{array}{ccccc}
  \frac{\partial f}{\partial z_{1}} &   \frac{\partial f}{\partial z_{2}}&\frac{\partial f}{\partial z_{3}} & \cdots &  \frac{\partial f}{\partial z_{n}} \\
    0 & 0 & 0 & \cdots & 0 \\
    0 & 0 & \ddots & \cdots & 0 \\
    \vdots & \vdots &  & \ddots & \vdots \\
    0 & 0 & 0 & \cdots & 0 \\
  \end{array}
\right).
$$
is then bounded on the compact annulus $\mathcal{K}_{r,r'}$
$$
\| A(z)\| \leq \widetilde{\theta}.
$$
Now let the jacobian matrix of $F_{\lambda}(z)$
$$
Jac(F_{\lambda})(z) = \mathbb{I}_{n} + A(z).
$$
If we choose $\lambda > n.\widetilde{\theta}$, the jacobian $Jac(F_{\lambda})(z)$ is then invertible, and therefore the mapping
 $$
 F_{\lambda}: \mathcal{B}_{r'} -\mathcal{B}_{r}\longrightarrow F_{\lambda}\left(\mathcal{B}_{r'} - \mathcal{B}_{r}\right)\subset\mathbb{C}^{n}
$$
 is bi-holomorphic.\\
From the definition (\ref{Le bi holomorphisme F lambda}) of $F_{\lambda}$, we have for all $z \in \mathcal{B}_{r'} - \mathcal{B}_{r}$
$$
\lim_{\lambda \longrightarrow +\infty}F_{\lambda}(z) = z.
$$
This means that for all $r_{1}, r_{2} > 0$, with $ r < r_{1} < r_{2} < r'$, there exists a positive number $\lambda_{0} > 0$, such that
\begin{equation}\label{existence de r1 et r2}
    \lambda > \lambda_{0} \Longrightarrow \mathcal{B}_{r_{2}} - \mathcal{B}_{r_{1}} \subset F_{\lambda}\left( \mathcal{B}_{1}-\mathcal{K} \right).
\end{equation}
It follows from (\ref{existence de r1 et r2}), that for $\lambda > \lambda_{0}$, there exist two open subsets $\mathcal{C}_{1}$ and
$\mathcal{C}_{2}$, of $\mathcal{B}_{1}$ with
$$ \mathcal{K} \subset \mathcal{C}_{1} \subset \mathcal{C}_{2} \subset \mathcal{B}_{1} $$
such that:
 $$\partial\mathcal{C}_{1}:= \bigg\{z \in \mathbb{C}^{n}, \phantom{rrr} \| F_{\lambda}(z) \|^{2} = r_{1} \bigg\}$$
$$\partial\mathcal{C}_{2}:= \bigg\{z \in \mathbb{C}^{n}, \phantom{rrr} \|F_{\lambda}(z)  \|^{2} = r_{2} \bigg\}$$
and
$$ F_{\lambda}\left(  \mathcal{C}_{2} -  \mathcal{C}_{1} \right)  )  = \mathcal{B}_{r_{2}}   - \mathcal{B}_{r_{_{1}}}.$$
We claim that both open sets $\mathcal{C}_{1}$ and $\mathcal{C}_{2}$ are convex. \\
Indeed, consider the function
$
\rho_{\lambda}: \mathcal{B}_{1} - \mathbb{K} \longrightarrow \mathbb{R}_{+}
$
defined by
\begin{equation}\label{la fonction rho}
    \rho_{\lambda}(z):= \|  F_{\lambda}(z)\|^{2}.
\end{equation}
By differentiating of (\ref{la fonction rho}),
we obtain at a point $z$ near $\partial\mathcal{C}_{j}$, (with j = 1 or j= 2)
 \begin{equation}\label{hessienne}
    Hess\left(\rho_{\lambda}(z)\right) = \mathbb{I}_{2n} + \mathcal{R}(z) \phantom{rrrrrrrr} z \in \partial\mathcal{C}_{j}
 \end{equation}
where $\mathcal{R}(z)$ is a $2n \times 2n$ real matrix involving the first and second real derivatives of the holomorphic function $f$.
Since the derivatives of $f$ are bounded on the compact annulus $\overline{\mathcal{B}}_{r'} - \mathcal{B}_{r}$ by $\widetilde{\theta}$,
then by choosing $\lambda > max \left\{ \lambda_{0} , 2n\widetilde{\theta} \right\}$,
we obtain
$$ \| \mathcal{R}(z) \| < 1.$$
From (\ref{hessienne}), the hessian matrix $Hess\left(\rho_{\lambda}(z)\right)$ is then positive defined, which means that the open sets $\mathcal{C}_{1}$
 and $\mathcal{C}_{2}$ are convex. The proof of lemma \ref{reciproque} is then complete.
\end{proof}
We are now ready to prove the implication $\left\{Necs\right\}_{n}\Longrightarrow \left\{Hath\right\}_{n}$.
\begin{proof}$ $\\
Let $$\bigg\{e_{1}+ie_{2},...,e_{2n-1}+ie_{2n},e_{2n+1} \bigg\}$$ be an orthnormal basis of $\mathbb{C}^{n}\times \mathbb{R}$.
Without loss of generality, we can assume that $\mathcal{S}_{2n}$ is the unit sphere embedded in $\mathbb{C}^{n}\times \mathbb{R}$,
$S_{2n}\hookrightarrow \mathbb{C}^{n}\times \mathbb{R}$ with equation
$$\left| z_{1}\right|^{2} + ... + \left| z_{n}\right|^{2} + s^{2} = 1.$$
Let $N=(0,...,0,1)$ and $S=(0,...,0,-1)$ be respectively the north and the south poles of $\mathcal{S}_{2n}$,
and let the half spheres
$$ W_{+}: = \bigg\{\left(z_{1},...,z_{n},s\right)\in \mathcal{S}_{2n}, \phantom{rrr}  0 < s \leq 1  \bigg\}$$
and
$$ W_{-}: = \bigg\{ \left(z_{1},...,z_{n},s \right)\in \mathcal{S}_{2n}, \phantom{rrr}  -1 \leq  s <0  \bigg\}$$
and for $ 0 < \sigma  < 1 $, let the following open set of $S_{2n}$
$$ W_{\sigma}: = \bigg\{ \left(z_{1},...,z_{n},s\right)\in S_{2n}, \phantom{rrrr} -\sigma < s < 0  \bigg\}.$$
Recall that the stereographic projection of pole $N$, is the homeomorphism
$$
pr_{N}: S_{2n}-\left\{ N\right\} \longrightarrow \mathbb{C}^{n}
$$
which sends every point $P\in S_{2n}- \left\{N \right\}$ to the point $pr_{N}(P)\in \mathbb{C}^{n} $, which is
the intersection of the line $(N,P)$ with the hyperplane $s=0$. The stereographic projection of a point
$P=\left(z_{1},...,z_{n},s\right) \in S_{2n}-\left\{N \right\}$ is given by
$$
pr_{N}(P) = \frac{1}{1-s}.\bigg\{P - s.e_{2n+1} \bigg\}.
$$
Now assume by contradiction that proposition $\{Hath\}_{n}$ is false.
We claim that the sphere $S_{2n}$ admits a complex atlas $\mathcal{A}$ formed of exactly two complex charts $\left(U_{1},\varphi_{1}\right)$ and
$\left(U_{2},\varphi_{2}\right)$.
Indeed, Since $$pr_{N}(W_{-}) = \mathcal{B}_{1}$$  we can define the first chart $\left(U_{1}, \varphi_{1}\right) \in \mathcal{A}$,
by setting
$$
U_{1}:= W_{-} =  \bigg\{\left(z_{1},...,z_{n},\zeta\right)\in S_{2n}, \phantom{rrr}  -1 \leq \zeta < 0  \bigg\}
$$
and by choosing the homeomorphism $\varphi_{1}$ to be the stereographic projection of pole $N$, restricted to $W_{-}$, that is
$$\varphi _{1}:= pr_{N}: W_{-} \longrightarrow \mathcal{B}_{1}.$$
Observe that to define the first chart, we haven't need to use the assumption that $\left\{Hath\right\}_{n}$ is false,  but
for the construction of the second chart $\left( U_{2}, \varphi_{2}  \right)$, this assumption will play a crucial role.
Indeed, by the assumption that $\left\{Hath\right\}_{n}$ is false, there exist according to lemma \ref{reciproque} a compact subset
$\mathcal{K} \subset \mathcal{B}_{1}$ such that $\mathcal{B}_{1} - \mathcal{K}$ is connected, and there exist two convex sets $\mathcal{C}_{1}$ and
$\mathcal{C}_{2}$ and two balls $\mathcal{B}_{r_{1}}$ and $\mathcal{B}_{r_{2}}$ satisfying the conditions:
$$\mathcal{K} \subset \mathcal{C}_{1}\subseteq \mathcal{C}_{2} \subset \mathcal{B}_{1}$$
and
$$ \mathcal{K} \subset \mathcal{B}_{r_{1}}\subseteq \mathcal{B}_{r_{2}} \subset \mathcal{B}_{1} $$
and there exists a bi-holomorphic mapping
$$
F: \mathcal{C}_{2}- \mathcal{C}_{1}\longrightarrow \mathcal{B}_{r_{2}} - \mathcal{B}_{r_{1}}\subset\mathbb{C}^{n}
$$
which does not admit an analytic continuation to $\mathcal{C}_{2}$.
\begin{remark}
by replacing if necessarily, the mapping $F$ by $\frac{1}{r_{2}}.F$, we can assume that $r_{2}=1$, which means that
the bi-homomorphic mapping $F$ can be defined as follows
$$
F: \mathcal{C}_{2}- \overline{\mathcal{C}_{1}}\longrightarrow \mathcal{B}_{1} - \overline{\mathcal{B}}_{r_{1}}.
$$
\end{remark}
Now choose the domain of the second chart to be $U_{2}:= \overline{W_{+}} \bigcup W_{\sigma}$, and observe that
$$
U_{1}\bigcap U_{2} = W_{\sigma}.
$$
The homeomorphism of the second chart will be defined by a mapping
$$
\varphi_{2}: U_{2}=\overline{ W_{+}}\bigcup W_{\sigma} \longrightarrow \overline{\mathcal{C}_{1}}\bigcup \left(\mathcal{C}_{2}-
 \overline{\mathcal{C}}_{1} \right)
$$
where the restrictions
\begin{equation}\label{premier}
    \varphi_{2}\big/ W_{\sigma}: W_{\sigma}   \longrightarrow \mathcal{C}_{2}-\overline{\mathcal{C}_{1}}
\end{equation}
and
\begin{equation}\label{deuxieme}
     \varphi_{2}\big/\overline{W_{+}}: \overline{W_{+}} \longrightarrow \overline{\mathcal{C}_{1}}
\end{equation}
are homeomorphisms. \\To define the restriction $\varphi_{2}\big/ W_{\sigma}$, that is (\ref{premier}), observe first that if we choose
$$
   \sigma = \frac{1-r_{1}}{r_{1}}
$$
then an elementary calculation, shows that
$$
\varphi_{1}\left( W_{\sigma} \right)  = pr_{N}\left( W_{\sigma} \right) = \mathcal{B}_{1}-  \overline{\mathcal{B}}_{r_{1}}.
$$
Thus, for $P\in W_{\sigma}$, we set:
\begin{equation}\label{premier final}
    \varphi_{2}\left( P \right): = F^{-1}\left( \varphi_{1}(P)\right) \in \mathcal{C}_{2}-\overline{\mathcal{C}_{1}}.
\end{equation}
To define the second restriction $\varphi_{2}\big/ \overline{W_{+}}$, that is (\ref{deuxieme}), we need
to use the fact that $\mathcal{C}_{1}$ is convex, and that
the half sphere $W_{+}$ is geodesically convex.\\
Indeed,\\
(1) Since $W_{+}$ is geodesically convex, there exists a mapping
  $$
 \Gamma: \left[0,\frac{\pi}{2}\right] \times \partial W_{+} \longrightarrow \overline{W_{+}}
  $$
  $$
  \phantom{rrrrrrr} (t ,\xi) \longmapsto \Gamma(t,\xi)
  $$ where
  $\Gamma(t,\xi)$ is the geodesic curve starting from the north pole $N$ at $t= 0$ and arriving to the point $\xi \in \partial W_{+}$
  at $t = \frac{\pi}{2}$, that is $\Gamma(0,\xi) = N$, and $\Gamma(\frac{\pi}{2},\xi) = \xi \in \partial W_{+},$ and then
  For every point $P\in \overline{U_{+}}$,
   there exists a unique pairing
  $$(t,\xi) \in \left[0,\frac{\pi}{2}\right] \times \partial W_{+}$$ such that the point $P$ can be written
\begin{equation}\label{ecriture geodesique}
    P = \Gamma(t,\xi).
\end{equation}
The parameter $t \in \left[0,\frac{\pi}{2} \right]$ represents the geodesic distance from $P$ to $N$.\\
(2) Fix a point $ M \in \mathcal{C}_{1}$. By convexity of $\mathcal{C}_{1}$,
every point $\eta \in\partial \mathcal{C}_{1}$ can be joined to the fixed point $M$ by a linear segment $\left[M,\eta\right]$, and then we can define
the mapping
$$ \Lambda:\left[0,\frac{\pi}{2}\right] \times \partial \mathcal{C}_{1} \longrightarrow \overline{\mathcal{C}_{1}}
  $$
  $$
  \phantom{rrrrrrrrrrrr} (t ,\eta) \longmapsto \Lambda(t,\eta):= \frac{2}{\pi}\left( \frac{\pi}{2}- t \right).M + t.\eta$$
$\Lambda(t,\eta)$ is the linear curve starting from the fixed point $M$ at $t = 0$, and arriving to the point $\eta \in \partial\mathcal{C}_{1}$
 at $t = \frac{\pi}{2}$.
For every point $Q\in \overline{\mathcal{C}_{1}}$,
   there exists then a unique pairing
  $$(t,\eta) \in \left[0,\frac{\pi}{2}\right] \times \partial \mathcal{C}_{1}$$ such that the point $Q$ can be written
\begin{equation}\label{ecriture geodesique}
    Q = \Lambda(t,\xi).
\end{equation}
The parameter $t \in \left[0,\frac{\pi}{2} \right]$ represents the Euclidean distance from $Q$ to the fixed point $M$.\\
The observations (1) and (2) enable us to define the second restriction $\varphi_{2}\big/W_{+}$
for every point $P = \Gamma(t,\xi)
\in \overline{U}_{+}$ by:
\begin{equation}\label{deuxieme final}
    \left(\varphi_{2}\big/W_{+}\right)(P) = \left(\varphi_{2}\big/W_{+}\right)\left( \Gamma(\xi,t) \right)=
    \Lambda\left(t, F^{-1}\left(\varphi_{1}(\xi) \right)\right)\in \overline{\mathcal{C}_{1}}.
\end{equation}
In other words, $\varphi_{2}\big/W_{+}$ sends the extremity  $\xi \in \partial W_{+}$ to the extremity
$\eta= F^{-1}\left(\varphi_{1}(\xi)\right)\in \partial\mathcal{C}_{1}$
and sends the point $P = \Gamma(\xi,t)$ of the geodesic arc joining the north pole $N$ to $\xi\in \partial U_{+}$
which is at the geodesic distance
$t$ from $N$ to the point $Q = \Lambda(t,F^{-1}\left(\varphi_{1}(\xi)\right))$ of the segment
 $\left[M,F^{-1}\left(\varphi_{1}(\xi)\right)\right]$ which is at the Euclidean distance $t$ from the fixed point $M \in \mathcal{C}_{1}$.\\
Taking into account the restrictions $\varphi_{2}\big/W_{\sigma}$ and $\varphi_{2}\big/W_{+}$ , we can now define the global homeomorphism
$$
\varphi_{2}: U_{2}=\overline{ W_{+}}\bigcup W_{\sigma} \longrightarrow \overline{\mathcal{C}_{1}}\bigcup \left(\mathcal{C}_{2}-
\overline{\mathcal{C}}_{1} \right)
$$
by the following expression
\begin{equation}\label{psi}
         \varphi_{2}(P): = \left\{\begin{split}
F^{-1}\left( \varphi_{1}(P)\right)& \phantom{rrrrrr} \textrm{if} \phantom{r} P \in W_{\sigma}\\
 \Lambda\left(t,F^{-1}\left(\varphi_{1}(\xi)\right)\right)  & \phantom{rrrrrr} \textrm{if} \phantom{r} P = \Gamma(t,\xi) \in \overline{W_{+}}.
    \end{split}
    \right.
\end{equation}
\begin{remark}$ $\\
\begin{enumerate}
\item Since by construction, the restrictions $\varphi_{2}\big/W_{\sigma}$ and $\varphi_{2}\big/W_{+}$ are homeomorphisms, and
for all $P\in \partial W_{+}\bigcap \partial W_{\sigma}$, we have
$$\left(\varphi_{2}\big/W_{\sigma}\right)(P)=\left(\varphi_{2}\big/W_{+}\right)(P)$$ then $\varphi_{2}$ is a homeomorphism.
\item It follows from the definition of $\varphi_{1}$ and $\varphi_{2}$, that the change of charts in the domain $U_{1}\bigcap U_{2} = W_{\sigma}$,
is the bi-holomorphic mapping
$$ \varphi_{2}\circ \varphi_{1}^{-1} = F^{-1}: \mathcal{B}_{1}-\mathcal{B}_{r_{1}} \longrightarrow \mathcal{C}_{2}-\mathcal{C}_{1}.
$$
 \end{enumerate}
\end{remark}
Then $
\mathcal{A}:= \bigg \{ \left( U_{1}, \varphi_{1} \right), \left(U_{2}, \varphi_{2} \right)    \bigg\}$ is a complex analytic atlas on the sphere $S_{2n}$.
 Hence the sphere $S_{2n}$ admits a complex structure,
 which means that $\left\{ Necs \right\}_{n}$ is false. The implication $ \left\{ Necs \right\}_{n} \Longrightarrow \left\{ Hath\right\}_{n}$ is then true
 and the proof of theorem \ref{equiv} is then complete.
 \end{proof}
 \begin{corollary}\textbf{(Theorem of complex spheres)}
 Among all spheres $S_{2n}$, only the sphere $S_{2}$ admits a complex structure.
 \end{corollary}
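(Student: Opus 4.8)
The plan is to read the corollary off directly from the main equivalence Theorem \ref{equiv}, once the classical truth value of Hartogs' theorem is fed back into the logical scheme. Recall that Theorem \ref{equiv} establishes $\left\{Hath\right\}_{n} \Longleftrightarrow \left\{Necs\right\}_{n}$ for every $n \geq 1$ purely as an equivalence of logical propositions, without committing in advance to whether either side is true or false. The corollary is then obtained by supplying, for each $n$, the now-known truth value of the analytic proposition $\left\{Hath\right\}_{n}$ and transporting it across the equivalence to the geometric proposition $\left\{Necs\right\}_{n}$.

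First I would treat the case $n \geq 2$. Here the proposition $\left\{Hath\right\}_{n}$ (Proposition \ref{sphere sphere}) is precisely the statement for balls of the classical Hartogs extension theorem (Theorem \ref{hartogs}), which holds for all $n \geq 2$ and is therefore a \emph{true} proposition. Applying the implication $\left\{Hath\right\}_{n} \Longrightarrow \left\{Necs\right\}_{n}$ furnished by Theorem \ref{equiv}, I conclude that $\left\{Necs\right\}_{n}$ is true, i.e. the sphere $S_{2n}$ admits no complex structure whenever $n \geq 2$. In particular, taking $n = 3$ settles the sphere $S_{6}$, the original motivating case.

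Next I would dispose of the single remaining case $n = 1$. The sphere $S_{2}$ is the Riemann sphere, a complex manifold of complex dimension one; hence $\left\{Necs\right\}_{1}$ is false and $S_{2}$ does admit a complex structure. This is consistent with the equivalence, since for $n = 1$ both $\left\{Hath\right\}_{1}$ and $\left\{Necs\right\}_{1}$ are false, as recorded in Remark \ref{remarque apres equiv}. Combining the two cases, $S_{2}$ is the unique even-dimensional sphere carrying a complex structure, which is exactly the assertion of the corollary.

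I do not expect any genuine obstacle at this final stage: all the substantive difficulty has already been absorbed into Theorem \ref{equiv}, whose proof rests on the vanishing of the Dolbeault group of bidegree $(0,1)$ for $\mathcal{A}$-submanifolds (Corollary \ref{theoreme d'annulation}) together with the \v{C}ech-cohomological argument of Section 4. The only point that requires attention is the logical bookkeeping emphasized throughout: during the proof of the equivalence, $\left\{Hath\right\}_{n}$ must be manipulated as a bare logical proposition of unassigned truth value, and only afterwards may its established classical truth be invoked to extract $\left\{Necs\right\}_{n}$. The corollary is precisely this concluding substitution.
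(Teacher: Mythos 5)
Your proposal is correct and follows exactly the same route as the paper, which proves this corollary in one line by combining Theorem \ref{equiv} with the classical truth of Hartogs' theorem for $n \geq 2$ (and, implicitly, the fact recorded in Remark \ref{remarque apres equiv} that $S_{2}$ is a Riemann surface for the case $n=1$). Your version merely spells out the logical bookkeeping that the paper leaves implicit, so there is nothing further to add.
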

 \begin{proof}
 This is a corollary of the main theorem \ref{equiv} and Hartogs' theorem.
 \end{proof}
\section{\textbf{Appendix}}\label{Annexe}
 We summarize below the main tools of pseudoconvexity, and $\check{C}$ech cohomology we need to prove the main theorem \ref{equiv}.
 For pseudoconvexity, see H\"{o}rmander \cite{12}, and for $\check{C}$ech cohomology, see O.forster \cite{7}.
\subsection{Pseudoconvex manifolds}
\subsubsection{Plurisubharmonic functions}$ $  \\
Recall that a function $f: \Omega \subset \mathbb{C}^{n}\longrightarrow \left[-\infty, +\infty\right[$ is said to be plurisubharmonic
in the open set $\Omega$ (we note $f \in Psh(\Omega)$), if $f$ is upper semicontinuous and satisfies the following mean value inequality
\begin{equation}\label{plurisubharmonic}
    f\left( z_{0} \right) \leq \frac{1}{2\pi}\int_{0}^{2\pi}f\left(z_{0}+ e^{i \theta}\xi   \right)d\theta
\end{equation}
for every $\xi\in \mathbb{C}^{n}$ such that $z_{0}+\xi \overline{D} \in \Omega$, where $D$ is the unit disc of $\mathbb{C}$.\\
The following properties will be very useful.\\
1) If $f$ is convex on $\Omega$, then $ f \in Psh(\Omega)$. Indeed, convexity implies continuity, and the inequality of convexity
$$
f\left( z_{0} \right) \leq \frac{1}{2}\left\{\{ f\left(z_{0} + \xi e^{i \theta} \right) +  f\left(z_{0} - \xi e^{i \theta} \right)\right\}
$$
implies inequality of plurisubharmonicity (\ref{plurisubharmonic}). \\
2) Let $F: \Omega_{1}\subset \mathbb{C}^{n}\longrightarrow \Omega_{2}\subset \mathbb{C}^{n}$ be a bi-holomorphic mapping, then
$$ g \in Psh(\Omega_{2}) \Longleftrightarrow g \circ F \in Psh(\Omega_{1}). $$
3) Let $X$ be a complex manifold,
 a function $f : X \longrightarrow \left[-\infty , +\infty\right[$ is said to be plurisubharmonic on $X$ if for
 every complex chart $(U,\varphi)$, the function $$ f \circ \varphi^{-1}:\varphi (U)\longrightarrow \left[-\infty  , + \infty\right[$$ is plurisubharmonic.
\begin{remark}
By property 2) above, the notion of plurisubharmonicity on a complex manifold is independent of the choice of the chart $(U,\varphi)$.
\end{remark}
\subsubsection{Pseudoconvex manifols} $ $ Let $X$ be a complex manifold, and let $f : X \longrightarrow \mathbb{R}$ be a smooth function. $f$ is said to
 be an exhaustion on $X$ is for every $c \in \mathbb{R}$, the sublevel set $X_{c} = f^{-1}(c)$ is relatively compact in $X$.\\
 The complex manifold $X$ is said to be strongly pseudoconvex if there exists a smooth strictly psh function $f$ exhaustion on $X$. We write in this case
 $$
 X = \bigcup_{c \in \mathbb{R}}f^{-1}(c).
 $$
 \begin{theorem}\label{Cartan theorem}\textbf{(Theorem B of Cartan.)}
 If $X$ is a strongly pseudoconvex manifold, then for every $ q\geq 1$, $\mathcal{H}^{0,q}(X, \mathbb{C}^{n}) = \left\{ 0 \right\}$.
 \end{theorem}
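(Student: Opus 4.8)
The plan is to deduce the vanishing by solving the $\overline{\partial}$-equation on $X$ through H\"ormander's weighted $L^{2}$ method, the strict plurisubharmonicity of the exhaustion being precisely the positivity that closes the estimates. Since the coefficients lie in the trivial bundle $\mathbb{C}^{n}$, the assertion $\mathcal{H}^{0,q}(X,\mathbb{C}^{n}) = \{0\}$ decouples into $n$ copies of the scalar statement $\mathcal{H}^{0,q}(X,\mathbb{C}) = \{0\}$, obtained by applying the scalar result to each component; so it suffices to treat scalar $(0,q)$-forms with $q \geq 1$, i.e. to prove that every smooth $\overline{\partial}$-closed $(0,q)$-form is $\overline{\partial}$-exact.

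First I would fix the smooth strictly \textbf{psh} exhaustion $f : X \longrightarrow \mathbb{R}$ furnished by the hypothesis and, for a convex increasing $\chi$, form the weight $\varphi = \chi \circ f$; working in the Hilbert spaces $L^{2}_{(0,q)}(X, e^{-\varphi})$ one sets up the densely defined closed operator $\overline{\partial}$ and its Hilbert-space adjoint $\overline{\partial}^{\,*}$. The heart of the matter is the a priori estimate coming from the Bochner--Kodaira--Morrey--Kohn identity: for $u$ in the domain of both operators,
$$\|\overline{\partial} u\|_{\varphi}^{2} + \|\overline{\partial}^{\,*} u\|_{\varphi}^{2} \;\geq\; \int_{X} \sum_{j,k} \frac{\partial^{2}\varphi}{\partial z_{j}\,\partial \overline{z}_{k}}\, u_{\overline{k}}\,\overline{u_{\overline{j}}}\; e^{-\varphi}\, dV,$$
whose right-hand side involves the complex Hessian of $\varphi$. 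Because $f$ is strictly \textbf{psh}, this Hessian is positive definite, and by letting $\chi$ grow fast enough the right-hand term dominates a multiple of $\|u\|_{\varphi}^{2}$, yielding the coercive estimate
$$c\,\|u\|_{\varphi}^{2} \;\leq\; \|\overline{\partial} u\|_{\varphi}^{2} + \|\overline{\partial}^{\,*} u\|_{\varphi}^{2}$$
with $c$ positive (and $\geq 1$ on each sublevel set).

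Granted the estimate, the standard functional-analytic step (a closed densely defined operator whose adjoint satisfies such a lower bound has closed range and is surjective onto the orthogonal complement of its cokernel) shows that every $\overline{\partial}$-closed $g \in L^{2}_{(0,q)}(X, e^{-\varphi})$ with $q \geq 1$ admits $u$ with $\overline{\partial} u = g$. To return to genuine Dolbeault cohomology I would exhaust $X$ by the relatively compact sublevel sets $X_{c} = \{\,x \in X : f(x) \leq c\,\}$, solve on each $X_{c}$, pass to the limit $c \to +\infty$ using the completeness provided by the exhaustion to control the solutions, and finally invoke interior elliptic regularity for the elliptic operator $\overline{\partial}\,\overline{\partial}^{\,*} + \overline{\partial}^{\,*}\overline{\partial}$ to upgrade the $L^{2}$ solution to a smooth one. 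This gives exactness of the Dolbeault complex in bidegree $(0,q)$ for each $q \geq 1$, hence $\mathcal{H}^{0,q}(X,\mathbb{C}) = \{0\}$ and, componentwise, $\mathcal{H}^{0,q}(X,\mathbb{C}^{n}) = \{0\}$.

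The main obstacle is the a priori estimate together with the accompanying density step: one must verify that smooth compactly supported forms are dense in the graph norm of $(\overline{\partial}, \overline{\partial}^{\,*})$, so that the integration by parts underlying the Bochner--Kodaira identity is legitimate, and one must tune the convex weight $\chi$ so that the Hessian term genuinely dominates despite the non-compactness of $X$ and the possible degeneration of the metric near infinity. A shorter alternative would be to invoke Grauert's solution of the Levi problem, concluding that a manifold carrying a smooth strictly \textbf{psh} exhaustion is Stein, and then to cite the classical Theorem B for Stein manifolds together with the Dolbeault isomorphism; but since the statement at hand \emph{is} Theorem B, I prefer the self-contained $L^{2}$ argument above, which exhibits the role of strict plurisubharmonicity explicitly.
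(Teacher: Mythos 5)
The paper does not prove this statement at all: it appears in the appendix as a recalled classical tool, with only a pointer to H\"ormander's book, so there is no internal proof to compare yours against. Your $L^{2}$ sketch is precisely the standard argument from that source and is correct in outline: the componentwise reduction to scalar $(0,q)$-forms is harmless, the weighted Bochner--Kodaira--Morrey--Kohn estimate with weight $\chi\circ f$ is the right engine, and the three technical points you isolate --- choosing a complete Hermitian (preferably K\"ahler, e.g.\ built from $i\partial\overline{\partial}f$) metric so that compactly supported forms are dense in the graph norm, tuning $\chi'$ to dominate the curvature and torsion terms of the base metric, and elliptic regularity to pass from $L^{2}$ to smooth solutions --- are exactly the ones the cited reference handles. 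The alternative you mention (Grauert's solution of the Levi problem to conclude $X$ is Stein, then Cartan's Theorem B plus the Dolbeault isomorphism) is logically shorter but, as you note, circular in spirit here; your self-contained route is the appropriate one and contains no genuine gap beyond the standard technicalities you have already named.
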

\subsection{$\check{\textbf{C}}$ech cohomology}
We recall below the necessary tools from $\check{C}$ech cohomology, we have also to use in the proof of the main theorem \ref{equiv}. \\
Let $X$ be a topological space, and let $\mathfrak{V}= \left\{V_{j} \right\}_{j \in J}$ be a covering of $X$, and let $\mathcal{G}$ be a sheaf of abelian groups on $X$. We define the $q^{th}$ group of cochains of $\mathcal{G}$ with respect to the covering $\mathfrak{V}$, by:
$$
C^{q}(\mathfrak{V}, \mathcal{G}): = \prod_{(j_{0},...,j_{q})\in J^{q+1}}\mathcal{G}(V_{j_{0}}\cap ...\cap V_{j_{q}}).
$$
A $q-$cochain is then a family
$$ \left(f_{j_{0},...,j_{q}}\right)_{(j_{0},...,j_{q})\in J^{q+1}} \phantom{rrr} \textrm{such that}
\phantom{r}  f_{j_{0},...,j_{q}} \in \mathcal{G}\left( V_{j_{0}} \cap ... \cap V_{j_{q}}\right) $$ for all $(j_{0},...,j_{q})\in J^{q+1}$.\\
We define the coboundary operators
$$ \delta : C^{0}(\mathfrak{V}, \mathcal{G})  \longrightarrow  C^{1}(\mathfrak{V}, \mathcal{G})$$
and
$$ \delta : C^{1}(\mathfrak{V}, \mathcal{G})  \longrightarrow  C^{2}(\mathfrak{V}, \mathcal{G})$$
as follows:\\
1) For $(f_{i})_{i\in J} \in  C^{0}(\mathfrak{V}, \mathcal{G})$, we define $ \delta \big((f_{i})_{i\in J}\big) = (g_{i,j})_{(i,j)\in J^{2}} \in  C^{1}(\mathfrak{V}, \mathcal{G}) $ by
$$ g_{i,j} = f_{j} - f_{i} \phantom{r} \in \mathcal{G}(V _{i} \cap V_{j}).$$
2) For $(f_{i,j}\big)_{(i,j)\in J^{2}}\in C^{1}\bigg( \mathfrak{V}, \mathcal{G}\bigg)$, we define $\delta((f_{i,j}\big)_{(i,j)\in J^{2}})
 = (g_{i,j,k}\big)_{(i,j,k)\in J^{3}} $ by
$$
g_{i,j,k} = f_{j,k}-f_{i,k}+f_{i,j}\phantom{r} \in \mathcal{G}(V_{i}\cap V_{j}\cap V_{k}).
$$
Since the boundary operators are group homomorphisms, we are lead then to consider the group of 1-cycles
$$
\mathcal{Z}^{1}\bigg( \mathfrak{V} , \mathcal{G} \bigg): = ker\phantom{r} C^{1}\bigg(  \mathfrak{V}, \mathcal{G} \bigg) \stackrel{\delta}{\longrightarrow}  C^{2}\bigg(  \mathfrak{V}, \mathcal{G} \bigg)
$$
and the group of 1-coboundaries
$$
\mathcal{B}^{1}\bigg(  \mathfrak{V}, \mathcal{G} \bigg): = Im \phantom{r} C^{0}\bigg( \mathfrak{V}, \mathcal{G} \bigg) \stackrel{\delta}{\longrightarrow}  C^{1}\bigg( \mathfrak{V}, \mathcal{G} \bigg)
$$
and to define with respect to the covering $\mathfrak{V}$, the cohomology group with coefficients in the sheaf $\mathcal{G}$, that is the following quotient group
$$
\mathcal{H}^{1}\bigg( \mathfrak{V}, \mathcal{G}\bigg): =
\mathcal{Z}^{1}\bigg(  \mathfrak{V}, \mathcal{G} \bigg)\bigg/
\mathcal{B}^{1}\bigg(  \mathfrak{V}, \mathcal{G} \bigg).
$$
We need about the cohomology group $\mathcal{H}^{1}\bigg( \mathfrak{V}, \mathcal{G}\bigg)$  with coefficients in the sheaf $\mathcal{G}$, to recall the following two fundamental facts.\\
1) The cohomology group $\mathcal{H}^{1}\bigg( \mathfrak{V}, \mathcal{G}\bigg)$ depends obviously of the covering $\mathfrak{V}$, but we can define the cohomology group $\mathcal{H}^{1}\bigg( X, \mathcal{G} \bigg)$
of the topological space $X$ as the inductive limit of the groups $\mathcal{H}^{1}\bigg( \mathfrak{V}, \mathcal{G} \bigg)$ over all finer coverings $\mathfrak{V}$ of $X$, that is
$$
\mathcal{H}^{1}\bigg(X,\mathcal{G}\bigg): = \lim_{\mathfrak{V}}\mathcal{H}^{1}\bigg( \mathfrak{V}, \mathcal{G} \bigg).
$$
2) To compute the cohomology group $\mathcal{H}^{1}\bigg( X, \mathcal{G}\bigg)$ from the group $\mathcal{H}^{1}\bigg( \mathfrak{V}, \mathcal{G}\bigg)$,
we use the following powerful theorem.
\begin{theorem}\label{Leray}\textbf{(Leray)}
Let $X$ be a topological space, and let $\mathcal{G}$ be a sheaf over $X$, and let $\mathfrak{V} = \left\{V_{j} \right\}_{j \in J}$ be a covering of $X$ such that for all $j \in J$, $\mathcal{H}^{q}(V_{j},\mathcal{G}) = \left\{ 0\right\}$. Then
$$
\mathcal{H}^{q}(X,\mathcal{G}) = \mathcal{H}^{q}(\mathfrak{V},\mathcal{G}).
$$
\end{theorem}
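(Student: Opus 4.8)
The plan is to prove Leray's theorem by realising both $\mathcal{H}^q(\mathfrak{V},\mathcal{G})$ and $\mathcal{H}^q(X,\mathcal{G})$ as the cohomology of a single double complex, and then comparing the two iterated cohomologies. Before starting I would read the hypothesis in its correct (Leray) form: the covering $\mathfrak{V}$ should be \emph{acyclic}, that is, not only each $V_j$ but every finite intersection $V_{j_0}\cap\cdots\cap V_{j_p}$ satisfies $\mathcal{H}^k\big(V_{j_0}\cap\cdots\cap V_{j_p},\mathcal{G}\big)=\{0\}$ for all $k\geq 1$. For the application made in the proof of Theorem \ref{equiv} only the case $q=1$ is used, and there the weaker condition $\mathcal{H}^1(V_j,\mathcal{G})=\{0\}$ as literally stated already suffices, since $\mathcal{H}^1(\mathfrak{V},\mathcal{G})\hookrightarrow\mathcal{H}^1(X,\mathcal{G})$ always, and the extra vanishing forces surjectivity.

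First I would fix a flasque resolution $0\to\mathcal{G}\to\mathcal{C}^0\xrightarrow{d}\mathcal{C}^1\xrightarrow{d}\cdots$ of the sheaf $\mathcal{G}$, for instance the Godement canonical resolution. By the definition of sheaf cohomology through flasque (acyclic) resolutions, for \emph{every} open set $U\subseteq X$ one has $\mathcal{H}^k(U,\mathcal{G})=H^k\big(\Gamma(U,\mathcal{C}^\bullet)\big)$; in particular $\mathcal{H}^q(X,\mathcal{G})=H^q\big(\Gamma(X,\mathcal{C}^\bullet)\big)$. I would then introduce the first-quadrant double complex
\[
K^{p,k}:=C^p\big(\mathfrak{V},\mathcal{C}^k\big)=\prod_{(j_0,\dots,j_p)}\mathcal{C}^k\big(V_{j_0}\cap\cdots\cap V_{j_p}\big),
\]
carrying the \v{C}ech coboundary $\delta$ in the $p$-direction and the resolution differential $d$ in the $k$-direction, and study the cohomology of its total complex $\mathrm{Tot}\,K$.

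The heart of the argument is to compute $H^\bullet(\mathrm{Tot}\,K)$ in the two possible orders. Taking $d$-cohomology first, for fixed $p$ the complex $K^{p,\bullet}$ is the product over $(p+1)$-tuples of the complexes $\Gamma(V_{j_0\cdots j_p},\mathcal{C}^\bullet)$, whose $k$-th cohomology is $\prod\mathcal{H}^k(V_{j_0\cdots j_p},\mathcal{G})$; by the acyclicity hypothesis this vanishes for $k\geq 1$ and equals $C^p(\mathfrak{V},\mathcal{G})$ for $k=0$. Hence the first iterate collapses to the \v{C}ech complex $C^\bullet(\mathfrak{V},\mathcal{G})$, and a further $\delta$-cohomology gives $H^q(\mathrm{Tot}\,K)\cong\mathcal{H}^q(\mathfrak{V},\mathcal{G})$. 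Taking $\delta$-cohomology first, for fixed $k$ the row $K^{\bullet,k}=C^\bullet(\mathfrak{V},\mathcal{C}^k)$ is the \v{C}ech complex of the flasque sheaf $\mathcal{C}^k$; invoking the lemma that a flasque sheaf has no higher \v{C}ech cohomology, namely $\mathcal{H}^p(\mathfrak{V},\mathcal{C}^k)=\{0\}$ for $p\geq 1$ and $\mathcal{H}^0(\mathfrak{V},\mathcal{C}^k)=\Gamma(X,\mathcal{C}^k)$, the first iterate collapses to $\Gamma(X,\mathcal{C}^\bullet)$ in the column $p=0$, and a further $d$-cohomology gives $H^q(\mathrm{Tot}\,K)\cong\mathcal{H}^q(X,\mathcal{G})$. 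Comparing the two computations yields the desired isomorphism $\mathcal{H}^q(\mathfrak{V},\mathcal{G})\cong\mathcal{H}^q(X,\mathcal{G})$. Since $K$ is a first-quadrant double complex, both filtration spectral sequences converge without subtlety, so the two iterated cohomologies genuinely compute the same groups.

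The step I expect to be the main obstacle is the lemma used in the second computation: that a flasque sheaf $\mathcal{F}$ satisfies $\mathcal{H}^p(\mathfrak{V},\mathcal{F})=\{0\}$ for all $p\geq 1$ and all coverings $\mathfrak{V}$. This cannot be deduced from the Leray comparison itself, as that would be circular, so I would prove it directly: after well-ordering the index set $J$, I would construct an explicit contracting homotopy on the augmented \v{C}ech complex $0\to\Gamma(X,\mathcal{F})\to C^0(\mathfrak{V},\mathcal{F})\to C^1(\mathfrak{V},\mathcal{F})\to\cdots$, where the homotopy operator is defined componentwise by restricting a cochain and then extending it back using the flasqueness of $\mathcal{F}$ (every section over an open set extends to a global section). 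Verifying that this operator is a genuine chain homotopy, so that the augmented complex is exact, is the one place where the flasque hypothesis is essential and where the bookkeeping of restriction maps and the well-ordering must be carried out carefully.
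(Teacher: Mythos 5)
The paper offers no proof of this statement to compare yours against: Theorem \ref{Leray} is quoted in the appendix as a known tool, with Forster \cite{7} as the stated reference for \v{C}ech cohomology. Judged on its own, your double-complex argument is the standard Cartan--Leray proof and is essentially sound, \emph{provided} one adopts the corrected hypothesis you substitute, namely acyclicity of all finite intersections $V_{j_0}\cap\cdots\cap V_{j_p}$. You are right that this repair is necessary --- as literally stated, with vanishing only on the individual $V_{j}$ and for a single degree $q$, the theorem fails for $q\geq 2$ --- and right that it is harmless for the paper, which invokes the theorem only for $q=1$ (to pass from $\mathcal{H}^{1}\left(X_{\alpha,\beta},\mathcal{O}^{n}_{\alpha,\beta}\right)=\{0\}$ to $\mathcal{H}^{1}\left(\mathfrak{V}_{\alpha,\beta},\mathcal{O}^{n}_{\alpha,\beta}\right)=\{0\}$); in that degree the hypothesis $\mathcal{H}^{1}(V_{j},\mathcal{G})=\{0\}$ suffices, which is exactly the $q=1$ theorem proved in Forster's book by the elementary argument you mention (injectivity of $\mathcal{H}^{1}(\mathfrak{V},\mathcal{G})\longrightarrow\mathcal{H}^{1}(X,\mathcal{G})$ always, surjectivity forced by the vanishing). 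One small caveat: the paper defines $\mathcal{H}^{1}(X,\mathcal{G})$ as the inductive limit of \v{C}ech groups over refinements, while your double complex computes derived-functor cohomology via the Godement resolution; for $q=1$, and on the paracompact spaces occurring in this paper, the two agree, but you should either say so or run the comparison inside \v{C}ech theory as Forster does.

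The one step I would not let stand as written is your proof sketch of the flasque-vanishing lemma. The statement you need --- $\mathcal{H}^{p}(\mathfrak{V},\mathcal{F})=\{0\}$ for $p\geq 1$, $\mathcal{F}$ flasque, $\mathfrak{V}$ arbitrary --- is true, but the ``restrict, then extend by flasqueness'' operator you propose is not a chain homotopy: flasque extensions are non-canonical choices, so the extended sections are not compatible with the restriction maps, and the identity $\delta h+h\delta=\mathrm{id}$ breaks in the cross terms (one gets errors that merely vanish on the intersections with the distinguished $V_{j^{\ast}}$). The correct repairs are either a transfinite induction that solves $\delta g=f$ step by step along a well-ordering of $J$, extending partial solutions by flasqueness and gluing at limit stages, or, cleaner, the sheaf-level argument: the sheafified \v{C}ech complex
$0\to\mathcal{F}\to\prod_{j_{0}}\left(i_{j_{0}}\right)_{*}\bigl(\mathcal{F}\big/V_{j_{0}}\bigr)\to\prod_{j_{0},j_{1}}\left(i_{j_{0}j_{1}}\right)_{*}\bigl(\mathcal{F}\big/V_{j_{0}}\cap V_{j_{1}}\bigr)\to\cdots$
is exact, because stalkwise at a point $x$ one may fix an index $j$ with $x\in V_{j}$ and there the naive homotopy $h(f)_{j_{0}\ldots j_{p-1}}=f_{j\,j_{0}\ldots j_{p-1}}$ \emph{does} work; all its terms are flasque, so splitting it into short exact sequences with flasque kernels shows it remains exact after applying $\Gamma(X,\cdot)$, which is precisely the vanishing $\mathcal{H}^{p}(\mathfrak{V},\mathcal{F})=\{0\}$ you need. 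With that substitution your proof is complete.
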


\end{document}